
\documentclass
{siamltex}
\usepackage{amsfonts}
\usepackage{amsmath}
\usepackage{amssymb}
\usepackage{array}
\usepackage{stmaryrd}
\usepackage{graphicx}
\usepackage{color}

\usepackage{mathabx} 

\usepackage{bbm}

\usepackage{graphicx}
\usepackage{ifpdf}
\usepackage{tikz}


\newtheorem{remark}[theorem]{\it Remark}

\numberwithin{equation}{section}

    \usepackage{color}  
     \definecolor{red}{rgb}{0.9,0,0}
     \definecolor{green}{rgb}{0,0.6,0}
     \definecolor{rb}{rgb}{0.6,0,0.2}     
     \definecolor{blue}{rgb}{0,0,0}
\renewcommand{\preccurlyeq}{\lesssim}
\renewcommand{\sim}{\simeq}
\renewcommand{\epsilon}{\varepsilon}
\newcommand{\dt}{\noindent\dotfill}


\newcommand{\pt}{\partial}
\newcommand {\eps} {\varepsilon}

\newcommand{\R}{\mathbb{R}}


\newcommand{\btau}{\boldsymbol{\tau}}
\newcommand{\btauz}{\btau_{\!z}}
\newcommand{\btauT}{\btau_{\!T}}
\newcommand{\btauS}{\btau_{\!S}}
\newcommand{\bmu}{\boldsymbol{\mu}}
\newcommand{\bnu}{{\boldsymbol{\nu}}}
\newcommand{\Tf}{{\mathcal T}^{\! f}}
\newcommand {\beq} {\begin{equation}}
\newcommand {\eeq} {\end{equation}}
\newcommand {\beqa} {\begin{eqnarray}}
\newcommand {\eeqa} {\end{eqnarray}}
\newcommand {\beqann} {\begin{eqnarray*}}
\newcommand {\eeqann} {\end{eqnarray*}}

\title{Fully computable a posteriori error estimator using anisotropic flux equilibration\\ on
anisotropic meshes%
\thanks{{The author was partially supported by  Science Foundation Ireland grant SFI/12/IA/1683}
}}

\author{Natalia Kopteva\thanks{Department of Mathematics and
        Statistics, University of Limerick, Limerick, Ireland
        ({\tt natalia.kopteva@ul.ie}).%
 }}

\date{}
\begin{document}

\maketitle

\begin{abstract}
Fully computable  a posteriori error estimates in the energy norm are given
for singularly perturbed semilinear reaction-diffusion equations
posed in polygonal domains.
Linear finite elements are considered
on anisotropic triangulations.
To deal with the latter, we employ anisotropic quadrature and explicit anisotropic flux reconstruction.
Prior to the flux equilibration, divergence-free corrections are introduced for pairs of anisotropic triangles sharing a short edge.
We also give an upper bound for the resulting estimator, in which
the error constants are
independent  of the diameters and the aspect ratios of mesh elements,
and of the small perturbation parameter.
\end{abstract}


\begin{keywords}
 a posteriori error estimate, anisotropic triangulation,  anisotropic flux equilibration, flux reconstruction, anisotropic quadrature,
       energy norm, singular perturbation,  reaction-diffusion.
\end{keywords}

\begin{AMS}
65N15, 65N30. 
\end{AMS}

\pagestyle{myheadings} \thispagestyle{plain}



\section{Introduction}
We consider linear finite element approximations to
singularly perturbed semilinear reaction-diffusion equations of the form
\begin{align}
Lu:= -\epsilon^2 \triangle u + f(x,y;u) = 0\quad\mbox{for}\;\;(x,y)\in \Omega, \qquad u=0\quad \mbox{on}\;\; \partial \Omega,
\label{eq1-1}
\end{align}
posed in a,
possibly non-Lipschitz,
polygonal domain $\Omega\subset\mathbb{R}^2$.
Here $0<\epsilon \le 1$. We also assume that
$f$ is continuous {on $\Omega \times \mathbb{R}$ and satisfies $f(\cdot; s) \in L_\infty(\Omega)$ for all $s \in \mathbb{R}$},
and
the one-sided Lipschitz condition $f(x,y; u)-f(x,y;v) \ge C_f [u-v]$ whenever $u\ge v$,
with some constant $C_f \ge 0$.
Then there is a unique 
$u\in W_\ell^2(\Omega)\subseteq W_q^1{\color{blue}(\Omega)}\subset C(\bar\Omega)$ for some  $\ell>1$ and $q>2$
\cite[Lemma~1]{DK14}.
We additionally assume that $C_f+\varepsilon^2\ge 1$
(as a division by $C_f+\varepsilon^2$ immediately reduces \eqref{eq1-1} to this case).

Our goal is to give explicitly and fully computable a posteriori error estimates
on reasonably general anisotropic meshes (such as on Fig.\,\ref{fig_gen_mesh} and Fig.\,\ref{fig_node_types})
in the energy norm
defined by
\vspace{-0.2cm}
$$
\vvvert v \vvvert_{\eps\,;{\Omega}}:=
\Bigl\{\eps^2\|\nabla v \|^2_{{\Omega}}
+C_f\|v \|^2_{{\Omega}} \Bigr\}^{1/2},
$$
where $\color{blue}\|\cdot\|_{\mathcal D}:=\|\cdot\|_{L_2({\mathcal D})}\;\forall {\mathcal D}\subseteq\Omega$.
This goal is achieved by
a certain combination of explicit flux reconstruction and  flux equilibration.

Flux equilibration for equations of type \eqref{eq1-1} was considered in \cite{AinsBab_1999,AinsVej_NM2011,AinsVej_2014,Ched_Vohr_2009}
on shape-regular meshes
(see also
\cite[Chap.\,6]{AinsOd_2000} for the case $\eps=1$),
and in \cite{Grosman_2006} on anisotropic meshes.
The estimators in \cite{AinsVej_NM2011,AinsVej_2014,Ched_Vohr_2009} are based on flux reconstructions,
while \cite{AinsBab_1999,Grosman_2006} employ solutions of certain local problems.

Our approach in this paper differs from the previous work in a few ways.

\begin{itemize}

\item The fluxes are equilibrated within a local patch using anisotropic weights depending on the local, possibly anisotropic, mesh geometry
(see \eqref{beta_choice_case1}).

\item	Prior to the flux equilibration, divergence-free corrections are introduced for pairs of anisotropic triangles sharing a short edge
(see \S\ref{sec_A1_star}, in particular, \eqref{btauS}).

\item	A certain anisotropic quadrature is used on anisotropic elements (see \S\ref{sec_Q}).
This is motivated by some observations made in \cite{Kopt_mc14}, and
also enables us to drop some mesh assumptions  made in
recent papers \cite{Kopt15,Kopt_NM_17}.

\item	Our estimator is explicitly and fully computable in the sense that it involves no unknown error constants
(unlike other estimators on anisotropic meshes, such as in \cite{Grosman_2006,Kopt15,Kopt_NM_17}).

\item In contrast to \cite{Grosman_2006}, an upper bound  for our estimator 
involves no matching functions (which we discuss below).
In fact, the error constant $C$ in the upper bound \eqref{upper_bound} is
independent  not only of the diameters and the aspect ratios of mesh elements,
but also of the small perturbation parameter $\eps$.

\item\color{blue}
Unlike \cite{AinsBab_1999,AinsVej_NM2011,AinsVej_2014,Ched_Vohr_2009}, and also \cite{Ver98c,Kunert2000,KunVer00,Kun01},
we consider the semilinear case, which mostly simplifies the presentation
(as $f$ may include a few linear terms).

\item
\color{blue}
By contrast, dealing with anisotropic elements requires some non-incemental changes in the flux construction and also a more intricate analysis compared to the isotropic-mesh case.

\item
\color{blue}
The efficiency of error estimators on anisotropic meshes was addressed in \cite{Kunert2000,KunVer00,Kun01} using the standard bubble function approach.
However,  a numerical example will be given in \S\ref{sec_kunert} that clearly demonstrates that  short-edge jump residual terms in such bounds are not sharp.
So, under additional restrictions on the anisotropic mesh, we shall give a new bound for the short-edge jump residual terms, and thus show
that at least for some  anisotropic meshes the error estimator constructed in the paper is efficient.

\end{itemize}

The robustness of our estimator, denoted by $\mathcal E$, with respect to the mesh aspect ratios, as well as the small perturbation parameter $\eps$, is demonstrated  by the following upper bound
 (which follows from  Theorems~\ref{theo_flux} and~\ref{theo_main_bounds}):
 \begin{align}
\notag
{\mathcal E}\le
C\,\Bigl\{\sum_{z\in\mathcal N}
{\color{blue}\min\{1,\,\eps h_z^{-1}\}} \,
\bigl\|
\eps J_z\bigr\|^2_{\omega_z}&{}+\sum_{z\in\mathcal N}
\bigl\| \min\{1,\,h_z\eps^{-1}\}\,f_h^I\bigr\|^2_{\omega_z}+\bigl\|f_h-f_h^I\bigr\|^2_{\Omega}
\Bigr.
 \\\label{upper_bound}
 \Bigr.
 +\sum_{T\in\mathcal T} \bigl\|\lambda_T\,{\rm osc}(f_h^I;T)\bigr\|^2_{T}
&{}+\!\!\sum_{z\in{\mathcal N}^*_{\pt\Omega}}\!\bigl\|\lambda_T f_h(z)\bigr\|^2_{\omega_z}\Big\}^{1/2},
\end{align}
where $C$ is 
{independent   of the diameters and the aspect ratios} of elements in the triangulation $\mathcal T$,
and of $\eps$.
Here
$\mathcal N$ is the set of nodes 
in $\mathcal T$,
and $\omega_z$ is the patch of elements surrounding any $z\in\mathcal N$, while
$J_z$ is the maximum within $\omega_z$ of the standard jump in the normal derivative of the computed solution $u_h$ across an element edge,
$f_h=f(\cdot;u_h)$ and $f_h^I$ is its standard piecewise-linear Lagrange interpolant.
We also use $\lambda_T=\min\{1,\,H_T\eps^{-1}\}$,
$H_T\sim {\rm diam}(T)$, $h_T\sim H_T^{-1}|T|$,
and $h_z\sim |\omega_z| / {\rm diam}(\omega_z)$
{\color{blue}(and some notation defined in the final paragraph of this section).}
The boundary subset ${\mathcal N}^*_{\pt\Omega}$ of $\mathcal N$ is defined in \eqref{N_boundary}.

To relate \eqref{upper_bound}  to interpolation error bounds (as well as to possible adaptive-mesh construction strategies), note that
$|J_z|$ may be interpreted as approximating the diameter of $\omega_z$ under the  metric
induced by the squared Hessian matrix of the exact solution
(while $f_h^I$ approximates $\eps^2\triangle u$).
Note also that the right-hand side in \eqref{upper_bound} is similar to the estimator
in the recent paper \cite{Kopt_NM_17}, and reduces, in the case of shape-regular meshes, to
a version of the estimator given by
\cite{Ver98c}.

Explicit residual-type a posteriori error estimates for problems of type \eqref{eq1-1}
were also given in \cite{Ver98c,DK14} on shape-regular meshes,
 \cite{Siebert96,Kopt08,ChKopt} on anisotropic tensor-product meshes,
and  \cite{Kunert2000,KunVer00,Kun01,Kopt15,Kopt_NM_17,Kopt_bail16} on more general anisotropic meshes
(for $\eps=1$ in \cite{Siebert96,Kunert2000}).
All these estimates are not fully guaranteed in the sense that they involve unknown error constants.
(The cited papers deal with the energy norm,
except for \cite{ChKopt,DK14,Kopt08,Kopt15} addressing the maximum norm.)

Note that the error constants in the  estimators of \cite{Kunert2000,Kun01,KunVer00} (as well as the upper bound for the estimator \cite{Grosman_2006}
that we already mentioned) involve the so-called matching functions. The latter depend on the unknown error
and take moderate values only when the grid is either isotropic, or, being anisotropic, is aligned correctly to the solution,
while, in general, they may be as large as mesh aspect ratios.
The presence of such matching functions in the estimator is clearly undesirable, and is entirely avoided in recent papers \cite{Kopt15,Kopt_NM_17,Kopt_bail16},
as well as in our upper bound \eqref{upper_bound}.

Finally, note that a posteriori error estimation
on anisotropic meshes presents a more serious challenge not only compared to the shape-regular-mesh case, but also to the a priori error estimation.
Indeed, there is a vast number of papers showing that \mbox{a-priori}-chosen anisotropic meshes
offer an efficient way of computing
reliable numerical approximations 
of solutions that exhibit sharp boundary and interior layers.
In the context of singularly perturbed differential equations, such as \eqref{eq1-1} with $\eps\ll 1$,
see, for example,
\cite{ClGrEOR_mc05,Kopt_mc07,Kopt_EOR,RStTob} and references therein.

The paper is organized as follows.
In \S\ref{sec_nodes}, we list all triangulation assumptions. 
Next, \S\ref{sec_Q} describes the considered finite element discretization with quadrature.
The structure of the reconstructed flux and the main results are presented in \S\ref{sec_main_res}.
The case $h_z\lesssim \eps$ is addressed in \S\S\ref{sec_A1}--\ref{sec_A1_star}, while \S\ref{sec_A2} deals with the case
$h_z\gtrsim \eps$.
{\color{blue}The efficiency of the constructed estimator is illustrated by some numerical results in \S\ref{ssec_numer}.
We conclude the paper by discussing lower error bounds on anisotropic meshes in \S\ref{sec_kunert}.}
\smallskip

{\it Notation.}
We write
 $a\sim b$ when $a \lesssim b$ and $a \gtrsim b$,
 $a = {\mathcal O}(b)$ when $|a|\lesssim b$,
 and
$a \lesssim b$ when $a \le Cb$ with a generic constant $C$ depending on $\Omega$ and
$f$,
but 
%
 $C$
 does not depend
 on either $\eps$ or
 the diameters and the aspect ratios of elements in~$\mathcal T$.
 Also, we write $a \ll b$ when $a<c_0b$
 with a fixed 
small constant $c_0$ (used to distinguish between anisotropic and isotropic elements).
The indicator function $\mathbbm{1}_{A}$ takes value $1$ if condition $A$ is satisfied, and vanishes otherwise.
For any $\mathcal{D}\subset\Omega $,
we let $\|\cdot\|_{\mathcal{D}}=\|\cdot\|_{L_2(\mathcal{D})}$,
and $\color{blue}{\rm osc}(v;\mathcal{D})=\sup_{\mathcal{D}}v-\inf_{\mathcal{D}}v\;\forall v\in L_\infty(\mathcal{D})$, while
$\bnu$ and $\bmu$, possibly subscripted, denote the unit vectors on $\pt \mathcal{D}$ in the outward normal and counterclockwise tangential direction, respectively.
For any triangles $T$ and $T'$ sharing an edge, a standard notation is used:
  $$
  \bigl[\btau\cdot \bnu\bigr]_{\pt T\cap\pt T'}:=\btau\cdot \bnu\bigr|_{T}+\btau\cdot \bnu\bigr|_{T'}\,,
  \qquad
  \bigl[\pt_\bnu u_h\bigr]_{\pt T\cap\pt T'}:=\bigl[\nabla u_h\cdot \bnu\bigr]_{\pt T\cap\pt T'}\,.
  $$

%
\section{Triangulation assumptions}\label{sec_nodes}
We shall use $z$, 
$S$ and $T$ to respectively denote particular mesh nodes, edges and {\color{blue}triangular} elements,
while $\mathcal N$, $\mathcal S$ and $\mathcal T$ will respectively denote their sets.
For each $T\in\mathcal T$,
let
$H_T$ be the maximum edge length 
and $h_T:= 2 H_T^{-1}|T|$ be the minimum {\color{blue}altitude} in $T$.
For each $z\in\mathcal N$, let
$\omega_z$ be the patch of elements surrounding any $z\in\mathcal N$,
${\mathcal S}_z$ the set of edges originating at $z$,
and
\beq\label{ring_gamma}
H_z:={\rm diam}(\omega_z),\quad h_z:=\max_{T\subset\omega_z}h_T,
\quad
\mathring{h}_z:=\min_{T\subset\omega_z}h_T,
\quad
\gamma_z:={\mathcal S}_z\setminus\pt\Omega.\vspace{-0.1cm}
\eeq
{\color{blue}(With slight abuse of notation, such as  in the latter formula, we occasionally treat subsets of $\mathcal N$, $\mathcal S$ and $\mathcal T$
as sets of points.)}


\begin{figure}[!b]
%
~\hfill
\includegraphics[width=0.5\textwidth]{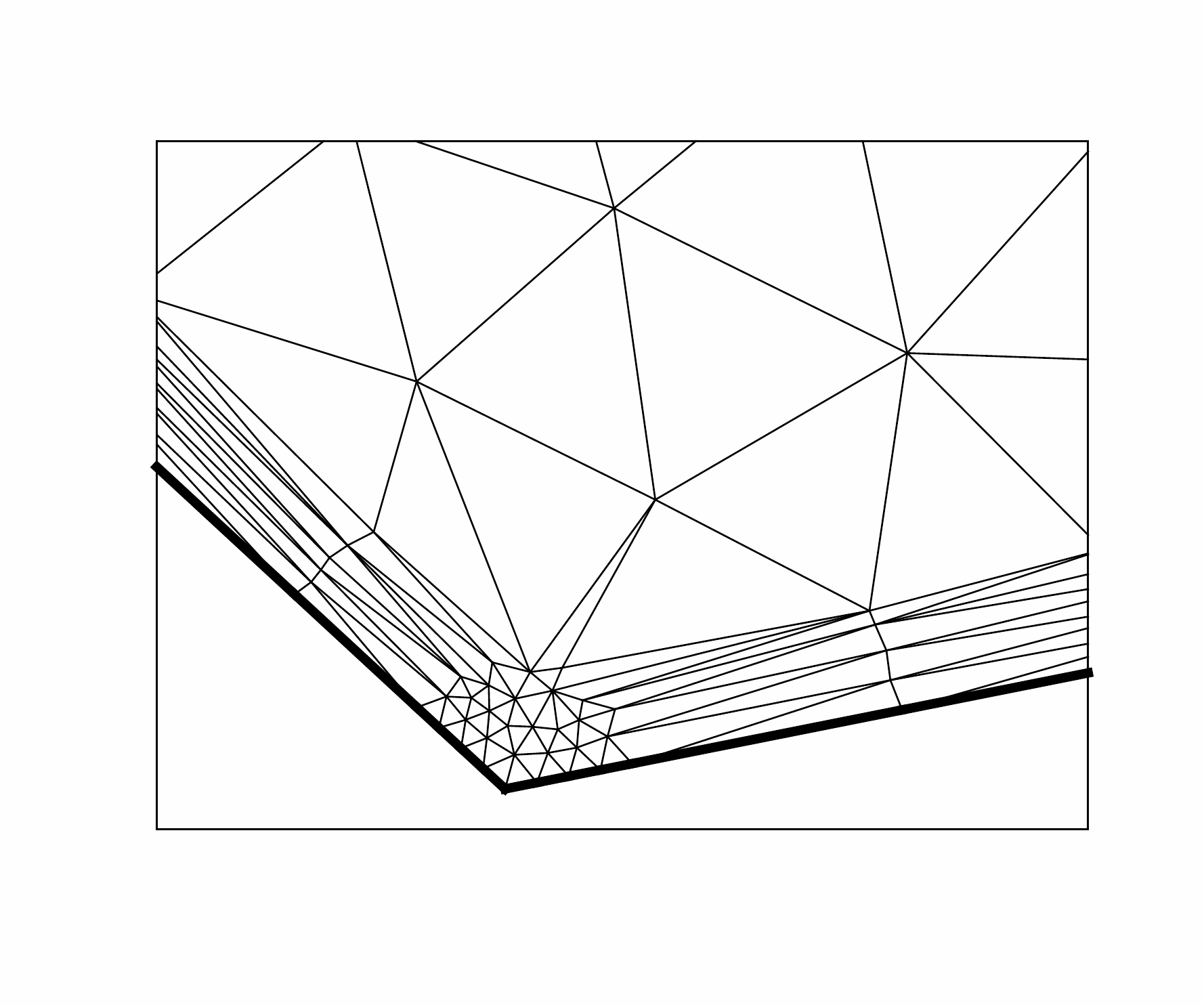}
\hfill~
\vspace{-0.9cm}
\caption{Example of a mesh  that satisfies
all assumptions made in \S\ref{sec_nodes}
(including ${\mathcal A}1$ and ${\mathcal A}2$).%
}
\label{fig_gen_mesh}
\end{figure}

Throughout the paper we make some {triangulation assumptions}.
All of them are automatically satisfied by
shape-regular triangulations.
\smallskip
\begin{itemize}

\item
{\it Maximum Angle condition.} Let the maximum interior angle in any triangle $T\in\mathcal T$
be uniformly bounded by some positive $\lambda_0<\pi$.
\smallskip

\item
Let the number of triangles containing any node be uniformly bounded.

\smallskip

\item
For any $z\in\mathcal N$, one has
\beq\label{mixed_node}
h_T\sim \mathring{h}_z\;\;\mbox{and}\;\;H_T\sim H_z\qquad\mbox{or}\qquad h_T\sim H_T \qquad\forall \ T\subset\omega_z.
\eeq
\end{itemize}

\begin{figure}[t!]
~\hfill
\begin{tikzpicture}[scale=0.22]
\draw[fill] (9.5,7) circle [radius=0.4];
\path[draw]  (8,5.1)--(18,5)--(19,6.7)--(18.5,8.5)--(9.1,9)--(1,7.7)--cycle;
\path[draw] (8,5.1)--(9.5,7)--(18,5);
\path[draw] (19,6.7)--(9.5,7)--(18.5,8.5);
\path[draw] (9.1,9)--(9.5,7)--(1,7.7);
%
\draw[fill] (4,2.4) circle [radius=0.4];
\path[draw]  (4,0.7)--(16,0)--(17,1.7)--(15.8,3.5)--(4,4.5)--(4,2.4)--cycle;
\path[draw] (4,2.4)--(16,0);
\path[draw] (17,1.7)--(4,2.4)--(15.8,3.5);
\path[draw,ultra thick
] (4,0)--(4,5);
\end{tikzpicture}
\hfill
\begin{tikzpicture}[scale=0.22]
\draw[fill] (9.5,7) circle [radius=0.4];
\path[draw]  (10,4.5)--(20,5)--(21,6.7)--(20.5,8.5)--(9.1,9)--(7,7.7)--(7.1,5)--cycle;
\path[draw] (10,4.5)--(9.5,7)--(20,5);
\path[draw] (21,6.7)--(9.5,7)--(20.5,8.5);
\path[draw] (9.1,9)--(9.5,7)--(7,7.7);
\path[draw] (7.1,5)--(9.5,7);
%
\draw[fill] (9.5,0) circle [radius=0.4];
\path[draw]  (7,0)--(7.5,2.4)--(10,2.9)--(20.5,1.8)--(21,0)--cycle;
\path[draw] (7.5,2.4)--(9.5,0)--(20.5,1.8);
\path[draw] (10,2.9)--(9.5,0);
\path[draw,ultra thick
] (6,0)--(22,0);
\end{tikzpicture}
\hfill
\begin{tikzpicture}[scale=0.21]
\draw[fill] (9.5,2.2) circle [radius=0.4];
\path[draw]  (10,0)--(20.2,0.5)--(21,2.7)--(20.5,4.5)--(9.1,10)--(7.1,3.5)--(7.3,0.7)--cycle;
\path[draw] (10,0)--(9.5,2.2)--(20.2,0.5);
\path[draw] (21,2.7)--(9.5,2.2)--(20.5,4.5);
\path[draw] (9.1,10)--(9.5,2.2)--(7.1,3.5);
\path[draw] (7.3,0.7)--(9.5,2.2);
\end{tikzpicture}
\hfill~
\caption{Examples of anisotropic nodes $z\in{\mathcal N}_{\rm ani}$ (left),
 nodes $z\in{\mathcal N}\backslash({\mathcal N}_{\rm ani}\cup {\mathcal N}_{\rm iso})$ (centre), an isotropic node $z\in{\mathcal N}_{\rm iso}$ (right),
and a node
$z\in{\mathcal N}_{\pt\Omega}^*$ (bottom left).
Examples of nodes that satisfy ${\mathcal A}1_{\rm ani}^*$ (top left), ${\mathcal A}1_{\rm ani}$ (bottom left), and ${\mathcal A}1_{\rm mix}$ (centre and right).
}
\label{fig_node_types}
\end{figure}
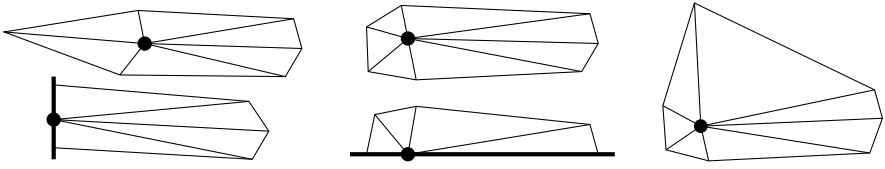

We also distinguish 
subsets ${\mathcal N}_{\rm ani}$, 
${\mathcal N}_{\rm iso}$
and ${\mathcal N}^*_{\pt\Omega}$
of ${\mathcal N}$ (see Fig.\,\ref{fig_node_types}).
Note that ${\mathcal N}_{\rm ani}\cap {\mathcal N}_{\rm iso}=\emptyset$,
while ${\mathcal N}\backslash({\mathcal N}_{\rm ani}\cup {\mathcal N}_{\rm iso})$ is not necessarily empty.

\smallskip

(1) {\it Anisotropic nodes}, whose set is denoted by ${\mathcal N}_{\rm ani}$, are such that
\beq\label{ani_node}
h_z\ll H_z,
\qquad\quad
h_T\sim h_z\;\;\mbox{and}\;\;H_T\sim H_z \quad\;\forall \ T\subset\omega_z.
\eeq
\noindent
Note that \eqref{ani_node} implies \eqref{mixed_node},
while $z\in{\mathcal N}_{\rm ani}$ implies $\mathring{h}_z\sim h_z$.
\smallskip

(2) {\it Isotropic nodes}, to whose set we shall refer as ${\mathcal N}_{\rm iso}$, are such that $h_z\sim H_z$.
%
\smallskip

(3) One may expect anisotropic elements near the boundary to be aligned along it. To distinguish some boundary nodes for which it is not the case,
we introduce
\beq\label{N_boundary}
{\mathcal N}^*_{\pt\Omega}:=\bigl\{z\in{\mathcal N}_{\rm ani}\cap\pt\Omega\backslash\{\mbox{corners of~}\Omega\}\mbox{~and~}|{\mathcal S}_z\cap\pt\Omega|\sim h_z\lesssim \eps\bigr\}.
\eeq

Occasionally, we shall make additional assumptions that we describe below.
\medskip

\begin{enumerate}

\setlength\labelwidth{5cm}

\item[$\mathcal A1\,$~~~]
Each $z\in\mathcal N$ with $h_z\lesssim \eps$
 satisfies $z\in {\mathcal N}_{\rm ani}\backslash\{\mbox{corners of~}\Omega\}$ and condition~${\mathcal A}1_{\rm ani}$, or it satisfies  condition ${\mathcal A}1_{\rm mix}$; see below.
 \medskip

\item[${\mathcal A}1_{\rm ani}$] {\it Quasi-non-obtuse anisotropic elements.}
Let the maximum triangle angle at 
$z\in {\mathcal N}_{\rm ani}$
be bounded by $\frac\pi2+\lambda_1 \frac{h_z}{H_z}$
for some positive constant~$\lambda_1$.
\medskip

\item[${\mathcal A}1_{\rm mix}\!$] With  $\mathring{\mathcal S}_z:=\{S\subset{\mathcal S}_z: |S|\sim \mathring{h}_z\}$ and
 $\mathring{\omega}_z:=\{T\subset \omega_z: h_T\sim H_T\sim \mathring{h}_z\}$ respectively denoting the sets of edges and isotropic triangles of diameter${}\sim\mathring{h}_z$
 within $\omega_z$,
 let
 $(\mathring{\omega}_z\cup\mathring{\mathcal S}_z)\backslash\{z\}$
 be connected.
 %
\medskip

\item[${\mathcal A}2\,$~~~]
Each $z\in\mathcal N$ with $h_z\gtrsim \eps$
 satisfies $\mathring{h}_z\ge \sqrt{6}\eps$.
\medskip
\end{enumerate}

\noindent
Note that ${\mathcal A}1_{\rm ani}$ is always satisfied by isotropic elements, so it requires only some of the anisotropic part of the mesh to be close to a non-obtuse triangulation.
${\mathcal A}1_{\rm mix}$ is also always satisfied on shape-regular meshes (as then $\mathring{\omega}_z=\omega_z$).
For anisotropic nodes, ${\mathcal A}1_{\rm mix}$ may be satisfied if $z\in\pt\Omega$
(in this case,
$\mathring{\omega}_z=\emptyset$, while
$\mathring{\mathcal S}_z\backslash\{z\}$ is connected only if $\mathring{\mathcal S}$ contains a single edge).
Note also that  $\mathcal A2$ is satisfied for any $z\not\in {\mathcal N}_{\rm iso}$, while for isotropic nodes
 it does impose a mild restriction
(as for the latter, $h_z\sim H_z$, so whenever $H_z\gtrsim \eps$, within $\omega_z$ we impose $h_T\ge \sqrt{6}\eps$).

\medskip

We shall also consider a weaker version of ${\mathcal A}1$.
\smallskip

\begin{enumerate}
\item[$\mathcal A1^*\!$~~]
Each $z\in\mathcal N$ with $h_z\lesssim \eps$
 satisfies $z\in {\mathcal N}_{\rm ani}\backslash\pt\Omega$ and condition~${\mathcal A}1_{\rm ani}^*$,
 or $z\in {\mathcal N}^*_{\pt\Omega}$ and satisfies~${\mathcal A}1_{\rm ani}$,
 or it satisfies  condition ${\mathcal A}1_{\rm mix}$.
 \smallskip

\item[${\mathcal A}1_{\rm ani}^*$]
{\it Local Element Orientation condition.}
For $z\in\mathcal N_{\rm ani}$, there exists a rectangle $R_z\supset \omega_z$ such that $|R_z|\sim |\omega_z|$.
\smallskip
\end{enumerate}


\section{Finite element method with quadrature}\label{sec_Q}

We discretize \eqref{eq1-1} using  linear finite elements.
Let $S_h \subset H_0^1(\Omega)\cap C(\bar\Omega)$ be
a piecewise-linear finite element space  relative to a triangulation $\mathcal T$, and let
the computed solution $u_h \in S_h$ satisfy
\begin{align}
\eps^2 \langle\nabla u_h,\nabla v_h\rangle+
\langle f_h,v_h\rangle_h
=0\quad
\forall\;v_h \in S_h,
\qquad\quad
f_h(\cdot):=f(\cdot;u_h).
\label{eq1-2}
\end{align}
Here $\langle\cdot,\cdot\rangle$ is the $L_2(\Omega)$ inner product,
and $\langle\cdot,\cdot\rangle_h$ is its quadrature approximation.

We now describe
$\langle f_h,v_h\rangle_h$
used in 
\eqref{eq1-2}.
For the integral over $T\in \mathcal T$, a quadrature formula $Q_T$ is employed, which
is anisotropic on a certain subset ${\mathcal T}^*$ of anisotropic elements:
%
\beq\label{Q_T}
Q_T(w)=|T|\sum_{j=1}^3 \theta_{T;z_j} w(z_j):=
\left\{\!
\begin{array}{cl}
\frac13|T|\bigl(w(z_1)+w(z_2)+w(z_3)\bigr)&\mbox{for~}T\in{\mathcal T}\backslash {\mathcal T}^*,\\[0.2cm]
\frac12|T|\bigl(w(z_1)+w(z_2)\bigr)&\mbox{for~}T\in {\mathcal T}^*.
\end{array}
\right.
\eeq
Here $\{z_j\}_{j=1}^3$ are the vertices of $T$, with $z_3=:z^*$ opposite the shortest edge, while
\beq\label{Tcal_star}
{\mathcal T}^*:=\{T\in {\mathcal T} : h_T\ll H_T \mbox{~and~}h_{T}\lesssim \eps\}
{}\,\backslash\, {\mathcal T}_0\,,
\eeq
with ${\mathcal T}_0\subset\{T\in {\mathcal T} : z^*\in{\mathcal N}_{\rm iso}\mbox{~and~}z_1,z_2\not\in {\mathcal N}_{\rm ani}\backslash\pt\Omega\}$
(so, unless one wants to minimize ${\mathcal T}^*$, the simplest option is ${\mathcal T}_0:=\emptyset$).
%
Now, let
\beq\label{quadr}
\langle f_h,v_h\rangle_h:=\sum_{T\in{\mathcal T}\backslash {\mathcal T}^*}Q_T( f_h v_h)
+\sum_{T\in{\mathcal T}^*}Q_T(\widebar f_h v_h),
\eeq
where
\beq\label{f_bar}
\widebar f_h=\bar f_{h; T}:={\textstyle\frac13}[f_h(z_1)+f_h(z_2)+f_h(z_3)]\quad\forall\ T\in{\mathcal T}.
\eeq
As $\widebar f_h$ is an elementwise constant approximation of $f_h$, so $Q_T(\bar f_h v_h)=\bar f_{h,T} Q_T( v_h)$.

Note that the discretization \eqref{eq1-2},\,\eqref{Q_T},\,\eqref{Tcal_star},\,\eqref{quadr} can be written as
\beq\label{FEM}
\sum_{S\subset\gamma_z}{\textstyle\frac12}\eps^2|S| \bigl[\partial_\bnu u_h\bigr]_{S}+\sum_{T\subset\omega_z}\theta_{T;z}|T|\,F_{T;z}=0\qquad
%
%
\forall\ z\in{\mathcal N}\backslash\pt\Omega,
\eeq
where $F_{T;z}:=f_h(z)$ for $T\in{\mathcal T}\backslash {\mathcal T}^*$ and $F_{T;z}:=\bar f_{h; T} $ for $T\in {\mathcal T}^*$.
It will be sometimes convenient to replace the second sum here using an average $\widebar F_z$ of $F_{T;z}$, associated with $z$, defined by
\beq\label{f_bar_z}
\widebar F_z\sum_{T\subset\omega_z}\!\theta_{T;z}|T|:=\!\sum_{T\subset\omega_z}\!\theta_{T;z}|T|\,F_{T;z}
\;\;\forall\ z\not\in\pt\Omega,
\quad
\widebar F_z
%
:=\color{blue}\left\{\!\!\begin{array}{cl}
0&\!\!\mbox{for}\; z\in{\mathcal N}^*_{\pt\Omega} : H_z\gtrsim \eps,\\
f_h(z)&\!\! \mbox{otherwise for}\;z\in\pt\Omega.
\end{array}\right.
\vspace{0.1cm}
\eeq

\begin{remark}
The above quadrature yields the standard linear lumped-mass finite element discretization
on ${\mathcal T}\backslash {\mathcal T}^*$.
On $ {\mathcal T}^*\!$,
a special anisotropic quadrature is employed (designed to address certain convergence issues reported in \cite{Kopt_mc14}).
The resulting method may be also interpreted as
the
vertex-centered finite volume method (or the box method) with a special choice of control volumes,
 applied to the approximation of our equation $-\eps^2\triangle u+ \widebar{ f(\cdot;u)}=0$.
A related interpretation as a
Petrov-Galerkin method  is also possible.%
\end{remark}%
\smallskip

\begin{remark}
Our results remain valid, if $Q_T( f_h v_h)$ is replaced by $Q_T(\widebar f_h v_h)$ in the first sum in \eqref{quadr}.
However, using
$Q_T( f_h v_h)$ for $h_T\gg \eps$ yields a superior discretization
(as in this case the local stiffness matrices become negligible so diagonal mass matrices are preferable).
On the other hand, replacing $Q_T(\widebar f_h v_h)$  by $Q_T(f_h v_h)$ in the second sum in \eqref{quadr} yields a less standard lumped-mass discretization on ${\mathcal T}^*$.
For the latter choice, our estimator will enjoy a version of the upper bound~\eqref{upper_bound}
with $\lambda_T$ replaced by $1$ whenever $h_T\ll H_T \lesssim\eps$.
Furthermore, all our results remain valid without any changes if
$Q_T(\widebar f_h v_h)$  is used  only for $T\in {\mathcal T}^* : H_T \lesssim\eps$.
\end{remark}
\smallskip

\section{A posteriori error estimator. Main Results}\label{sec_main_res}

We start with a relatively standard auxiliary result,  a version of which can be found, for example, in \cite[Lemma~1]{AinsVej_NM2011}
and \cite[Theorem~3.1]{Ched_Vohr_2009}.
\smallskip

\begin{theorem}\label{theo_flux}
For any $u_h\in S_h$,
let $\btau\in H^1({\rm div},T)\ \forall T\in\mathcal T$ also satisfy
\beq\label{tau_jump}
[\btau\cdot \bnu]=
[\partial_\bnu u_h]
\qquad\mbox{on all}\;\;S\in{\mathcal S}\backslash\pt\Omega.
\eeq
Then, for a solution $u$ of \eqref{eq1-1}, with $C_f>0$, one has
\beq\label{error_lemma}
\vvvert u_h-u \vvvert_{\eps\,;\Omega}\le
{\mathcal E}:=
\Bigl\{
\|\eps \btau\|_\Omega^2+C_f^{-1} \|\eps^2{\rm div}\btau+f_h\|_\Omega^2
\Bigr\}^{1/2}.
\eeq
\end{theorem}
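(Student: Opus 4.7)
The plan is standard equilibrated-flux bookkeeping: test the difference of the weak formulations of \eqref{eq1-1} for $u$ and for $u_h$ against $e:=u_h-u\in H^1_0(\Omega)$, absorb the nonlinearity via the one-sided Lipschitz hypothesis to recover the $C_f\|e\|^2_\Omega$ contribution to the energy norm, and then rewrite the remaining residual in terms of $\btau$ using the hypothesis \eqref{tau_jump}.

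First, since $u$ solves $\eps^2\langle\nabla u,\nabla v\rangle+\langle f(\cdot;u),v\rangle=0$ for all $v\in H^1_0(\Omega)$, testing with $v=e$ gives
\[
\eps^2\|\nabla e\|_\Omega^2+\langle f_h-f(\cdot;u),\,e\rangle \;=\; \eps^2\langle\nabla u_h,\nabla e\rangle+\langle f_h,\,e\rangle.
\]
Examining the two cases $u_h\ge u$ and $u>u_h$ pointwise, the one-sided Lipschitz hypothesis gives $(f_h-f(\cdot;u))\,(u_h-u)\ge C_f(u_h-u)^2$, so the left-hand side is bounded below by $\vvvert e\vvvert^2_{\eps;\Omega}$.

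Next I process the right-hand side by integrating by parts element by element. For each $T\in\mathcal T$,
\[
\int_T \btau\cdot\nabla e \;=\; -\!\int_T (\mathrm{div}\,\btau)\,e \;+\; \int_{\pt T}(\btau\cdot\bnu)\,e.
\]
Summing over $T$, the boundary contribution vanishes because $e\in H^1_0(\Omega)$, and the remaining edge contributions collapse to $\sum_{S\in\mathcal S\backslash\pt\Omega}\int_S [\btau\cdot\bnu]\,e$. Applying the same element-wise integration by parts to $\eps^2\langle\nabla u_h,\nabla e\rangle$ and using that $u_h$ is piecewise linear (so $\triangle u_h\equiv 0$ on each $T$) yields $\sum_{S}\int_S \eps^2[\pt_\bnu u_h]\,e$. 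The jump hypothesis \eqref{tau_jump} identifies these two edge sums, and rearrangement gives the identity
\[
\eps^2\langle\nabla u_h,\nabla e\rangle+\langle f_h,e\rangle \;=\; \langle\eps^2\btau,\nabla e\rangle \;+\; \langle\eps^2\mathrm{div}\,\btau+f_h,\,e\rangle.
\]

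Finally, a weighted Cauchy--Schwarz step — pair $\eps\btau$ with $\eps\nabla e$, and $C_f^{-1/2}(\eps^2\mathrm{div}\,\btau+f_h)$ with $C_f^{1/2}e$ — bounds the right-hand side by $\mathcal E\cdot\vvvert e\vvvert_{\eps;\Omega}$, and division by $\vvvert e\vvvert_{\eps;\Omega}$ yields \eqref{error_lemma}. The result is structurally independent of any anisotropy or of the specific construction of $\btau$ (which is why it serves as the abstract starting point for the paper). The only genuinely delicate points are the sign analysis in the one-sided Lipschitz step and the careful cancellation of boundary terms when matching the jumps of $\btau\cdot\bnu$ with those of $\pt_\bnu u_h$; both are routine but must be kept in order.
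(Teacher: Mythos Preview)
Your proof is correct and follows essentially the same route as the paper's: use the one-sided Lipschitz condition to bound $\vvvert e\vvvert_{\eps;\Omega}^2$ from above by $\eps^2\langle\nabla u_h,\nabla e\rangle+\langle f_h,e\rangle$, convert this residual into $\langle\eps^2\btau,\nabla e\rangle+\langle\eps^2\mathrm{div}\,\btau+f_h,e\rangle$ via element-wise integration by parts and the jump hypothesis \eqref{tau_jump}, and finish with weighted Cauchy--Schwarz. Your write-up is in fact a bit more explicit than the paper's about the pointwise sign analysis and the final Cauchy--Schwarz pairing, but the argument is the same.
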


\begin{proof}
With $v:=u_h-u$, one has
\begin{align*}
\vvvert u_h-u \vvvert^2_{\eps\,;\Omega}&\le
\eps^2 \langle\nabla (u_h-u),\nabla v\rangle+\langle f(\cdot;u_h)-f(\cdot;u),v\rangle\\
&=
\eps^2 \langle\nabla u_h,\nabla v\rangle+\langle f(\cdot;u_h),v\rangle\,,\\
&=
\eps^2 \langle\btau ,\nabla v\rangle+\langle \eps^2{\rm div}\btau+f_h,v\rangle,
\end{align*}
which immediately implies \eqref{error_lemma}.
Here we employed the observation (obtained using $\triangle u_h=0$ in any $T$, and \eqref{tau_jump}) that
$$
\langle\nabla u_h,\nabla v\rangle+\langle\underbrace{\triangle  u_h}_{{}=0}, v\rangle
=\sum_{S\in\mathcal S}\int_S v\underbrace{[\partial_\bnu u_h]}_{{}=[\btau\cdot \bnu]}
=\langle\btau ,\nabla v\rangle+\langle{\rm div}\btau , v\rangle.
$$
Note that here (as well as in \eqref{error_lemma}),  with slight abuse of notation, we understood
$\triangle  u_h$ and ${\rm div}\btau$ as regular functions in $\Omega$ defined elementwise.
\end{proof}
\medskip

\begin{remark}[Cases $C_f\ge0$ and $C_f=C_f(x,y)$]
An inspection of the above proof shows that the estimator
${\mathcal E}$ in
\eqref{error_lemma} can be replaced by a more general
$$
{\mathcal E}:=
\Bigl\{
(1-\vartheta)^{-1}\|\eps \btau\|_\Omega^2+\bigl(C_f+\eps^2C_{\Omega}^{-2}\vartheta\bigr)^{-1} \|\eps^2{\rm div}\btau+f_h\|_\Omega^2
\Bigr\}^{1/2},
$$
where $C_\Omega$ is the Poincar\'{e} constant, and $\vartheta\in[0,1)$ is an arbitrary constant, with $\vartheta>0$ unless $C_f>0$.
Note also that if $C_f=C_f(x,y)$, the above result remains valid with an obvious modification of the energy norm to
$\vvvert v \vvvert_{\eps\,;\Omega}:=
\bigl\{\eps^2\|\nabla v \|^2_\Omega
+\|C_f^{1/2}v \|^2_\Omega \bigr\}^{1/2}$
and a similar modification of ${\mathcal E}$.
\end{remark}

\newpage
\subsection{Structure of $\btau$}
Clearly, $\btau$ that satisfies the conditions of Theorem~\ref{theo_flux} is not unique, and there are various choice available in the literature.

Our task in this paper is to explicitly define $\btau$ to be used in \eqref{error_lemma} in a way that is appropriate for anisotropic meshes.
We introduce a suitable $\btau$ in the form
\beq\label{tau_def}
\btau:=\sum_{z\in\mathcal N}\btauz + \sum_{T\in \Tf}\btauT^f+
\sum_{S\in{\mathcal S}^*}\btauS^J, \qquad\quad
\Tf:=\{ T\in{\mathcal T}: H_T\lesssim \eps\},
\eeq
where $\btauz$, $\btauT^f$, and $\btauS^J$ have support on $\omega_z$, $T$, and
$T\cup T'$ for
$S=\pt T\cap\pt T'$, respectively.
It is also convenient to set $\btauT^f$ and $\btauS^J$ to $0$
whenever respectively $T\not\in \Tf$ and $S\not \in{\mathcal S}^*$.
Under condition ${\mathcal A}1$, we set ${\mathcal S}^*:=\emptyset$, while otherwise ${\mathcal S}^*$ is essentially the set of short edges
shared by pairs of anisotropic triangles
(see \S\ref{sec_A1_star} for details).

To be more precise, in the case $ {\mathcal S}^*=\emptyset$, the function
$\btauz$, with support on $\omega_z$, is simply required to satisfy
\beq\label{tau_n}
[\btauz\cdot \bnu]
=
\phi_z[\partial_\bnu u_h]\;\;\mbox{on}\;\;\gamma_z,
\qquad\quad
\btauz\cdot \bnu=0\;\;\mbox{on}\;\;\pt\omega_z\backslash\pt\Omega,
\eeq
{\color{blue}where $\{\phi_z\}_{z\in\mathcal N}$ are the standard basis
hat functions.}
The function $\btauT$, with support in $T$, satisfies
\beq\label{tau_f}
\btauT\cdot \bnu=0\quad\mbox{on}\;\pt T \quad\mbox{and}\quad
\eps^2{\rm div}\btauT+(f_h^I-\widebar f_h)=0\quad\mbox{in}\; T\qquad\forall\ T\in\Tf,
\eeq
and is explicitly defined (see, e.g., \cite[(22)]{AinsVej_NM2011}) by
\beq\label{tau_f_def}
\btauT^f:=
 b_1\phi_2\phi_3|S_1|\bmu_1
+b_2\phi_3\phi_1|S_2|\bmu_2
+b_3\phi_1\phi_2|S_3|\bmu_3,
\qquad
{\color{blue}b_j}:={\textstyle\frac13}\eps^{-2}\nabla f_h^I\cdot |S_j|\bmu_j\,,
\eeq
where $\{z_j\}_{j=1}^3$ are the vertices of $T$ with the corresponding basis functions $\phi_{j}:=\phi_{z_j}$, and for $j=1,2,3$,
the edge $S_j$ is opposite to $z_j$, while the counterclockwise tangential unit vector $\bmu_j$ lies along $S_j$;
see Fig.\,\ref{fig_tau_f}.
\medskip

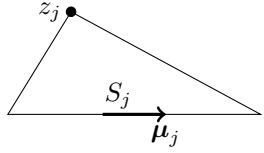
\begin{figure}[t!]
~\hfill
\begin{tikzpicture}[scale=0.21]
\draw[fill] (4,6.5) circle [radius=0.3];
\path[draw]  (4,6.5)node[left] {$z_j$}--(0,0)--(16,0)--cycle;
\node[above] at (7,-0.2) {$S_j$};
\draw[very thick, ->] (6,0) -- (10,0) node[below] {${\bmu}_j$};
%
\end{tikzpicture}
\hfill~\vspace{-0.4cm}
\caption{Notation used in \eqref{tau_f_def}:
the edge $S_j$ is opposite to the vertex $z_j$, the counterclockwise tangential unit vector $\bmu_j$ lies along $S_j$.
}
\label{fig_tau_f}
\end{figure}

\subsection{Upper bound for the estimator}
In this section, we present a theorem, which, combined with \eqref{error_lemma},
gives the upper bound \eqref{upper_bound} for the estimator $\mathcal E$.
At the same time, this theorem provides valuable information on the local properties
of the components of $\btau$ in \eqref{tau_def}.
These components (except for $\btauT^f$) are constructed and analyzed
in \S\S\ref{sec_A1}--\ref{sec_A1_star}
for the case $h_z\lesssim \eps$,  and in \S\ref{sec_A2}
for the case $h_z\gtrsim \eps$.
So, with the exception of  \eqref{tau_f_main}, all bounds in the following theorem
will be obtained in these forthcoming sections.
(To be more precise, here we summarize the results of
Lemmas~\ref{cor_case1}, \ref{lem_beta}, \ref{lem_beta_star} and~\ref{lem_theo_bound_case2}.)
\medskip

\begin{theorem}\label{theo_main_bounds}
Let $u_h$ solve \eqref{eq1-2} with
$\langle \cdot,\cdot\rangle_h$ defined in \S\ref{sec_Q}, and set
$$
J_z:=\max_{S\subset\gamma_z}\Bigl|[\pt_\bnu u_h]_S\Bigr|
\qquad\mbox{and}\qquad
\lambda_T:=\min\{1,\,H_T \eps^{-1}\}
\;\;
\;\;\forall T\in{\mathcal T}\!.
$$
(i) Under conditions ${\mathcal A}1$ and ${\mathcal A}2$,
one can construct $\btau$, subject to \eqref{tau_jump}, in the form \eqref{tau_def} with ${\mathcal S}^*=\emptyset$, where
$\btau_z$ and an associated function $g_z$, both with support in $\omega_z$,
satisfy, for any $z\in\mathcal N$,
\begin{align}\label{theo_div_bound}
%
\sum_{\textstyle{z\in{\mathcal N}:{}\above 0pt h_z\lesssim \eps}}
\eps^2{\rm div}\btauz+\widebar f_{h}={}&\sum_{z\in\mathcal N}g_z
\quad\mbox{in~}\Omega,
\\
\notag
\mathbbm{1}_{h_z \gtrsim \eps}\, \|\eps^2{\rm div}\btauz\|_{\omega_z}\!+\|\eps\btauz\|_{\omega_z}\!
+\|g_z\|_{\omega_z}\!\lesssim{}
&
{\color{blue}\min\{1,\eps h_z^{-1}\}^{1/2}}\|
\eps J_z\|_{\omega_z}\!+
 \min\{1,h_z\eps^{-1}\}\|f_h^I\|_{\omega_z}
 \\[0.2cm]
 +\!\!\sum_{T\subset\omega_z}\!\!\lambda_T
 &
 \|{\rm osc}(f_h^I;T)\|_{T}
{}+\mathbbm{1}_{z\in{\mathcal N}^*_{\pt\Omega}}\,\|\lambda_T f_h(z)\|_{\omega_z},
 \label{tau_z_main}
\end{align}
while $\btauT^f$ from \eqref{tau_f_def} satisfies \eqref{tau_f}, and, for any $T\in\mathcal T$,
\beq\label{tau_f_main}
\|\eps^2{\rm div}\btau^f_T+(f_h-\widebar f_h)\|_{T}+\|\eps\btau^f_T\|_{T}\lesssim \lambda_T\|{\rm osc}(f_h^I;T)\|_{T}+\|f_h-f_h^I\|_{T}.
\eeq
(ii) Under conditions ${\mathcal A}1^*$ and ${\mathcal A}2$,
one can construct $\btau$, subject to \eqref{tau_jump}, in the form \eqref{tau_def} with ${\mathcal S}^*\neq\emptyset$,
such that the above relations \eqref{theo_div_bound},\,\eqref{tau_z_main},\,\eqref{tau_f_main} hold true and, in addition,
for any edge $S=\pt T\cap\pt T'\in {\mathcal S}^*$
{\color{blue}with an endpoint $z$}, 
\beq\label{bound_theo_S}
{\rm div} \btauS^J=0\;\;\mbox{in~}T\cup T',\quad\;\;
\bigl\|\eps\btauS^J\bigr\|_{T\cup T'}\lesssim  \bigl\| \eps[\pt_\bnu u_h]_{S}\bigr\|_{T\cup T'}
\lesssim
{\color{blue}\min\{1,\,\eps h_z^{-1}\}^{1/2}\|\eps J_z\|_{\omega_z}\,.}
\eeq
\end{theorem}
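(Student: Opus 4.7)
The plan is to assemble the decomposition \eqref{tau_def} from local pieces constructed in the forthcoming sections and check that they glue together to yield all the claimed global properties. The first observation is that, since $\btauT^f\cdot\bnu=0$ on $\pt T$ (from \eqref{tau_f}) and $\btauS^J$ will be constructed in \S\ref{sec_A1_star} with vanishing normal trace on $\pt(T\cup T')$, neither family contributes to the normal-flux jump across inter-element edges. The jump condition \eqref{tau_jump} therefore reduces to a condition on $\sum_{z}\btauz$, which follows at once from \eqref{tau_n} (or its refinement used when ${\mathcal S}^*\neq\emptyset$) together with the partition of unity $\sum_z\phi_z\equiv 1$: on each $S\in{\mathcal S}\setminus\pt\Omega$ the two endpoint nodes contribute $\phi_z[\pt_\bnu u_h]_S$, summing to $[\pt_\bnu u_h]_S$.

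The bound \eqref{tau_f_main} on $\btauT^f$ I would derive directly from the explicit formula \eqref{tau_f_def}. From \eqref{tau_f} one has $\eps^2\mathrm{div}\btauT^f + (f_h-\widebar f_h) = f_h - f_h^I$, which supplies the divergence contribution to \eqref{tau_f_main}. For the flux norm, since $|S_j|\nabla f_h^I\cdot\bmu_j$ is a difference of two vertex values of $f_h^I$, one has $|b_j|\lesssim \eps^{-2}\mathrm{osc}(f_h^I;T)$; combined with $\|\phi_i\phi_j\|_T\sim|T|^{1/2}$ this gives $\|\eps\btauT^f\|_T \lesssim \eps^{-1}H_T\|\mathrm{osc}(f_h^I;T)\|_T$. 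Because $\btauT^f$ is set to zero unless $T\in\Tf$, i.e.\ unless $H_T\lesssim \eps$, in that regime $\eps^{-1}H_T\sim \lambda_T$ and \eqref{tau_f_main} follows.

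The remaining bounds \eqref{theo_div_bound} and \eqref{tau_z_main} on $\btauz$ and its associated $g_z$ I would obtain by splitting the analysis according to the scale of $h_z$. For $h_z\lesssim \eps$ the patchwise equilibration of \S\S\ref{sec_A1}--\ref{sec_A1_star}, using the anisotropic weights of \eqref{beta_choice_case1}, enforces the divergence identity $\eps^2\mathrm{div}\btauz = g_z - \theta\widebar f_h$ on $\omega_z$, with the leftover $g_z$ controlled by the data-oscillation terms and by $\|\eps J_z\|_{\omega_z}$. For $h_z\gtrsim \eps$ the construction of \S\ref{sec_A2} allows a nonzero but controllable divergence on $\omega_z$, reflected in the indicator $\mathbbm{1}_{h_z\gtrsim \eps}$ on the divergence term of \eqref{tau_z_main}. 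In part~(ii) the divergence-free corrections $\btauS^J$ from \S\ref{sec_A1_star} are added only on short edges between two anisotropic triangles, so they leave \eqref{theo_div_bound} untouched; their bound \eqref{bound_theo_S} follows from their explicit form together with \eqref{tau_jump} restricted to $S$ and with the local geometry under ${\mathcal A}1^*_{\rm ani}$. The full statement is then the concatenation of Lemmas~\ref{cor_case1}, \ref{lem_beta}, \ref{lem_beta_star} and~\ref{lem_theo_bound_case2}.

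The main obstacle behind these local pieces, and the reason the construction is nontrivial, is making patchwise flux equilibration robust on anisotropic patches: with naively chosen weights the local linear systems degenerate, and the resulting bounds pick up factors of the aspect ratio $H_z/h_z$. The role of the weights \eqref{beta_choice_case1} and of the splitter $g_z$ is precisely to keep the local problems well-posed and the resulting bound in \eqref{tau_z_main} scale-free in both $H_z/h_z$ and $\eps$, producing only the sharp factor $\min\{1,\eps h_z^{-1}\}^{1/2}$ on $\|\eps J_z\|_{\omega_z}$. The further subtle case is when ${\mathcal A}1$ fails but ${\mathcal A}1^*$ holds: on a short edge shared by two anisotropic triangles the node-patch corrections $\btauz$ alone cannot absorb the jump at the right rate, and the divergence-free $\btauS^J$ must be introduced and then reconciled with \eqref{tau_n}; this compatibility is the hardest ingredient, and is handled in \S\ref{sec_A1_star}.
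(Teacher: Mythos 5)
Your high-level structure matches the paper: the proof of Theorem~\ref{theo_main_bounds} is indeed a concatenation of Lemmas~\ref{cor_case1}, \ref{lem_beta}, \ref{lem_beta_star} and \ref{lem_theo_bound_case2}, with only \eqref{tau_f_main} proved inline, and your inline derivation of \eqref{tau_f_main} for $T\in\Tf$ is essentially the paper's (you omit the case $T\notin\Tf$, where one must use $\btauT^f=0$, $\lambda_T\sim 1$ and $|f_h-\widebar f_h|\le|f_h-f_h^I|+{\rm osc}(f_h^I;T)$, but that is a small omission).

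The genuine gap is your treatment of \eqref{tau_jump} in part (ii). You assert that ``$\btauS^J$ will be constructed in \S\ref{sec_A1_star} with vanishing normal trace on $\pt(T\cup T')$, so neither family contributes to the normal-flux jump across inter-element edges.'' This is false, and it directly contradicts Lemma~\ref{lem_tauS}: relation~\eqref{taus_jumps} says $\btauS^J\cdot\bnu=0$ only on the short edge $S$ itself, whereas $|S''|\,\btauS^J\cdot\bnu=\kappa_S(\phi_z-\phi_{z'})$ on every other edge $S''\subset\pt T\setminus S$, which is nonzero in general. So $\btauS^J$ \emph{does} inject spurious normal-flux jumps across the long edges of $\gamma_z$, and they must be absorbed back into $\btauz$. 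That is precisely what the substitution $\beta_i\mapsto\beta_i^*+\kappa_i+\kappa_{i+1}$ of \eqref{beta_star} does, and it is the heart of Lemma~\ref{lem_beta_star}. Your closing sentence (``the divergence-free $\btauS^J$ must be introduced and then reconciled with \eqref{tau_n}'') gestures at the right idea, but it contradicts your earlier definitive claim of vanishing normal trace; taken as written, your first paragraph would make Lemma~\ref{lem_beta_star} unnecessary and the argument for \eqref{tau_jump} in part~(ii) incomplete. Separately, the reason $\btauS^J$ leaves \eqref{theo_div_bound} untouched is that ${\rm div}\btauS^J=0$, not that these corrections ``are added only on short edges'' --- a minor imprecision, but worth fixing.
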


{\it Proof of \eqref{tau_f_main}.}
If $T\in\Tf$, so, by \eqref{tau_def}, $ \lambda_T\sim H_T\eps^{-1}$,
a calculation \cite[\S3.3]{AinsVej_NM2011} shows that
\eqref{tau_f_def} implies \eqref{tau_f},
while
$|\gamma_j|\lesssim\eps^{-2} {\rm osc}(f_h^I;T)$ 
yields $|\eps \btauT^f|\lesssim H_T\eps^{-1}{\rm osc}(f_h^I;T)$.
The desired bound \eqref{tau_f_main} for $T\in\Tf$ follows.
Otherwise, i.e. if $T\not\in\Tf$, so $ \lambda_T\sim 1$, one has $\btauT^f=0$, while
$|f_h-\widebar f_h|\le |f_h-f_h^I|+ {\rm osc}(f_h^I;T)$, so we again get \eqref{tau_f_main}.
\endproof
\medskip

\begin{remark}
Note that for $z\in{\mathcal N}^*_{\pt\Omega}$, the bound \eqref{tau_z_main}
involves $\lambda_T |f_h(z)|\sim \min\{\eps^{-1},H_z^{-1}\}\,H_z|f_h(z)|$,
where $\eps^{-2} H_z|f_h(z)|$
may be interpreted as the diameter of $\omega_z$ under the  metric
induced by the squared Hessian matrix of the exact solution at $z\in\pt\Omega$.
Indeed, as $u=0$ on $\pt\Omega$, the Hessian matrix involves only the normal derivatives, while
$\eps^{-2} f_h=\eps^{-2} f(\cdot; 0)=\pt_\bnu^2 u$ on $\pt\Omega$;
see also the definition of ${\mathcal N}^*_{\pt\Omega}$.
\end{remark}
\smallskip

\section{Construction of  $\btau_z$ for $ h_z \lesssim \eps$ under condition ${\mathcal A}1$}\label{sec_A1}
Let  the patch $\omega_z$ be formed by $N_z$ triangles $\{T_i\}_{i=1}^{N_z}\subset\mathcal T$,
numbered counterclockwise so that $\gamma_z$ is formed by the edges
 $\pt T_{i-1}\cap \pt T_i$ for $i=1,\ldots,N_z$ if $z\not\in\pt\Omega$ (with the notation $T_0:=T_{N_z}$),
and for $i=2,\ldots, N_z$ if $z\in\pt\Omega$ (see Fig.\,\ref{fig_tau_z_case1} (left, centre)).
For each $T_i\subset\omega_z$, let $z$ be opposite to the edge denoted $S_i$, with the outward normal and the counterclockwise tangential unit vectors
denoted $\bnu_i$ and $\bmu_i$ (see Fig.\,\ref{fig_tau_z_case1} (right)).

Define $\btauz$ associated with $z$ by 
\begin{subequations}\label{tauz_h_small}
\beq\label{tau_z}
\btauz:=\phi_z\bigl(\alpha_i \bnu_i+\beta_i d_i^{-1}\bmu_i\bigr)\quad\forall\ T_i\subset\omega_z,
\qquad d_i:=2|T_i|\,|S_i|^{-1},
\eeq
where, using $F_{T;z}$ and $\widebar F_z$ from \eqref{FEM},\,\eqref{f_bar_z},
\beq\label{F_z_T}
{\color{blue}\alpha_i:=\eps^{-2}d_i\theta_{T_i;z} \widetilde F_{T_i;z}, \qquad}
\widetilde  F_{T;z}:=\left\{\begin{array}{cl}
\widebar F_z& \mbox{if}\;\;H_z\gtrsim \eps\mbox{~and~} z\in{\mathcal N}_{\rm ani}
,\\
F_{T;z}& \mbox{otherwise}.
\end{array}\right.
\eeq
Here we require $\{\beta_i\}_{i=1}^{N_z}$ to satisfy
\beq\label{tau_n_hat}
\beta_{i-1}-\beta_{i}+ \alpha_{i-1}\bnu_{i-1}\cdot|S_{i-1}^+|\bnu^+_{i-1}
+ \alpha_{i}\bnu_{i}\cdot|S_i^-|\bnu^-_{i}
=|S_{i}^-| \bigl[\partial_\bnu u_h\bigr]_{\pt T_{i-1}\cap \pt T_i}
\eeq
for $i=1,\ldots,N_z$ if $z\not\in\pt\Omega$,
and for $i=2,\ldots, N_z$ if $z\in\pt\Omega$.
We use the notation $S_i^\pm:=\pt T_i\cap \pt T_{i\pm 1}$, as well as
$\bnu^\pm_i$ and $\bmu^\pm_i$ for the outward normal and the counterclockwise tangential unit vectors of the edge $S_i^\pm$ in triangle $T_i$
(see Fig.\,\ref{fig_tau_z_case1} (right)).
\end{subequations}
\medskip

\begin{figure}[h!]
~\hfill%
\begin{tikzpicture}[scale=0.21]
\draw[fill] (0,0) circle [radius=0.5];
\path[draw]  (0,0)--(0,-6)--(6,-5)--cycle; \node[above] at (2.3,-5.5) {$T_1$};
\node[left] at (-0.2,0.5) {$z$};
\path[draw]  (8,2)--(0,0)--(0,6.5)--cycle;\node[below] at (2.9,4.1) {$T_3$};
\path[draw]  (8,2)--(6,-5);\node[left] at (7,-1.5) {$T_2$};
\path[draw, line width=1.5pt]  (0,-7)--(0,7.5);
\end{tikzpicture}
\hfill
\begin{tikzpicture}[scale=0.21]
\draw[fill] (0,0) circle [radius=0.5];
\path[draw]  (0,0)--(-2,-6)--(6,-5)--cycle; \node[above] at (2,-5.5) {$T_1$};
\node[left] at (-0.2,0.5) {$z$};
\path[draw]  (8,2)--(0,0)--(-2,6.5)--cycle;\node[below] at (2.9,4.25) {$T_3$};
\path[draw]  (-2,6.5)--(-7,-1)--(0,0);\node[right] at (-4.6,2.3) {$T_4$};
\path[draw]  (-7,-1)--(-2,-6);\node[right] at (-4.9,-3.0) {$T_5$};
\path[draw]  (8,2)--(6,-5);\node[left] at (7,-1.5) {$T_2$};
%
\path[draw, line width=1.5pt]  (2,-7)--(2,-7);
\end{tikzpicture}
\hfill~\hfill
\begin{tikzpicture}[scale=0.21]
\draw[fill] (4,8) circle [radius=0.5];
\path[draw]  (4,8)--(0,0)--(16,0)--cycle; \node[left] at (3.7,8.3) {$z$};
\node[above] at (6.3,-0.25) {$S_i$};
\node[left] at (10,3.6) {$S_i^+$};
\node[right] at (1.8,4) {$S_i^-$};
\draw[very thick, ->] (6.5,0) -- (10,0) node[below] {${\bmu}_i$};
\draw[very thick, ->] (6.5,0) -- (6.5,-3.5) node[left] {${\bnu}_i$};
\draw[line width=1.5pt, ->] (10.41602515, 3.722649902) -- (7.503849117, 5.664100589) ;  \node[right]at (7.4, 6.7) {${\bmu}^+_i$} ;
\draw[line width=1.5pt, ->] (10.41602515, 3.722649902) -- (12.35747584, 6.634825932)  ;\node[below] at (13.3, 7) {${\bnu}^+_i$} ;
\draw[line width=1.5pt, ->] (2.223606798, 4.447213595) -- (0.658359213, 1.316718427) ;  \node[left]at (1.1, 1.3) {${\bmu}^-_i$} ;
\draw[line width=1.5pt, ->] (2.223606798, 4.447213595) -- (-0.906888370, 6.012461179)  ;\node[left] at (-0.0, 6.3) {${\bnu}^-_i$} ;
\node[right] at (18,3) {$T_i$};
\path[draw,help lines]  (18,3)
--(9,1.5);
\end{tikzpicture}\vspace{-0.1cm}
\caption{Notation in \eqref{tauz_h_small}:
$\omega_z$ is formed by $\{T_i\}_{i=1}^{N_z}$ with $N_z=3$ (left) and $N_z=5$ (centre);
the edge $S_i$ in $T_i$ is opposite to $z$,  with the outward normal and the counterclockwise tangential unit vectors
$\bnu_i$ and $\bmu_i$;
the other edges
$S_i^\pm=\pt T_i\cap \pt T_{i\pm 1}$ have the outward normal and the counterclockwise tangential unit vectors $\bnu^\pm_i$ and $\bmu^\pm_i$ (right).%
}
\label{fig_tau_z_case1}
\end{figure}
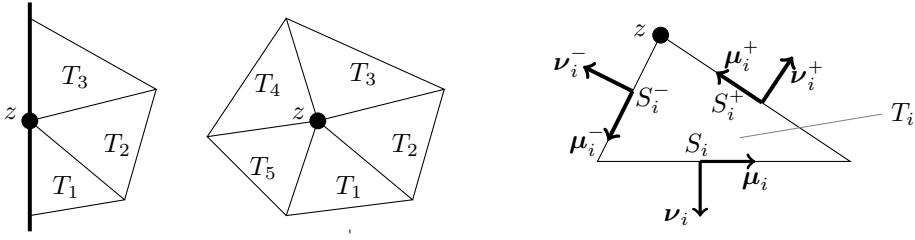

\newpage
\begin{lemma}\label{lem_tau_case1}
Let $ h_z \lesssim \eps$. Then
relations \eqref{tauz_h_small} for $\btauz$ imply \eqref{tau_n} and
\begin{subequations}\label{tauz_small_h_prop}
\begin{align}\label{tauz_small_h_prop_a}
\eps^2{\rm div}\btauz+\theta_{T;z}\widetilde F_{T;z}&=0\qquad \forall\ T\subset\omega_z,
\\\label{tauz_small_h_prop_b}
\|\eps \btauz\|_{\omega_z}&\lesssim
\|
h_z\eps^{-1}f^I_h
\|_{\omega_z}+\eps\,\Bigl\{\sum_{i=1}^{N_z} \beta^2_i d_i^{-2} |T_i|\Bigr\}^{1/2}.
\end{align}
\end{subequations}
The system \eqref{tau_n_hat} for $\{\beta_i\}_{i=1}^{N_z}$ is consistent and has infinitely many solutions.
\end{lemma}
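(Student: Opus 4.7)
The plan is to verify the four assertions of the lemma — the divergence identity, the interface/boundary relations, the $L^2$-bound, and the solvability of \eqref{tau_n_hat} — by direct computation from the definition \eqref{tauz_h_small}. I will start with the divergence identity, which is the cleanest. In each $T_i\subset\omega_z$, the affine hat function satisfies $\nabla\phi_z=-d_i^{-1}\bnu_i$, since $\phi_z(z)=1$, $\phi_z|_{S_i}=0$, and the distance from $z$ to $S_i$ is $d_i$. Because $\alpha_i,\beta_i$ are elementwise constant and $\bmu_i\perp\bnu_i$, termwise differentiation yields $\mathrm{div}\,\btauz=\nabla\phi_z\cdot(\alpha_i\bnu_i+\beta_i d_i^{-1}\bmu_i)=-\alpha_i/d_i$. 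Substituting \eqref{F_z_T} and multiplying by $\eps^2$ gives \eqref{tauz_small_h_prop_a}.

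Next, for \eqref{tau_n}, on each edge $S_i$ opposite $z$ the factor $\phi_z$ vanishes, so $\btauz\equiv 0$ there; since $\partial\omega_z\setminus\partial\Omega$ is the union of such edges $S_i$, the homogeneous boundary condition follows. On an interior edge $e_i=\partial T_{i-1}\cap\partial T_i\subset\gamma_z$, both $\btauz|_{T_{i-1}}\cdot\bnu_{i-1}^+$ and $\btauz|_{T_i}\cdot\bnu_i^-$ are $\phi_z$ times constants, so the jump factors as $\phi_z$ times a bracket in $\alpha_{i-1},\alpha_i,\beta_{i-1},\beta_i$. I will then verify the two planar identities $d_{i-1}^{-1}\bmu_{i-1}\cdot\bnu_{i-1}^+=|e_i|^{-1}$ and $d_i^{-1}\bmu_i\cdot\bnu_i^-=-|e_i|^{-1}$, which follow from the $90^\circ$-rotation relation $\bmu_j=R\bnu_j$ combined with the triangle identity $(p-z)\cdot\bnu_j=d_j$ for any $p\in S_j$. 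Dividing \eqref{tau_n_hat} by $|e_i|$ then identifies the bracket with $[\partial_\bnu u_h]_{e_i}$.

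For the bound \eqref{tauz_small_h_prop_b}, $\|\phi_z\|_{T_i}^2\sim|T_i|$ together with the triangle inequality in \eqref{tau_z} gives $\|\eps\btauz\|_{T_i}^2\lesssim\eps^2\alpha_i^2|T_i|+\eps^2\beta_i^2 d_i^{-2}|T_i|$; the second term is exactly the $\beta$-sum on the right of \eqref{tauz_small_h_prop_b}. For the first, \eqref{F_z_T} gives $\eps^2\alpha_i^2=\eps^{-2}d_i^2\theta_{T_i;z}^2\widetilde F_{T_i;z}^2$. Condition ${\mathcal A}1$ combined with $h_z\lesssim\eps$ yields $d_i\lesssim h_z$: under ${\mathcal A}1_{\rm ani}$ the quasi-non-obtuse condition at $z$ forces the longest edge of any anisotropic $T_i$ to lie opposite $z$, so $d_i\sim h_{T_i}\sim h_z$; under ${\mathcal A}1_{\rm mix}$ each $T_i$ is either isotropic (with $d_i\le H_{T_i}\sim h_{T_i}\lesssim h_z$) or falls under the previous case. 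Since $\widetilde F_{T_i;z}$ is either a nodal value of $f_h^I$ or the convex nodal average $\widebar F_z$, its modulus is bounded by a constant times pointwise values of $f_h^I$ on $\bar\omega_z$, and summation over $T_i\subset\omega_z$ produces the $\|h_z\eps^{-1}f_h^I\|_{\omega_z}$ term.

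Finally, I will address consistency of \eqref{tau_n_hat}. For interior $z\notin\partial\Omega$, summing the $N_z$ equations telescopes the $\beta_{i-1}-\beta_i$ contributions to zero, and after reindexing the $\alpha_{i-1}$ sum the closed-polygon identity $|S_i|\bnu_i+|S_i^+|\bnu_i^++|S_i^-|\bnu_i^-=0$ inside each $T_i$ collapses the $\alpha$-contributions to $-\sum_i\alpha_i|S_i|=-2\eps^{-2}\sum_i|T_i|\theta_{T_i;z}\widetilde F_{T_i;z}$. By \eqref{f_bar_z} this equals $-2\eps^{-2}\sum_i|T_i|\theta_{T_i;z}F_{T_i;z}$ regardless of which case of \eqref{F_z_T} applies, and matching with the summed right-hand side $\sum_i|e_i|[\partial_\bnu u_h]_{e_i}$ is precisely the finite element equation \eqref{FEM} at $z$. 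Thus the compatibility condition holds; since the $\beta_i$'s enter only through the differences $\beta_{i-1}-\beta_i$, solutions form a one-parameter family. For boundary $z\in\partial\Omega$, only $N_z-1$ equations constrain $N_z$ unknowns with the same difference-operator structure, so solvability with a free parameter is automatic. The trickiest step will be the jump computation: in particular, the sign bookkeeping in the identities $d_j^{-1}\bmu_j\cdot\bnu_j^\pm=\mp|e|^{-1}$, which relies on carefully fixing the orientation of the ccw vertex ordering in each $T_j$ relative to the ccw sweep around $z$.
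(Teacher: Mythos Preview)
Your arguments for the divergence identity \eqref{tauz_small_h_prop_a}, for the equivalence of \eqref{tau_n} with \eqref{tau_n_hat}, and for the consistency of the $\beta$-system are all correct and match the paper's approach (the paper phrases the closed-polygon identity as $\bnu_i\cdot(|S_i^+|\bnu_i^++|S_i^-|\bnu_i^-)+2|T_i|d_i^{-1}=0$, which is the same thing).

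There is, however, a genuine gap in your argument for \eqref{tauz_small_h_prop_b}. Your claim that ${\mathcal A}1$ forces $d_i\lesssim h_z$ for every $T_i\subset\omega_z$ is false. For an interior anisotropic node $z\in{\mathcal N}_{\rm ani}$, the patch $\omega_z$ necessarily contains ``middle'' triangles $T_i$ in which both edges at $z$ are long (see Fig.~\ref{fig_local_kappa} left, triangles $T_2,T_3$); for such $T_i$ the opposite edge $S_i$ is the short edge and $d_i=2|T_i|/|S_i|\sim H_z$, not $h_z$. The quasi-non-obtuse condition ${\mathcal A}1_{\rm ani}$ only bounds the angle at $z$ from \emph{above}; it does not prevent that angle from being small ($\sim h_z/H_z$), which is exactly what happens here. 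This is confirmed explicitly in the proof of Lemma~\ref{lem_beta}, part~(b): ``$d_i\sim h_z$ only for $i=0,1,m,m+1$ and $d_i\sim H_z$ otherwise.''

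The correct mechanism is the quadrature weight, not the mesh geometry. Since $\alpha_i=\eps^{-2}d_i\theta_{T_i;z}\widetilde F_{T_i;z}$, it suffices to show $d_i\theta_{T_i;z}\lesssim h_z$. When $d_i\not\lesssim h_z$, necessarily $z=z^*_{T_i}$ (the vertex opposite the shortest edge) and $T_i$ is anisotropic with $h_{T_i}\le h_z\lesssim\eps$; hence $T_i\in{\mathcal T}^*$ by \eqref{Tcal_star}, and then \eqref{Q_T} gives $\theta_{T_i;z}=0$, so $\alpha_i=0$. This is the role of the anisotropic quadrature in the construction, and the lemma holds without invoking ${\mathcal A}1$ at all.
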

\smallskip

\begin{proof}
Combining ${\rm div}(\phi_z \bmu_i)=\nabla\phi_z\cdot \bmu_i=0$  with ${\rm div}(\phi_z \bnu_i)=\nabla\phi_z\cdot \bnu_i=-d_i^{-1}$
one gets
${\rm div}\btauz+\alpha_i d_i^{-1}=0$ in $T_i\subset\omega_z$, which immediately implies \eqref{tauz_small_h_prop_a}.
For \eqref{tauz_small_h_prop_b}, note that
$d_i\theta_{T_i;z}\lesssim h_z\theta_{T_i;z}$, because, in view of \eqref{Q_T},\,\eqref{Tcal_star}, unless $d_i\lesssim h_z$, one has $T_i\in{\mathcal T}^*$ and $z=z^*_{T_i}$ so $\theta_{T_i;z}=0$.
Now, $|\eps\alpha_i|
\lesssim (h_z \eps^{-1})\, \theta_{T_i;z}|\widetilde F_{T_i;z}|$.
Combining this with \eqref{F_z_T} and \eqref{f_bar_z}, one gets \eqref{tauz_small_h_prop_b}

Next, note that \eqref{tau_n},
combined with \eqref{tau_z}, is equivalent to
$$
\bigl(\alpha_{i-1}\bnu_{i-1}+\beta_{i-1}d_{i-1}^{-1}\bmu_{i-1}\bigr)\cdot\bnu^+_{i-1}
+ \bigl(\alpha_{i}\bnu_{i}+\beta_{i}d_i^{-1}\bmu_{i}\bigr)\cdot\bnu^-_{i}
= \bigl[\partial_\bnu u_h\bigr]_{\pt T_{i-1}\cap \pt T_i}\,.
$$
Multiplying this  by $|S_{i-1}^+|=|S_i^-|$ and noting that
$d_i=\bmu_{i}\cdot|S_{i}^+|\bnu^+_{i}=-\bmu_{i}\cdot|S_i^-|\bnu^-_{i}$, one gets \eqref{tau_n_hat}.
So \eqref{tau_n_hat} is, indeed, equivalent to  \eqref{tau_n}.

Finally, consider the system \eqref{tau_n_hat} for
$\{\beta_i\}_{i=1}^{N_z}$.
{\color{blue}For this system to be consistent, it suffices to show that it is under-determined
(as then, taking any specific $\beta_1$, one can uniquely compute all other $\{\beta_i\}$).}
For $z\in\pt\Omega$,
there are $N_z-1$ equations, so
this system  is clearly under-determined.
For $z\not\in\pt\Omega$, this is also the case as an application of $\sum_{i=1}^{N_z}$ to \eqref{tau_n_hat} yields $0$.
To check the latter,
one first employs the observation that
$\bnu_i\cdot\bigl(|S_i^+|\bnu^+_i +|S_{i}^-|\bnu^-_{i}\bigr)+2|T_i|d_i^{-1}=0$, and then recalls \eqref{F_z_T}, as well as \eqref{FEM} and \eqref{f_bar_z}.
\end{proof}
\medskip

\begin{remark}[Anisotropic flux equilibration]%
\label{rem_beta}
The choice of a particular solution $\{\beta_i\}$  of \eqref{tau_n_hat} is crucial, as
 our estimator, roughly speaking, involves the component
  $\sum_{i=1}^{N_z} \beta^2_i d_i^{-2} |T_i|$ from \eqref{tauz_small_h_prop_b},
while, unless the mesh is shape-regular, $d_i^{-2} |T_i|=\frac14|S_i|^2|T_i|^{-1}$ may vary very significantly within $\omega_z$.
One simple and useful approach is to minimize this component, i.e.
given any particular solution $\{\hat\beta_i\}$ of \eqref{tau_n_hat}, let 
\beq\label{beta_choice_case1}
\beta_i:=\hat\beta_i-C_z,\qquad
\mbox{where}\quad
\sum_i(\hat\beta_i-C_z)d_i^{-2} |T_i|=0.
\vspace{-2pt}
\eeq
(Alternatively, one can set $\beta_i:=0$ for the element $T_i$ with the largest $d_i^{-2} |T_i|$ within~$\omega_z$,
or choose $\{\beta_i\}$ as in the proof of Lemma~\ref{lem_beta}.)
\end{remark}
\medskip

\begin{remark}[Computing $\btauz$ via optimization]\color{blue}
More generally, for $h_z\lesssim \eps$, one can construct
$\btauz$ using \eqref{tau_z} in which $\{\alpha_i\}$ and $\{\beta_i\}$
are chosen to minimize
\beq\label{optima}
\sum_{i=1}^{N_z} \Bigl({\textstyle\frac16}\eps^2\alpha_i^2+{\textstyle\frac16}\eps^2 \beta^2_i d_i^{-2} +[\theta_{T;z} F_{T;z}-\eps^2\alpha_i d_i^{-1}]^2\Bigr)|T_i|
\qquad\mbox{subject to \eqref{tau_n_hat}}.\vspace{-0.1cm}
\eeq
(Here we in fact minimize $\|\eps\btauz\|^2_{\omega_z}+\|\eps^2{\rm div}\btauz+\theta_{T;z} F_{T;z}\|^2_{\omega_z}$, while \eqref{F_z_T} is dropped).
Then 
Theorem~\ref{theo_main_bounds} remains valid
(in view of the results in \S\S\ref{subsec_51}--\ref{sec_sigma} and \S\ref{sec_A1_star}).
\end{remark}
\smallskip

\begin{remark}\color{blue}
It is assumed throughout this section that
any $z\in{\mathcal N}_{\rm ani}$ with $h_z\lesssim \eps$ also satisfies $\omega_z\subset {\mathcal T}^*$.
This is consistent with the definition \eqref{Tcal_star} of ${\mathcal T}^*$ as long as the somewhat imprecise
condition $h_T\lesssim\eps$ used in \eqref{Tcal_star} always follows from 
$h_z\lesssim\eps$.
\end{remark}
\smallskip

\subsection{Proof of \eqref{theo_div_bound} and \eqref{tau_z_main} in Theorem~\ref{theo_main_bounds}(i) for $h_z\lesssim \eps$}\label{subsec_51}
Our findings in this section are presented as two lemmas.
\smallskip

\begin{lemma}\label{cor_case1}
Let $\{\btauz\}$ satisfy \eqref{tau_z},\,\eqref{F_z_T} for any $ z\in\mathcal N$ with $h_z\lesssim \eps$.
Then for any $ z\in{\mathcal N}$, there is a function $g_z $ with support on $\omega_z$ that satisfies \eqref{theo_div_bound} and \eqref{tau_z_main}.%
\end{lemma}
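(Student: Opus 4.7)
The plan is to define $g_z$ explicitly on each element so that \eqref{theo_div_bound} becomes a pointwise identity, and then to bound $\|g_z\|_{\omega_z}$ case by case using the components already available. First, I would combine \eqref{tauz_small_h_prop_a} from Lemma~\ref{lem_tau_case1} (which gives $\eps^2{\rm div}\btauz|_T=-\theta_{T;z}\widetilde F_{T;z}$ whenever $z\in T$ and $h_z\lesssim\eps$) with the elementary identity $\widebar f_h|_T=\sum_{z\in T}\theta_{T;z}F_{T;z}$ (immediate from \eqref{f_bar}, \eqref{Q_T} and the definition of $F_{T;z}$ given in \eqref{FEM}). These combine on each $T\in\mathcal T$ to give
\begin{equation*}
\Bigl(\widebar f_h + \sum_{z:\,h_z\lesssim\eps}\eps^2{\rm div}\btauz\Bigr)\Big|_T
= \sum_{\substack{z\in T \\ h_z\lesssim\eps}}\theta_{T;z}(F_{T;z}-\widetilde F_{T;z}) + \sum_{\substack{z\in T \\ h_z\gtrsim\eps}}\theta_{T;z}F_{T;z},
\end{equation*}
suggesting that I define $g_z|_T$ as the corresponding vertex summand when $z\in T$, and zero otherwise. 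The support condition and \eqref{theo_div_bound} are then immediate.

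To bound $\|g_z\|_{\omega_z}$ I split into cases matching the terms on the right of \eqref{tau_z_main}. When $h_z\gtrsim\eps$, one has $|g_z|\lesssim|f_h^I(z)|$ and the piecewise linearity of $f_h^I$ give $\|g_z\|_{\omega_z}\lesssim\|f_h^I\|_{\omega_z}$, matching $\min\{1,h_z\eps^{-1}\}=1$. When $h_z\lesssim\eps$ with either $z\notin{\mathcal N}_{\rm ani}$ or $H_z\lesssim\eps$, definition \eqref{F_z_T} forces $\widetilde F=F$ and hence $g_z\equiv 0$. The only interior case requiring work is $h_z\lesssim\eps$, $z\in{\mathcal N}_{\rm ani}$, $H_z\gtrsim\eps$, for which $\omega_z\subset{\mathcal T}^*$ yields $g_z|_T=\frac12(\widebar f_{h;T}-\widebar F_z)$ and $\lambda_T\sim 1$. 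For the boundary contribution the only non-trivial situation is $z\in{\mathcal N}^*_{\pt\Omega}$ with $H_z\gtrsim\eps$ (since $z\in{\mathcal N}_{\rm ani}$ with $H_z\lesssim\eps$ again forces $\widetilde F=F$ through the final clause of \eqref{F_z_T}); here I would split $\widebar f_{h;T}=f_h(z)+(\widebar f_{h;T}-f_h(z))$ and absorb the first piece into $\|\lambda_T f_h(z)\|_{\omega_z}$ using $\lambda_T\sim 1$.

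The hard part will be the interior case $g_z|_T=\frac12(\widebar f_{h;T}-\widebar F_z)$, where a cell-wise deviation from a weighted patch average must be controlled by local oscillations. The plan is to express $\widebar F_z$ as the explicit convex combination of the $\widebar f_{h;T'}$ with weights $\theta_{T';z}|T'|$ from \eqref{f_bar_z}, and then for each pair $T,T'\subset\omega_z$ exploit that they share the vertex $z$ to reduce $\widebar f_{h;T}-\widebar f_{h;T'}$ to differences of $f_h^I$-values at the other vertices, each bounded by ${\rm osc}(f_h^I;T'')$ along a short chain of triangles connecting $T$ to $T'$. Uniform boundedness of $N_z$ combined with $|T|\sim|\omega_z|/N_z$ under \eqref{ani_node} and Cauchy--Schwarz then yield $\|g_z\|_{\omega_z}^2\lesssim\sum_{T\subset\omega_z}|T|\,{\rm osc}(f_h^I;T)^2$, which is the required bound since $\lambda_T\sim 1$. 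A little care is needed for triangles $T\subset\omega_z$ with $z=z^*_T$ (so $\theta_{T;z}=0$): they contribute trivially to both $g_z$ and $\widebar F_z$, but must not disconnect the chains used in the oscillation argument, which is where the fan-like structure of an anisotropic patch and the bounded-valence assumption will be used.
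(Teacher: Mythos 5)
Your proposal is correct and follows the same route as the paper: define $g_z|_T=\theta_{T;z}(F_{T;z}-\widetilde F_{T;z})$ (resp.\ $\theta_{T;z}F_{T;z}$), derive \eqref{theo_div_bound} from $\sum_{z\in\pt T}\theta_{T;z}F_{T;z}=\widebar f_h$, and bound $\|g_z\|_{\omega_z}$ case by case via $|F_{T;z}-\widebar F_z|\lesssim{\rm osc}(f_h^I;\omega_z)+\mathbbm{1}_{z\in{\mathcal N}^*_{\pt\Omega}}|f_h(z)|$ combined with $|T|\sim|\omega_z|$ for $z\in{\mathcal N}_{\rm ani}$. Your closing worry about the oscillation chains being disconnected by $\theta_{T;z}=0$ triangles is unnecessary: every $T\subset\omega_z$ contains the common vertex $z$, so $|\widebar f_{h;T}-\widebar f_{h;T'}|\le|\widebar f_{h;T}-f_h(z)|+|f_h(z)-\widebar f_{h;T'}|\le{\rm osc}(f_h^I;T)+{\rm osc}(f_h^I;T')$ without any chain through edge-adjacent elements.
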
%
\smallskip

\begin{proof}
In any $T\subset\omega_z$, let $g_z:=\eps^2{\rm div}\btauz+\theta_{T;z} F_{T;z}$ if $h_z\lesssim \eps$, and $g_z:=\theta_{T;z} F_{T;z}$ otherwise.
In view of \eqref{Q_T},\,\eqref{FEM}, $\sum_{{\mathcal N}\cap z\in\pt T } \theta_{T;z} F_{T;z}=\widebar f_{h;T}=\widebar f_h$ in any $T\in\mathcal T$, so the first assertion \eqref{theo_div_bound} follows.
Next, if $h_z \gtrsim \eps$, i.e. $\min\{1,\,h_z\eps^{-1}\}\sim 1$, one immediately gets $\|g_z\|_{\omega_z}\lesssim \min\{1,\,h_z\eps^{-1}\}\|f_h^I\|_{\omega_z}$.
Otherwise, i.e. if $h_z\lesssim \eps$,
{\color{blue}a version of}
\eqref{tauz_small_h_prop_a} implies
{\color{blue} $g_z=\theta_{T;z} F_{T;z}-\eps^2\alpha_i d_i^{-1}=\theta_{T;z} (F_{T;z}-\widetilde F_{T;z})$, and so}
$|g_z|\le |F_{T;z}-\widetilde F_{T;z}|$
for any $T\subset\omega_z$.
By \eqref{F_z_T}, unless $g_z=0$, one has $|g_z|\le |F_{T;z}-\widebar F_z|\le {\rm osc}(f_h^I;\omega_z)+\color{blue}\mathbbm{1}_{z\in{\mathcal N}^*_{\pt\Omega}}f_h(z)$, where we also used \eqref{f_bar_z}.
At the same time,
$H_z\gtrsim \eps$ implies $1\sim \min\{1,\,H_z \eps^{-1}\}\sim \lambda_T$ (as $z\in{\mathcal N}_{\rm ani}$ so $H_z\sim H_T$).
Combining these observations with $|T|\sim|\omega_z|$ for any $T\subset\omega_z$ yields
$\|g_z\|_{\omega_z}\lesssim {}$
{\color{blue}the second line in \eqref{tau_z_main}.}
\end{proof}
\medskip

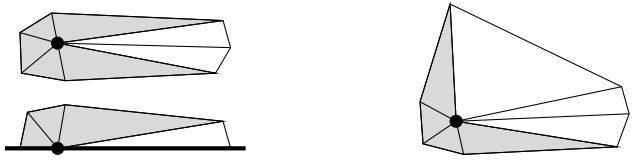
\begin{figure}[t!]
~\hfill
\begin{tikzpicture}[scale=0.20]
\path[draw, fill=gray!30!white] (20,5)--(10,4.5)--(7.1,5)--(7,7.7)--(9.1,9)--(20.5,8.5)--(9.5,7)--cycle;
\draw[fill] (9.5,7) circle [radius=0.4];
\path[draw]  (10,4.5)--(20,5)--(21,6.7)--(20.5,8.5)--(9.1,9)--(7,7.7)--(7.1,5)--cycle;
\path[draw] (10,4.5)--(9.5,7)--(20,5);
\path[draw] (21,6.7)--(9.5,7)--(20.5,8.5);
\path[draw] (9.1,9)--(9.5,7)--(7,7.7);
\path[draw] (7.1,5)--(9.5,7);
%
\path[draw, fill=gray!30!white] (7,0)--(7.5,2.4)--(10,2.9)--(20.5,1.8)--(9.5,0)--cycle;
\draw[fill] (9.5,0) circle [radius=0.4];
\path[draw]  (7,0)--(7.5,2.4)--(10,2.9)--(20.5,1.8)--(21,0)--cycle;
\path[draw] (7.5,2.4)--(9.5,0)--(20.5,1.8);
\path[draw] (10,2.9)--(9.5,0);
\path[draw,ultra thick
] (6,0)--(22,0);
\end{tikzpicture}
\hfill
\begin{tikzpicture}[scale=0.20]
\path[draw, fill=gray!30!white] (10,0)--(20.2,0.5)--(9.5,2.2)--(9.1,10)--(7.1,3.5)--(7.3,0.7)--cycle;
\draw[fill] (9.5,2.2) circle [radius=0.4];
\path[draw]  (10,0)--(20.2,0.5)--(21,2.7)--(20.5,4.5)--(9.1,10)--(7.1,3.5)--(7.3,0.7)--cycle;
\path[draw] (10,0)--(9.5,2.2)--(20.2,0.5);
\path[draw] (21,2.7)--(9.5,2.2)--(20.5,4.5);
\path[draw] (9.1,10)--(9.5,2.2)--(7.1,3.5);
\path[draw] (7.3,0.7)--(9.5,2.2);
\end{tikzpicture}
\hfill~
\caption{Examples of nodes that satisfy ${\mathcal A}1_{\rm mix}$
with the set $\{T_i\}_{i=1}^n$ (used in the proof of Lemma~\ref{lem_beta}) highlighted by the grey color:
$n=5$ (top left), $n=3$ (bottom left), $n=4$ (right).
}
\label{fig_node_typesA_mixed}
\end{figure}

\begin{lemma}\label{lem_beta}
Under condition ${\mathcal A}1$, for any $z\in\mathcal N$ with $h_z\lesssim \eps$, there is a solution $\{\beta_i\}_{i=1}^{N_z}$ of \eqref{tau_n_hat}
such that $\btauz$ defined by \eqref{tauz_h_small} satisfies \eqref{tau_z_main}.
\end{lemma}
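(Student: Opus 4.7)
By Lemma~\ref{lem_tau_case1}, any $\{\beta_i\}$ solving \eqref{tau_n_hat} yields $\btauz$ obeying \eqref{tauz_small_h_prop_b}, and Lemma~\ref{cor_case1} already establishes the $g_z$-part of \eqref{tau_z_main}. Since $h_z\lesssim\eps$, the first term in \eqref{tauz_small_h_prop_b} is absorbed into the contribution $\min\{1,h_z\eps^{-1}\}\|f_h^I\|_{\omega_z}$ of \eqref{tau_z_main}, so the task reduces to choosing $\{\beta_i\}$ so that
\begin{equation*}
\eps\Bigl\{\textstyle\sum_{i=1}^{N_z}\beta_i^2 d_i^{-2}|T_i|\Bigr\}^{1/2}
\end{equation*}
is bounded by the right-hand side of \eqref{tau_z_main}. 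The plan is to rewrite \eqref{tau_n_hat} as $\beta_i=\beta_{i-1}+r_i$ with
\begin{equation*}
r_i:=|S_i^-|\bigl[\pt_\bnu u_h\bigr]_{\pt T_{i-1}\cap\pt T_i}-\alpha_{i-1}\bnu_{i-1}\cdot|S_{i-1}^+|\bnu^+_{i-1}-\alpha_i\bnu_i\cdot|S_i^-|\bnu^-_i,
\end{equation*}
anchor the recurrence at a carefully chosen index, and bound $|r_i|$ termwise using $|[\pt_\bnu u_h]|\le J_z$ and $|\alpha_k|\lesssim h_z\eps^{-2}\theta_{T_k;z}|\widetilde F_{T_k;z}|$ (from the proof of Lemma~\ref{lem_tau_case1}).

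Under ${\mathcal A}1_{\rm mix}$ I would exploit the connectedness of $(\mathring{\omega}_z\cup\mathring{\mathcal S}_z)\setminus\{z\}$ to renumber the $T_i$ so that the grey subset $\{T_i\}_{i=1}^n$ in Fig.\,\ref{fig_node_typesA_mixed} forms a contiguous block of isotropic elements (together with their connecting edges) on which $d_i^{-2}|T_i|\sim 1$. Anchoring $\beta_1=0$ inside this block and applying the shift $C_z$ of \eqref{beta_choice_case1} afterwards only sharpens the estimate. Within the block each $|\beta_i|$ accumulates a bounded number of increments of size $|r_i|\lesssim h_z J_z+(h_z\eps^{-1})^2\|f_h^I\|_{L_\infty(\omega_z)}$, giving an $O(1)$-weighted contribution dominated by $\|\eps J_z\|_{\omega_z}^2+\|h_z\eps^{-1}f_h^I\|_{\omega_z}^2$. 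Every remaining anisotropic triangle is reached from the block in $O(1)$ steps along edges of length $\lesssim h_z$, so its $|\beta_i|$ is again controlled by a bounded number of such $|r_k|$'s, and the large weight $d_i^{-2}|T_i|\sim H_z/h_z$ is compensated because $\beta_i^2(H_z/h_z)|T_i|\sim h_z H_z J_z^2\sim J_z^2|\omega_z|$.

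Under ${\mathcal A}1_{\rm ani}$ every $T_i$ is anisotropic with $d_i^{-2}|T_i|\sim H_z/h_z$ uniformly, so the argument is more delicate and rests on the quasi-non-obtuse bound on the angle at $z$. Combined with the maximum-angle condition at the non-$z$ vertices, that bound forces the cosines $\bnu_{i-1}\cdot\bnu^+_{i-1}$ and $\bnu_i\cdot\bnu^-_i$ entering $r_i$ to be $O(h_z/H_z)$, which absorbs the long-edge factors $|S_{i-1}^+|,|S_i^-|\sim H_z$ multiplying $\alpha_{i-1},\alpha_i$. One thus obtains $|r_i|\lesssim h_z J_z+h_z^2\eps^{-2}\bigl(\|f_h^I\|_{L_\infty(\omega_z)}+{\rm osc}(f_h^I;\omega_z)\bigr)$, plus, when $z\in{\mathcal N}^*_{\pt\Omega}$, an additional $h_z\eps^{-2}H_z|f_h(z)|$ arising from the mismatch between $\widetilde F_{T;z}$ and $F_{T;z}$ in \eqref{F_z_T},\,\eqref{f_bar_z}. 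Anchoring at a boundary edge when $z\in\pt\Omega$, or applying the shift \eqref{beta_choice_case1} together with the closing identity $\sum_i r_i=0$ verified for interior nodes inside the proof of Lemma~\ref{lem_tau_case1}, keeps $|\beta_i|$ of this same order. Squaring and summing with $|\omega_z|\sim h_z H_z$ reproduces the right-hand side of \eqref{tau_z_main}, the $\lambda_T\sim H_z/\eps$ weights emerging naturally under the current hypothesis $h_z\lesssim\eps$ and the anisotropic geometry $H_z\sim H_T$.

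The principal obstacle is the ${\mathcal A}1_{\rm ani}$ case: the quasi-non-obtuse cancellation must be tracked through every step of the recurrence, and in the interior-node situation the closing identity has to be combined with the shift \eqref{beta_choice_case1} so as to avoid picking up a parasitic factor $H_z/h_z$; without either ingredient the bound would fail. The boundary subcase $z\in{\mathcal N}^*_{\pt\Omega}$ requires separate bookkeeping so that the additional $|f_h(z)|$ contribution in \eqref{tau_z_main} appears with the sharp weight $\lambda_T$ rather than the cruder $h_z\eps^{-1}$.
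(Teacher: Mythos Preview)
Your treatment of the ${\mathcal A}1_{\rm mix}$ case is close to the paper's argument, though with some imprecision: once you leave the isotropic block, the connecting edges $S_i^-$ have length $\sim H_z$, not $\lesssim h_z$; what makes the recursion work is that for those anisotropic triangles $d_i\sim H_z$ as well, so $|S_i^-|\sim d_i\gtrsim d_{i-1}$ and the step estimate \eqref{beta_aux} still applies.

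The genuine gap is in the ${\mathcal A}1_{\rm ani}$ case. Your central claim---that the quasi-non-obtuse bound forces $\bnu_{i-1}\!\cdot\!\bnu^+_{i-1}$ and $\bnu_i\!\cdot\!\bnu^-_i$ to be $O(h_z/H_z)$---is false. For instance, in the triangle $T_m$ (adjacent to a short edge, so $d_m\sim h_z$), the edge $S_m^-$ is long and the angle opposite the short side $S_m^+$ is $\sim h_z/H_z$, whence $\bnu_m\!\cdot\!\bnu_m^-=-\cos(\angle A)\approx -1$. The naive recursion then gives $|\beta_m d_m^{-1}|\lesssim (H_z/h_z)|J_z|+\dots$, a full aspect-ratio factor too large, and the optimal shift \eqref{beta_choice_case1} cannot repair this because it leaves $\beta_0-\beta_m$ unchanged.

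What the paper actually does is sum \eqref{tau_n_hat} over $i=1,\dots,m$ to obtain \eqref{beta_0_m}, which expresses $\beta_0-\beta_m$ in terms of the quantity $\widetilde\sigma_z$ of \eqref{sigma_def}. The crucial step is the separate technical Lemma~\ref{lem_jump}, whose proof exploits ${\mathcal A}1_{\rm ani}$ (via local $(\xi,\eta)$ coordinates and the finite-element equation \eqref{FEM}) to show $|\sigma_z|\lesssim h_z|J_z|+h_z^2\eps^{-2}|\widebar F_z|$. This cancellation is the real content of the ${\mathcal A}1_{\rm ani}$ hypothesis here, and your argument does not capture it; the ``closing identity $\sum_i r_i=0$'' is necessary for consistency of the system but is not by itself sufficient to control $|\beta_0-\beta_m|$.
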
%
\smallskip%

\begin{proof}
Our task is to show that $\|\eps\btauz\|_{\omega_z}$ satisfies \eqref{tau_z_main}, in which the right-hand side involves
$\color{blue}\min\{1,\,\eps h_z^{-1}\}\sim 1$ and
$\min\{1,\,h_z\eps^{-1}\}\sim h_z\eps^{-1}$.
So it suffices to prove 
\beq
\|\eps\btauz\|_{\omega_z}\lesssim \|\eps J_z\|_{\omega_z}\!+h_z \eps^{-1}\|f^I_h\|_{\omega_z}+\!\!\sum_{T\subset\omega_z}\!\!\lambda_T\|{\rm osc}(f_h^I;T)\|_{T}
+\mathbbm{1}_{z\in{\mathcal N}^*_{\pt\Omega}}\,\|\lambda_T f_h(z)\|_{\omega_z}.
\label{eq_lem_beta}
\eeq

An inspection of the proof of Lemma~\ref{lem_tau_case1} reveals that if $\widetilde F_{T;z}=F_{T;z}$ in \eqref{F_z_T},~then
$$
\sum_{T_i\subset\omega_z}\|\eps\alpha_i\|^2_{\omega_z} \lesssim
|\omega_z|\,|h_z\eps^{-1} f_h(z)|^2 +|\omega_z|\!\sum
_{\textstyle{T_i\in{\mathcal T}^*:{}\above 0pt d_i\sim h_{T_i}}}
\!\!|d_i\eps^{-1}\widebar f_{h;T}|^2
\lesssim
\sum_{T\subset\omega_z}\|h_z\eps^{-1}f^I_h\|^2_{T},
$$
where
for the first relation we used \eqref{FEM} combined with \eqref{Q_T},\,\eqref{Tcal_star}, and for the second,
 $|\omega_z|\sim h_zH_z$ and $h_{T_i} H_z\sim |T_i|$ for any $T_i\in{\mathcal T}^*$.
One gets a similar conclusion for
the case $\widetilde F_{T;z}=\widebar F_{z}$ in \eqref{F_z_T} (in fact, the latter case is more straightforward as then $z\in{\mathcal N}_{\rm ani}$
so $|T_i|\sim |\omega_z|$ for any $T_i\subset\omega_z$).
Now, in view of \eqref{tauz_small_h_prop_b},
 to get the desired assertion
 {\eqref{eq_lem_beta}}, it suffices to show that
\beq\label{case1_beta_desired}
|\beta_i d_i^{-1}|\lesssim |J_z|+\max_{j=1,\ldots,N_z}|\alpha_j|+|\hat\sigma_z|,
\eeq
with some $\hat\sigma_z$ such that $\|\eps\hat\sigma_z\|_{\omega_z}$ satisfies a version of {\eqref{eq_lem_beta}}.

For \eqref{case1_beta_desired}, we start with a straightforward observation that follows from~\eqref{tau_n_hat}:
\beq\label{beta_aux}
\mbox{if~~}|S_i^-|\sim d_i \gtrsim d_{i-1}
\quad\Rightarrow\quad
|\beta_id_i^{-1}|\lesssim |\beta_{i-1}d_{i-1}^{-1}|+|\alpha_{i-1}|+|\alpha_i|+|J_z|.
\eeq
Consider three cases (a), (b) and (c).

(a) Suppose that $z$ satisfies ${\mathcal A}1_{\rm mix}$.
Then the $N_z$ triangles in $\omega_z$ can be numbered counterclockwise so that the set
$\{T_i\}_{i=1}^{n}\neq\emptyset$, with some $n=n_z\le N_z$,
is formed by all triangles having at least one edge in $\mathring{S}_z$ (see Fig.\,\ref{fig_node_typesA_mixed}).
To be more precise, this set will include
all triangles from $\mathring{\omega}_z$, and, possibly, one or two anisotropic triangles  that  either share an edge with $\mathring{\omega}_z\neq\emptyset$
or, if $\mathring{\omega}_z=\emptyset$ and so $\mathring{S}_z$ includes a single edge, touch this edge.
Note that then $d_i\sim \mathring{h}_z$ for $i=1,\ldots, n_z$ and $d_i\sim H_z$ for $i>n_z$, while $|S_i^-|\sim \mathring{h}_z$ for $i=2,\ldots, n_z$.
%
%
So setting $\beta_1:=0$ and applying \eqref{beta_aux} for $i>1$, we arrive at
\eqref{case1_beta_desired} with $\hat\sigma_z:=0$.
\smallskip

(b) Next, consider $z\in {\mathcal N}_{\rm ani}\backslash\pt\Omega$ that satisfies~${\mathcal A}1_{\rm ani}$ (and so not~${\mathcal A}1_{\rm mix}$).
Then $\mathring{\mathcal S}_z$ includes exactly two edges of length${}\sim {\mathring h}_z\sim h_z$.
Let the triangles $\{T_i\}_{i=1}^N$ forming the patch $\omega_z$
be numbered counterclockwise so that $\mathring{\mathcal S}_z=\{\pt T_{i-1}\cap \pt T_i\}_{i=1,m+1}$, for some $m=m_z\le N_z-2$
(see Fig.\,\ref{fig_local_kappa} (left)).

Note that $d_i\sim h_z$ only for $i=0,1,m,m+1$ and $d_i\sim H_z$ otherwise, while
$|S_i^-|\sim h_z$ for $i=1,m$ and $\sim H_z$ otherwise.
Hence,
one can employ \eqref{beta_aux} for $i\neq 0,m$. So it remains to get the desired bound \eqref{case1_beta_desired} only
 for $i= 0,m$.
For this, let
\beq\label{sigma_def}
\widetilde\sigma_z:=
\sum_{i=2}^{m} |S_{i}^-|\bigl[\partial_\bnu u_h\bigr]_{\pt T_{i-1}\cap \pt T_i}+2\eps^{-2}\sum_{i=1}^m\theta_{T_i;z}|T_i|\,\widetilde F_{T_i;z}
\eeq
(compare with \eqref{FEM}).
Now, an application of $\sum_{i=1}^m$ to \eqref{tau_n_hat} (and also noting that $\bnu_i\cdot\bigl(|S_i^+|\bnu^+_i +|S_{i}^-|\bnu^-_{i}\bigr)+2|T_i|d_i^{-1}=0$) yields
\beq\label{beta_0_m}
\beta_{0}-\beta_{m}
+\alpha_{0}\bnu_{0}\cdot|S_{0}^+|\bnu^+_{0}
-\alpha_{m}\bnu_{m}\cdot|S_{m}^+|\bnu^+_{m}
=
|S_{1}^-|\bigl[\partial_\bnu u_h\bigr]_{\pt T_{0}\cap \pt T_1}+\widetilde\sigma_z.
\eeq
So,
 for example, one can set $\beta_0:=0$ and compute and then estimate $\beta_m$ from \eqref{beta_0_m}. Or, one can choose
 $\beta_0$ and $\beta_m$, in agreement with \eqref{beta_0_m}, but in a more balanced way.
 Importantly, one can ensure for $i=0,m$ that $|\beta_i d_i^{-1}|\lesssim |\alpha_i|+|J_z|+ h_z^{-1}|\widetilde\sigma_z|$.
Consequently,
we get \eqref{case1_beta_desired} for all $i$ with $\hat\sigma_z:=h_z^{-1}\widetilde\sigma_z$.

Finally, similarly to \eqref{f_bar_z}, define a version of \eqref{sigma_def}:
\beq\label{sigma_def__}
\sigma_z:=
\sum_{i=2}^{m} |S_{i}^-|\bigl[\partial_\bnu u_h\bigr]_{\pt T_{i-1}\cap \pt T_i}+2\eps^{-2}\widebar F_{z}\sum_{i=1}^m\theta_{T_i;z}|T_i|.
\eeq
By \eqref{F_z_T}, unless $\widetilde\sigma_z=\sigma_z$,
one has
$\widetilde F_{T_i;z}\neq\widebar F_{z}$ and so $H_z\eps^{-1}\sim \min\{1,\,H_z\eps^{-1}\}\sim \lambda_T$ (the latter is also because $z\in{\mathcal N}_{\rm ani}$),
so $\eps h_z^{-1}|\widetilde\sigma_z-\sigma_z|\lesssim \sum_{T\subset\omega_z}\lambda_T\,{\rm osc}(f_h^I;T)$.
Combining this with a technical result
\eqref{bound_lem_jump_A1} (obtained below in \S\ref{sec_sigma}), one arrives at
\beq\label{hat_sigma}
|\eps h_z^{-1}\widetilde\sigma_z|=
|\eps\hat\sigma_z|\lesssim |\eps J_z|+h_z\eps^{-1}|\widebar F_z|
+\sum_{T\subset\omega_z}\lambda_T\,{\rm osc}(f_h^I;T).
\eeq
As $\|\widebar F_z\|_{\omega_z}\lesssim \|f_h^I\|_{\omega_z}$ (by \eqref{FEM},\,\eqref{f_bar_z}), so we have again obtained \eqref{case1_beta_desired} with
 $\|\eps\hat\sigma_z\|_{\omega_z}$ now satisfying a version of {\eqref{eq_lem_beta}}.
 This completes the proof of {\eqref{eq_lem_beta}} for this case.

\newpage
(c) It remains to consider $z\in{\mathcal N}^*_{\pt\Omega}$, which satisfies~${\mathcal A}1_{\rm ani}$ but not~${\mathcal A}1_{\rm mix}$.
This case is similar to case~(b), with
a version of \eqref{beta_0_m} becoming
$$
\beta_{1}-\beta_{m}-\alpha_{1}\bnu_{1}\cdot|S_1^-|\bnu^-_{1}
-\alpha_{m}\bnu_{m}\cdot|S_{m}^+|\bnu^+_{m}
=\widetilde\sigma_z.
$$
Again, using \eqref{bound_lem_jump_A1}, we get a version of \eqref{hat_sigma} with an additional term
$H_z\eps^{-1}|\widebar F_z|$
in the right-hand side.
As, by \eqref{f_bar_z},
{\color{blue}unless $\widebar F_z=0$, one has
$\widebar F_z=f_h(z)$
and $H_z\eps^{-1}\sim\min\{1,\,H_z\eps^{-1}\}\sim \lambda_T$
for $z\in{\mathcal N}^*_{\pt\Omega}$,
we again get {\eqref{eq_lem_beta}}.}
%
%
%
\end{proof}
\medskip

\subsection{Estimation of $\sigma_z$}\label{sec_sigma}
Here we give one technical result on $\sigma_z$. 
Throughout this section, we use the notation from the proof of Lemma~\ref{lem_beta}.
\smallskip

\begin{lemma}\label{lem_jump}
(i) If $z\in {\mathcal N}_{\rm ani}\backslash\pt\Omega$, with $h_z\lesssim \eps$,  satisfies~${\mathcal A}1_{\rm ani}^*$,
then for $\sigma_z$ of~\eqref{sigma_def__} one has
\beq\label{bound_lem_jump}
\eps h_z^{-1}\bigg|\sigma_z-
\!\!\!\sum_{i=1,m+1}
\bmu_{i-1}^+\cdot{\mathbf i}_\xi\,\,
\frac{H_{T_{i-1}}H_{T_i}}{H_{T_{i-1}}+H_{T_i}} \,
[\pt_\bnu u_h]_{\pt T_{i-1}\cap \pt T_i}
\bigg|\lesssim|\eps J_z|+h_z\eps^{-1}|\widebar F_{z}|,
\eeq
where $\mathbf{i}_\xi$ is the unit vector that points from $z$ in the direction of any edge from $\{S_i^-\}_{i=2}^m$.%
\smallskip

\noindent
(ii) If $z\in {\mathcal N}_{\rm ani}\backslash\{\mbox{corners of~}\Omega\}$, with $h_z\lesssim \eps$,  satisfies ${\mathcal A}1_{\rm ani}$ but not~${\mathcal A}1_{\rm mix}$,
then
\beq\label{bound_lem_jump_A1}
\eps h_z^{-1}|\sigma_z|\lesssim |\eps J_z|+h_z\eps^{-1}|\widebar F_z|
+\mathbbm{1}_{z\in{\mathcal N}^*_{\pt\Omega}}\,\,H_z\eps^{-1}|\widebar F_z|.
\eeq
\end{lemma}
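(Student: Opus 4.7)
The naive estimate $|\sigma_z| \lesssim \sum_{i=2}^m |S_i^-|\,|J_z| \sim H_z|J_z|$ is too large by a factor $H_z/h_z$, so the key to both parts is a telescoping/cancellation argument. My plan is to combine the discrete equation \eqref{FEM} at $z$ with the elementwise divergence identity $\sum_{e\subset\pt T_i}|e|\,\pt_\bnu^{T_i} u_h = 0$ (which is valid since $u_h$ is affine, hence $\triangle u_h=0$, on each $T_i$) to re-express $\sigma_z$ in a form in which the long-spoke contributions partially cancel.

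For part (ii), under the quasi-non-obtuseness $\mathcal{A}1_{\rm ani}$, the altitude $d_i$ from $z$ in each $T_i$ is comparable to $H_{T_i}$. Summing the elementwise identities over $i=1,\ldots,m$ and comparing with \eqref{FEM}, I would rewrite the weighted long-spoke sum appearing in $\sigma_z$ as: (a)~a contribution from the two short spokes $S_1^-,S_{m+1}^-$, which have length $\sim h_z$ and thus give $\lesssim h_z|J_z|$; (b)~a contribution from the short rim edges $\{S_i\}_{i=1}^m$ (also of length $\sim h_z$), again $\lesssim h_z|J_z|$; and (c)~a volume residual of size $\lesssim h_z H_z \eps^{-2}|\widebar F_z|$. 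For an interior node~$z$, the full FEM equation \eqref{FEM} cancels the complementary-half-patch volume contribution against the one considered here, sharpening~(c) to $h_z^2 \eps^{-2}|\widebar F_z|$; for $z\in\mathcal{N}^*_{\pt\Omega}$ this cancellation fails (only part of the patch is present), which produces the indicator term in \eqref{bound_lem_jump_A1}.

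For part (i), the Local Element Orientation condition $\mathcal{A}1_{\rm ani}^*$ provides a rectangle $R_z\supset\omega_z$ with $|R_z|\sim|\omega_z|$, forcing the long spokes $\{S_i^-\}_{i=2}^m$ to be nearly aligned with the long axis $\mathbf{i}_\xi$, with $\bmu_i^-\cdot\mathbf{j}=O(h_z/H_z)$ where $\mathbf{j}\perp\mathbf{i}_\xi$. My plan is to decompose each $[\pt_\bnu u_h]_{S_i^-}= \bnu_i^-\cdot(\nabla u_h|_{T_i}-\nabla u_h|_{T_{i-1}})$ into components along $\mathbf{i}_\xi$ and $\mathbf{j}$; to use continuity of $u_h$ across the spoke (tangential derivatives match) to reduce the jump essentially to a single scalar $b_i-b_{i-1}$ with $b_i:=\mathbf{j}\cdot\nabla u_h|_{T_i}$; and then to perform Abel summation on $\sum_{i=2}^m|S_i^-|(b_i-b_{i-1})$. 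Near-alignment and the $\sim h_z$ width of the strip traced out by the rim edges force the internal differences $|S_{i+1}^-|-|S_i^-|=O(h_z)$, so the internal telescoping error is $O(h_z|J_z|)$. The collected boundary pieces at $i=1,m+1$ naturally take the form of jumps across the short spokes weighted by the harmonic-mean coefficient $H_{T_{i-1}}H_{T_i}/(H_{T_{i-1}}+H_{T_i})$, which arises as the inverse of a sum of reciprocal altitudes in the two triangles meeting at a short spoke. The $\widebar F_z$ contribution is handled exactly as in part~(ii).

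The main obstacle will be controlling the accumulation of $h_z/H_z$-small alignment errors over the $\sim N_z$ internal spokes: a term-by-term bound would lose a factor of~$N_z$. Avoiding this requires carefully pairing each tangential-derivative correction with the projection factor $\bmu\cdot\mathbf{i}_\xi$ (so that mismatches against $\mathbf{i}_\xi$ and $\mathbf{j}$ combine to $O(1)$ per spoke and the weighted differences telescope sharply), together with the uniform bound on~$N_z$ from the triangulation assumptions. A secondary delicate point is the handling of the near-alignment of~$\bnu_i^-$ and~$\bnu_{i-1}^+$ across the shared spoke~$S_i^-$, where a small rotation of the normal must be absorbed into the $O(h_z/H_z)$ error without degrading the leading harmonic-mean identification.
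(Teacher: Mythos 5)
Your high-level intuition — telescoping and near-alignment under $\mathcal{A}1_{\rm ani}^*$ — is right, and your setup for part (i), writing each long-spoke jump as a sum of a small tangential piece and a piece proportional to $\llbracket\pt_\eta u_h\rrbracket=b_i-b_{i-1}$ with $|\eta_i|\lesssim h_z$ and $\xi_i=H_z^++O(h_z)$, matches the paper's opening step. But there are two genuine gaps. First, the Abel summation as you stated it doesn't close: $\sum_{i=2}^m|S_i^-|(b_i-b_{i-1})$ Abel-sums to boundary terms plus $\sum(|S_{i+1}^-|-|S_i^-|)b_i$, and the individual $b_i$ (unlike the differences $b_i-b_{i-1}$) are not controlled by $|J_z|$; the paper avoids this entirely by factoring out the nearly-constant coefficient $H_z^+$ and leaving $\sum(b_i-b_{i-1})$ to telescope exactly, so the error is $O(h_z\sum_i|b_i-b_{i-1}|)\lesssim h_z|J_z|$. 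Second, and more importantly, the harmonic-mean weight $\frac{H_{T_{i-1}}H_{T_i}}{H_{T_{i-1}}+H_{T_i}}$ cannot come from working inside the half-patch $\{T_i\}_{i\le m}$ alone: after telescoping you would get $\sigma_z\approx -H_z^+(b_m-b_1)+\dots$, which is not the weighted sum of jumps across $S_1^-$ and $S_{m+1}^-$. The paper's key move is to form a \emph{second} relation of the same type for the complementary half $\{T_i\}_{i>m}$, obtained by subtracting $\sigma_z$ from $2\eps^{-2}$ times the full FEM equation \eqref{FEM}, giving $-\sigma_z\approx H_z^-\sum_{i=m+2}^N\llbracket\pt_\eta u_h\rrbracket+\dots$; then combining $H_z^-\times$\eqref{a}$+H_z^+\times$\eqref{b}, together with the closure $\sum_{i=1}^N\llbracket\pt_\eta u_h\rrbracket=0$ and the signed-area identity $\sum_{i=1,m}H_z^-|T_i|-\sum_{i=m+1,N}H_z^+|T_i|=O(h_z^2H_z)$, is precisely what produces $(H_z^-+H_z^+)\sigma_z=H_z^-H_z^+\sum_{i=1,m+1}\llbracket\cdots\rrbracket+\dots$, hence the harmonic mean. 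Your proposal never invokes this two-sided elimination, so it cannot arrive at the stated coefficient; the ``inverse of a sum of reciprocal altitudes'' picture is a plausible post-hoc reading of the answer but is not a mechanism.

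For part (ii) you propose a fresh argument by summing elementwise divergence identities and comparing with the FEM relation, but the paper instead deduces \eqref{bound_lem_jump_A1} directly from \eqref{bound_lem_jump}: under $\mathcal{A}1_{\rm ani}$ the extra factor $|\bmu_{i-1}^+\cdot{\mathbf i}_\xi|\lesssim h_zH_z^{-1}$ already kills the correction term for interior anisotropic nodes, while for $z\in{\mathcal N}^*_{\pt\Omega}$ one picks ${\mathbf i}_\xi\perp\pt\Omega$ so that $\pt_\eta u_h\equiv0$ on $\pt\Omega$ makes the whole sum $\sum_{i=2}^m\llbracket\pt_\eta u_h\rrbracket$ vanish and \eqref{a} closes by itself (producing the $H_z\eps^{-1}|\widebar F_z|$ indicator term). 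Your direct route is not wrong in spirit but duplicates effort and still inherits the same missing two-sided combination needed to control the $\widebar F_z$ contribution sharply for interior nodes.
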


\begin{proof}
(i)
For any scalar $w$, let 
$\llbracket w\rrbracket_{\pt T_{i-1}\cap\pt T_i}:=w\bigr|_{\pt T_i}-w\bigr|_{\pt T_{i-1}}$.
Furthermore, for fixed $z\in\mathcal N$, introduce 
the local cartesian coordinates $(\xi,\eta)$ 
such that $z=(0,0)$, and ${\mathbf i}_\xi$ points in the $\xi$ direction
(see Fig.\,\ref{fig_local_kappa} (left)).
In these coordinates, let $(\xi_i,\eta_i)$ be the endpoint of the edge $S_i^-=\pt T_{i-1}\cap\pt T_i$ on $\pt\omega_z$.
%

Now, a calculation shows that
$|S_{i}^-|\bigl[\partial_\bnu u_h\bigr]_{\pt T_{i-1}\cap \pt T_i}=\eta_i \llbracket\pt_\xi  u_h\rrbracket-\xi_i \llbracket\pt_\eta  u_h\rrbracket$,
where, by ${\mathcal A}^*_{\rm ani}$ and the maximum angle condition, any $|\eta_i|\lesssim h_z$, while $\displaystyle H_z^+:=\min_{i=2,\ldots m}\xi_i\sim H_z$ and $0\le \xi_i-H_z^+\lesssim h_z$,
so
\beq\label{a}
\sigma_z=
-\sum_{i=2}^{m} H_z^+ \llbracket\pt_\eta  u_h\rrbracket_{\pt T_{i-1}\cap \pt T_i}+{\mathcal O}(h_z|J_z|)+\eps^{-2}\widebar F_{z}\!\sum_{i=1,m}\!\!\!|T_i|.
\eeq
Here we also used $\theta_{T_i;z}=0$ for $i=2,\ldots,m-1$ and $\theta_{T_i;z}=\frac12$ for $i=1,m$
in view of $T_i\subset{\mathcal T}^*$ for any $T_i\subset\omega_z$ (see also \eqref{Q_T},\,\eqref{Tcal_star}).

Next,
multiplying \eqref{FEM} combined with \eqref{f_bar_z} by $2\eps^{-2}$, then subtracting $\sigma_z$ and applying a similar argument,
one gets
\beq\label{b}
-\sigma_z=
\sum_{i=m+2}^{N} H_z^- \llbracket\pt_\eta  u_h\rrbracket_{\pt T_{i-1}\cap \pt T_i}+{\mathcal O}(h_z|J_z|)+\eps^{-2}\widebar F_{z}\!\!\!\!\sum_{i=m+1,N}\!\!\!\!|T_i|.
\eeq
Here  $\displaystyle H_z^-:=\min_{i=m+2,\ldots N}|\xi_i|\sim H_z$,
and we also used $|S_i^-|\sim h_z$ for $i=1,m+1$.

Finally, using $|T_1|=\frac12 |\xi_2\eta_1-\xi_1\eta_2|=\frac12 \eta_1H_z^+ +{\mathcal O}(h_z^2)$
(where $|\xi_1|+|\eta_2|\lesssim h_z$) and similar observations for the other triangle areas,
we arrive at
$$
\sum_{i=1,m}\!\!H_z^-|T_i|-\sum_{i=m+1,N}\!\!\! H_z^+|T_i|={\mathcal O}(h_z^2 H_z).
$$
Combining this with \eqref{a},\,\eqref{b} 
and $\sum_{i=1}^N\llbracket\pt_\eta  u_h\rrbracket_{\pt T_{i-1}\cap \pt T_i}=0$ yields
$$
(H_z^-+H_z^+)\sigma_z=\!\! \sum_{i=1,m+1}\!\!H_z^-H_z^+ \,\llbracket\pt_\eta  u_h\rrbracket_{\pt T_{i-1}\cap \pt T_i}+ h_z H_z\,{\mathcal O}\bigl(|J_z|+h_z\eps^{-2}|\widebar F_{z}|\bigr).
$$
The desired assertion \eqref{bound_lem_jump} follows in view of
$H_z^+=H_{T_i}+{\mathcal O}(h_z)$ for $i=1,m$ and a similar relation with $H_z^-$ for $i=m+1, N$,
as well as
$$
\llbracket\pt_\eta  u_h\rrbracket_{\pt T_{i-1}\cap \pt T_i}
=\bmu_{i-1}^+\cdot{\mathbf i}_\xi\,\,[\pt_\bnu u_h]_{\pt T_{i-1}\cap \pt T_i}\,.
$$
The latter follows from $\pt_\eta={\mathbf i}_\eta\cdot \nabla$ combined with $\llbracket\nabla  u_h\rrbracket_{\pt T_{i-1}\cap \pt T_i}=\bnu_i^-[\pt_\bnu u_h]_{\pt T_{i-1}\cap \pt T_i}$
and $\bnu_i^-\cdot{\mathbf i}_\eta=\bmu_{i-1}^+\cdot{\mathbf i}_\xi\,$.
\smallskip

(ii) 
If $z\in {\mathcal N}_{\rm ani}\backslash\pt\Omega$, then ${\mathcal A}1$ implies $|\bmu_{i-1}^+\cdot{\mathbf i}_\xi|\lesssim h_z H_z^{-1}$,
so \eqref{bound_lem_jump} implies \eqref{bound_lem_jump_A1}.
It remains to consider $z\in{\mathcal N}^*_{\pt\Omega}$. In view of ${\mathcal A}1$ and \eqref{N_boundary}, one may choose  the  unit vector ${\mathbf i}_\xi$ in part~(i) of this proof to be normal to $\pt\Omega$.
As $u_h=\pt_\eta  u_h=0$ on $\pt\Omega$ so $\sum_{i=2}^{m}  \llbracket\pt_\eta  u_h\rrbracket_{\pt T_{i-1}\cap \pt T_i}=0$, so \eqref{a} again yields the desired assertion
\eqref{bound_lem_jump_A1}.
\end{proof}
\medskip

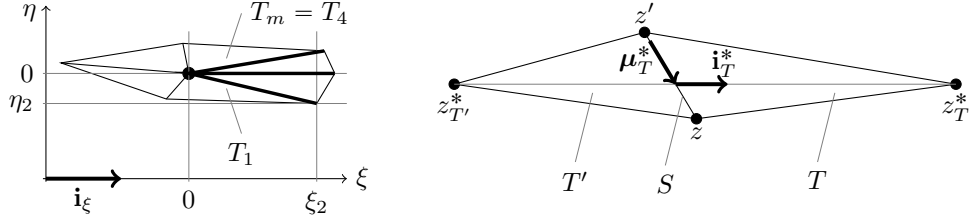
\begin{figure}[!t]
\begin{tikzpicture}[scale=0.20]
  \draw[->] (-0.2,0) -- (20,0) node[right] {$\xi$};
  \draw[line width=1.5pt, ->] (0,0) -- (5,0) ; \node[below] at (2.5,0) {${\mathbf i}_{\xi}$};
  \draw[->] (0,-0.2) -- (0,11.5) ;
  \node[left] at (0,11) {$\eta$};
\draw[fill] (9.5,7) circle [radius=0.4];
\path[draw]  (8,5.3)--(18,5)--(19.2,7)--(18.5,8.5)--(9.1,9)--(1,7.7)--cycle;
\path[draw] (8,5.3)--(9.5,7)--(18,5);
\path[draw] (19.2,7)--(9.5,7)--(18.5,8.5);
\path[draw] (9.1,9)--(9.5,7)--(1,7.7);
%
%
\path[draw,help lines]  (18,-0.2)node[below,black] {$\xi_2$}--(18,9.8);
\path[draw,help lines]  (9.5,-0.2)node[below,black] {$0$}--(9.5,9.8);
\path[draw,help lines]  (-0.2,7)node[left,black] {$0$}--(20,7);
\path[draw,help lines]  (-0.2,5)node[left,black] {$\eta_2$}--(20,5);
\path[draw,help lines](13,3)node[below,black] {$T_1$}--(12,6);
\path[draw,help lines](13,11)node[right,black] {$T_m=T_4$}--(12,8);
\path[draw,line width=1.3 pt] (18,5)--(9.5,7)--(19.2,7);
\path[draw,line width=1.3 pt] (9.5,7)--(18.5,8.5);
\end{tikzpicture}
\hfill
\begin{tikzpicture}[scale=0.23]
\draw[fill] (0,-2) circle [radius=0.3];
\draw[fill] (-3,3) circle [radius=0.3];
\draw[fill] (15,0) circle [radius=0.3];
\draw[fill] (-14,0) circle [radius=0.3];
\path[draw]  (0,-2)node[below]{$z$}--(15,0)node[below]{$z^*_{T}$}--(-3,3)node[above]{$z'$}--(-14,0)node[below]{$z^*_{T'}$}--cycle;
\path[draw]  (0,-2)--(-3,3);
\path[draw, help lines](15,0)--(-14,0);
\draw[line width=1.5pt, <-] (-1.2,0) -- (-2.7,2.5); \node[left] at (-1.9,1.4) {${\bmu}_T^*$};
\draw[line width=1.5pt, ->] (-1.2,0) -- (1.8,0) ; \node[above] at (1.6,-0.1) {${\mathbf i}_T^*$};
\path[draw,help lines](7,-4.5)node[below,black] {$T$}--(8,-0.5);
\path[draw,help lines](-7,-4.5)node[below,black] {$T'$}--(-6,-0.5);
\path[draw,help lines](-1.8,-4.7)node[below,black] {$S$}--(-0.8,-0.7);
\path[draw,white] (0,-8)--(0,-8);
\end{tikzpicture}
\caption{Notation used in 
Lemma~\ref{lem_jump} (left):
the edges $\{S_i^-\}_{i=2}^m$ (here $m=4$) are highlighted,  the unit vector ${\mathbf i}_{\xi}$ in the direction of any of these edges (here $S_3^-$),
local coordinates $(\xi,\eta)$, the endpoint $(\xi_2,\eta_2)$ of the edge $S_2^-$.
Notation used in the definition \eqref{btauS} of $\btauS^J$ (right).}
\label{fig_local_kappa}
\end{figure}

\section{Construction of  $\btau_z$ for $ h_z \lesssim \eps$ under weaker condition ${\mathcal A}1^*$.
Proof of Theorem~\ref{theo_main_bounds}(ii)}\label{sec_A1_star}
This section deals with a weaker version ${\mathcal A}1^*$  of ${\mathcal A}1$.
For this we need to address the terms  subtracted from $\sigma_z$ in \eqref{bound_lem_jump} (which are $\lesssim h_z |J_z|$ under assumption ${\mathcal A}1$, but not under ${\mathcal A}1^*$).

Let ${\mathcal S}^*\color{blue}\subset{\mathcal S}$ in the definition \eqref{tau_def} of $\btau$ be
\beq\label{S_star}\color{blue}
{\mathcal S}^*:=\!\bigl\{ S\mbox{~is shortest edge in $T$ and $T'$} :\,
h_T\ll H_T,\; h_{T'}\ll H_{T'},\;
h_T\sim h_{T'}\lesssim \eps \bigr\}.
\eeq
Roughly speaking, ${\mathcal S}^*\subset\mathcal S$
is the set of short edges shared by pairs of anisotropic triangles.
Now, we include a non-trivial component $\sum_{S\in{\mathcal S}^*} \btauS^J$ in $\btau$, where
$\btauS^J$ has support on $T\cup T'$
for any $S=\pt T\cap\pt T'\in {\mathcal S}^*$, and
\beq\label{btauS}
\btauS^J:={\color{blue}\kappa_S}
(d^*_T)^{-1}(\phi_z+\phi_{z'})\bmu^*_T\;\;\mbox{in~}T,
\qquad
{\color{blue}\kappa_S}
:=\bmu_T^*\cdot{\mathbf i}_T^*\,\,\frac{H_T H_{T'}}{H_T+H_{T'}}\,[\pt_\bnu u_h]_{S}\,.
\eeq
Here
$d_T^*:=2|T||S|^{-1}$,
$\bmu_T^*$ is the tangential unit vector along $S$ in the counterclockwise direction in $T$,
the edge $S$ joins the nodes $z'$ and $z$,
with  $\bmu_T^*$ pointing from $z'$ to $z$,
and ${\mathbf i}^*_T$ is the unit vector in the direction from $z^*_{T'}$ to $z^*_T$, the latter being the vertices opposite to $S$
in $T'$ and $T$ respectively;
see Fig.\,\ref{fig_local_kappa} (right).
Note that $\bmu^*_T=-\bmu^*_{T'}$ and
${\mathbf i}^*_T=-{\mathbf i}^*_{T'}$ so
{\color{blue}the definition of $\kappa_S$ is consistent for $T$ and $T'$.}
\medskip

\begin{lemma}\label{lem_tauS}
For any $S=\pt T\cap\pt T'\in {\mathcal S}^*$ with the notation \eqref{S_star},\,\eqref{btauS}, one has~\eqref{bound_theo_S} and
\beq
\label{taus_jumps}
\btauS^J\cdot \bnu=0\;\;\mbox{on~}S,\qquad
|S''|\btauS^J\cdot \bnu={\color{blue}\kappa_S}(\phi_z-\phi_{z'})\;\;\mbox{on~any~edge~}S''\subset \pt T\backslash S.
\eeq
\end{lemma}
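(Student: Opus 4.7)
The plan is to verify, in sequence, (i) that $\kappa_S$ in \eqref{btauS} is unambiguously defined on $T$ and on $T'$; (ii) the two normal-trace identities in \eqref{taus_jumps}; (iii) the divergence-free property in \eqref{bound_theo_S}; and (iv) the $L^2$ bound in \eqref{bound_theo_S}. The key structural observation I will exploit throughout is that $\btauS^J$ is, on each of $T$ and $T'$, a constant vector ($\bmu_T^*$, respectively $\bmu_{T'}^*$) times the linear ``roof'' $\phi_z+\phi_{z'}$, which equals $1$ on $S$ and $0$ at the opposite vertex $z_T^*$.

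For consistency of $\kappa_S$, I would note that switching from $T$ to $T'$ reverses the counterclockwise traversal of $S$, so $\bmu_{T'}^*=-\bmu_T^*$; likewise the line between the opposite vertices reverses, giving ${\mathbf i}_{T'}^*=-{\mathbf i}_T^*$. The product $\bmu_T^*\!\cdot{\mathbf i}_T^*$ is therefore invariant, as is the jump $[\pt_\bnu u_h]_S$, and $\kappa_S$ is a single well-defined scalar. The identity $\btauS^J\cdot\bnu=0$ on $S$ is then immediate because $\bmu_T^*$ is tangent to $S$.

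On an edge $S''=[z'',z_T^*]\subset\pt T\setminus S$ with $z''\in\{z,z'\}$, the basis function of the other endpoint of $S$ vanishes on $S''$, so $\phi_z+\phi_{z'}$ restricts to $\phi_{z''}$ while $\phi_z-\phi_{z'}$ restricts to $+\phi_{z''}$ if $z''=z$ and to $-\phi_{z''}$ if $z''=z'$. The second identity in \eqref{taus_jumps} therefore reduces to the geometric claim
$$
|S''|\,(d_T^*)^{-1}\,\bmu_T^*\cdot\bnu_{S''}\;=\;\left\{\begin{array}{rl}+1,& z''=z,\\ -1,& z''=z',\end{array}\right.
$$
which I will derive by writing $\bmu_T^*=(z-z')/|S|$, evaluating the projection $(z-z')\cdot\bnu_{S''}$ as plus or minus the altitude $h$ from the remaining vertex of $T$ to $S''$ (the sign being dictated by the outward orientation of $\bnu_{S''}$), and then combining with the two area identities $2|T|=|S|\,d_T^*=|S''|\,h$ to cancel $h$. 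The divergence-free property follows in the same spirit: $\bmu_T^*$ is constant in $T$, so ${\rm div}\,\btauS^J=\kappa_S(d_T^*)^{-1}\nabla(\phi_z+\phi_{z'})\cdot\bmu_T^*$, and $\nabla(\phi_z+\phi_{z'})$ is orthogonal to its level line $S$, hence to $\bmu_T^*$.

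For the bound in \eqref{bound_theo_S}, $|\phi_z+\phi_{z'}|\le 1$ gives $\|\eps\btauS^J\|_T^2\le \eps^2\kappa_S^2(d_T^*)^{-2}|T|$, and $|\kappa_S|\le H_T\,|[\pt_\bnu u_h]_S|$ from the trivial estimate $H_TH_{T'}/(H_T+H_{T'})\le H_T$. The step I expect to require most care is to conclude that $d_T^*\sim H_T$, i.e.\ that the altitude to the shortest edge of a thin triangle is comparable to its longest edge; this follows from the maximum angle condition together with $h_T\ll H_T$ and $S$ being the shortest edge of $T$, and I view it as the main geometric obstacle. Granted this, $\kappa_S^2(d_T^*)^{-2}\lesssim[\pt_\bnu u_h]_S^2$, summing over $T$ and $T'$ yields $\|\eps\btauS^J\|_{T\cup T'}\lesssim\|\eps[\pt_\bnu u_h]_S\|_{T\cup T'}$, and the final inequality follows from $|[\pt_\bnu u_h]_S|\le J_z$ and $|T|+|T'|\le|\omega_z|$, noting that $S\in{\mathcal S}^*$ places us in the regime $h_z\lesssim\eps$ in which $\min\{1,\eps h_z^{-1}\}=1$.
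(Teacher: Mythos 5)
Most of your argument is correct, and in places more explicit than the paper's own proof (which handles the trace identities by saying ``imitate the proof of Lemma~\ref{lem_tau_case1}''): your verification that $\kappa_S$ is well defined, your direct computation of the two normal-trace identities in \eqref{taus_jumps}, the orthogonality $\nabla(\phi_z+\phi_{z'})\cdot\bmu_T^*=0$ giving the divergence-free property, the observation $d_T^*\sim H_T$ under the maximum angle condition, and the first bound $\|\eps\btauS^J\|_{T\cup T'}\lesssim\|\eps[\pt_\bnu u_h]_S\|_{T\cup T'}$ all go through.

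The final inequality in \eqref{bound_theo_S}, however, contains a genuine gap. You claim that ``$S\in{\mathcal S}^*$ places us in the regime $h_z\lesssim\eps$'' so that $\min\{1,\eps h_z^{-1}\}=1$, and reduce to the crude inclusion $T\cup T'\subset\omega_z$. But the definition of ${\mathcal S}^*$ controls only $h_T\sim h_{T'}\lesssim\eps$ for the two triangles meeting along $S$, i.e.\ it controls $\mathring{h}_z=\min_{T\subset\omega_z}h_T$ and not $h_z=\max_{T\subset\omega_z}h_T$. Under assumption \eqref{mixed_node} the patch $\omega_z$ may simultaneously contain thin anisotropic triangles (among them $T,T'$) and shape-regular triangles of diameter $\sim H_z$, in which case $h_z\sim H_z\gg\eps$ and the weight $\min\{1,\eps h_z^{-1}\}^{1/2}$ is genuinely small. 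Then the inequality $|T\cup T'|\le|\omega_z|$ alone does not deliver the required factor. The paper instead uses the sharper area comparison $|T\cup T'|\,|\omega_z|^{-1}\lesssim(h_T+h_{T'})\,h_z^{-1}$, which follows from $|T|\sim h_T H_T$, $H_T\sim H_z$ (a consequence of \eqref{mixed_node} since $h_T\ll H_T$) and $|\omega_z|\sim h_z H_z$. Using $h_T\sim h_{T'}\le h_z$ gives $(h_T+h_{T'})h_z^{-1}\lesssim 1$, while $h_T\sim h_{T'}\lesssim\eps$ gives $(h_T+h_{T'})h_z^{-1}\lesssim\eps h_z^{-1}$, so $|T\cup T'|\,|\omega_z|^{-1}\lesssim\min\{1,\eps h_z^{-1}\}$ without any assumption on the size of $h_z$, and this supplies exactly the missing power of $\min\{1,\eps h_z^{-1}\}^{1/2}$.
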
\vspace{-0.15cm}
\begin{proof}
For ${\rm div} \btauS^J=0$ in \eqref{bound_theo_S}, as well as \eqref{taus_jumps},
imitate the proof of Lemma~\ref{lem_tau_case1}.
For the {\color{blue}first} bound on $\eps\btauS^J$ in \eqref{bound_theo_S}, note that
$d_T^*\sim d_{T'}^*\sim H_T$ implies
$|\btauS^J|\lesssim H_T^{-1}|{\color{blue}\kappa_S}|\lesssim | [\pt_\bnu u_h]_{S}|$.
{\color{blue}For the final assertion in \eqref{bound_theo_S}, note that
$|T\cup T'||\omega_z|^{-1}\lesssim (h_T+h_{T'})h_z^{-1}\lesssim \min\{1,\eps h_z^{-1}\}$.}
\end{proof}
\medskip

Now that $\sum_{S\in{\mathcal S}^*} \btauS^J$ is included in $\btau$, we need to ensure that $\btau$ still satisfies~\eqref{tau_jump}.
For this, the definition of $\btauz$  should be updated to take into account the possibly non-trivial jumps $[\btauS^J\cdot \bnu]$ across $\gamma_z$.
{\color{blue}For a possible modification of $\btauz$ in the case $h_z\gtrsim \eps$, see Remark~\ref{rem_kappa_case2}.}

For $h_z\lesssim \eps$, the definition \eqref{tauz_h_small} of $\btauz$ is tweaked as follows.
Relations \eqref{tau_z} and \eqref{F_z_T} remain unchanged, while in \eqref{tau_n_hat} we replace
 $\{\beta_i\}_{i=1}^{N_z}$ by $\{\beta_i^*\}_{i=1}^{N_z}$,
 where
 \beq\label{beta_star}\color{blue}
 \beta_i:=\beta_i^*+\kappa_{i}+\kappa_{i+1},
 \qquad
\kappa_i:=\left\{\begin{array}{cl}
\kappa_{\pt T_{i-1}\cap\pt T_i}& \mbox{if~~} \pt T_{i-1}\cap\pt T_i\subset {\mathcal S}^*,\\[3pt]
0&\mbox{otherwise.}
\end{array}\right.
\eeq

\begin{remark}[Anisotropic flux equilibration]
The observations of Remark~\ref{rem_beta} remain valid
(although a version of system \eqref{tau_n_hat}
is now solved for $\{\beta_i^*\}$ ).
One can simply use \eqref{beta_choice_case1}
(or choose $\{\beta_i\}$ as in the proof of Lemma~\ref{lem_beta_star}).
{\color{blue}Similarly, one can minimize \eqref{optima}, only subject to \eqref{tau_n_hat} with $\{\beta_i\}$ replaced by $\{\beta_i^*\}$ in the latter.}
\end{remark}%
\medskip

\begin{lemma}\label{lem_beta_star}
Under condition ${\mathcal A}1^*$, for any $z\in\mathcal N$ with $h_z\lesssim \eps$,
let $\btauz$ be defined by \eqref{tau_z} and \eqref{F_z_T} and use the notation \eqref{S_star},\,\eqref{btauS}.
Then there is a solution of the system \eqref{tau_n_hat},
in which $\{\beta_i\}_{i=1}^{N_z}$ is replaced by $\{\beta_i^*\}_{i=1}^{N_z}$ of \eqref{beta_star},
such that  $\btau$ of \eqref{tau_def} satisfies \eqref{tau_jump} and
the results of Lemmas~\ref{lem_tau_case1}, \ref{cor_case1} and~\ref{lem_beta} remain true.
\end{lemma}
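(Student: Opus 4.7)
The plan is to adapt the proofs of Lemmas~\ref{lem_tau_case1}, \ref{cor_case1} and~\ref{lem_beta} to this weaker-assumption setting, with the extra component $\sum_{S\in\mathcal{S}^*}\btauS^J$ of $\btau$ absorbing the short-edge jump terms that were controlled directly under~${\mathcal A}1$. First, the system \eqref{tau_n_hat} with $\{\beta_i\}$ replaced by $\{\beta_i^*\}$ has the same coefficient structure as the original (only the unknowns are renamed), so by the argument at the end of the proof of Lemma~\ref{lem_tau_case1} it is under-determined and consistent.

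Next I would verify \eqref{tau_jump} on each interior edge $S_i^-\subset\gamma_z$. A direct computation of the jump of $\btauz$ using the translation \eqref{beta_star} gives
\[
[\btauz\cdot\bnu]_{S_i^-}=\phi_z\,[\pt_\bnu u_h]_{S_i^-}+\frac{\phi_z}{|S_i^-|}\bigl(\kappa_{i-1}-\kappa_{i+1}\bigr),
\]
since $\beta_{i-1}-\beta_i=(\beta_{i-1}^*-\beta_i^*)+(\kappa_{i-1}-\kappa_{i+1})$. Summing over the two nodal patches sharing $S_i^-$ and using $\phi_z+\phi_{z_1}=1$ on $S_i^-$ recovers $[\pt_\bnu u_h]_{S_i^-}$ for the principal part, while the excess $\kappa$-terms must be cancelled exactly by the contributions of $\btauS^J$ for those $S\in\mathcal{S}^*$ adjacent to $S_i^-$. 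Using \eqref{taus_jumps} together with the sign conventions $\bmu_T^*=-\bmu_{T'}^*$ and $\mathbf{i}_T^*=-\mathbf{i}_{T'}^*$, the contributions of the neighbouring $\btauS^J$ on $S_i^-$ are precisely of the form $-\phi_z\kappa_{i\pm 1}/|S_i^-|$, producing the required cancellation.

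The construction of $\{\beta_i^*\}$ mimics the three-case analysis in the proof of Lemma~\ref{lem_beta}: cases (a) (${\mathcal A}1_{\rm mix}$) and (c) ($z\in\mathcal{N}^*_{\pt\Omega}$) are unchanged, since ${\mathcal A}1^*$ reduces to ${\mathcal A}1$ there and Lemma~\ref{lem_jump}(ii) remains applicable; case~(b), for $z\in\mathcal{N}_{\rm ani}\backslash\pt\Omega$ satisfying only ${\mathcal A}1_{\rm ani}^*$, now requires Lemma~\ref{lem_jump}(i) in place of~(ii). The bound \eqref{bound_lem_jump} controls $\sigma_z$ modulo an explicit combination of short-edge jumps, and by comparison with \eqref{btauS} these jumps are precisely $\kappa_{S_1^-}$ and $\kappa_{S_{m+1}^-}$, so they are absorbed automatically once we solve for $\{\beta_i^*\}$ rather than $\{\beta_i\}$. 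For the final norm bound, $|\beta_i^* d_i^{-1}|$ is controlled as in Lemma~\ref{lem_beta}, while for $|\kappa_i d_i^{-1}|$ the rectangular condition in~${\mathcal A}1_{\rm ani}^*$ forces the short edges to be nearly perpendicular to the long direction of $\omega_z$, so $|\bmu_T^*\cdot\mathbf{i}_T^*|\lesssim h_z/H_z$; hence $|\kappa_S|\lesssim h_z|J_z|$ by \eqref{btauS}, and since $d_i\sim h_z$ on any $T_i$ with a short edge meeting at $z$, we obtain $|\kappa_i/d_i|\lesssim|J_z|$. The divergence identity \eqref{tauz_small_h_prop_a} is unaffected by the purely tangential translation in \eqref{beta_star}, so Lemma~\ref{cor_case1} carries over verbatim.

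The main obstacle will be the jump-matching step: verifying that the $\kappa$-corrections produced by \eqref{beta_star} cancel the $\btauS^J$ contributions \emph{exactly} (not merely in norm) requires careful bookkeeping of the orientations of $\bmu_T^*$ and $\mathbf{i}_T^*$, and of the correspondence between the local $z$-patch indexing ($i\pm 1$, $S_i^\pm$) and the global edge-labelling used in \eqref{btauS}; once this algebraic identity is established, the remaining steps are straightforward adaptations of the arguments already in place.
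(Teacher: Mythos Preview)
Your outline has the right overall structure and you correctly identify the cancellation mechanism from Lemma~\ref{lem_jump}(i) as the key new ingredient in case~(b). But the final norm-bound step contains a genuine error that undermines the argument.

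You claim that ``the rectangular condition in~${\mathcal A}1_{\rm ani}^*$ forces the short edges to be nearly perpendicular to the long direction of $\omega_z$, so $|\bmu_T^*\cdot\mathbf{i}_T^*|\lesssim h_z/H_z$; hence $|\kappa_S|\lesssim h_z|J_z|$.'' This is false. The condition ${\mathcal A}1_{\rm ani}^*$ only says that $\omega_z$ fits in a rectangle $R_z$ with $|R_z|\sim|\omega_z|$; it places no constraint on the orientation of a short edge of length $\sim h_z$ inside that rectangle. (See Fig.~\ref{fig_node_types}, top left: the short edges there are visibly oblique.) In general $|\bmu_T^*\cdot\mathbf{i}_T^*|$ is of order $1$ and $|\kappa_S|$ of order $H_z|J_z|$. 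The near-perpendicularity you invoke is precisely what ${\mathcal A}1_{\rm ani}$ gives and ${\mathcal A}1_{\rm ani}^*$ does not; that gap is the entire reason $\btauS^J$ is introduced.

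Consequently your strategy of bounding $|\beta_i d_i^{-1}|$ by splitting $\beta_i=\beta_i^*+(\kappa_i+\kappa_{i+1})$ and controlling each piece separately cannot work. For $i\in\{0,1,m,m+1\}$ one has $d_i\sim h_z$, so $|\kappa_i|/d_i$ can be as large as $H_z h_z^{-1}|J_z|$; and $|\beta_i^* d_i^{-1}|$ is \emph{also} not controlled ``as in Lemma~\ref{lem_beta}'', because case~(b) there relies on Lemma~\ref{lem_jump}(ii), which requires ${\mathcal A}1_{\rm ani}$, to bound $\widetilde\sigma_z$. The two large pieces cancel, but neither is small. The paper instead bounds $|\beta_i d_i^{-1}|$ directly: for $i\neq 0,1,m,m+1,N$ one has $d_i\sim H_z$, so the modified recursion \eqref{beta_aux_new} contributes only $\max_j|\kappa_j|\,H_z^{-1}\lesssim|J_z|$; for $i=1,m+1$ the original recursion \eqref{beta_aux} applies because $\beta_i-\beta_{i-1}=\beta_i^*-\beta_{i-1}^*$ there; and for the critical indices $i=0,m$ one combines \eqref{beta_0_m} (written for $\beta^*$) with \eqref{beta_star} to get $[\beta_0-\kappa_1]-[\beta_m-\kappa_{m+1}]=\widetilde\sigma_z+{\mathcal O}(h_z|J_z|)$, then sets $\hat\sigma_z:=h_z^{-1}(\widetilde\sigma_z+\kappa_1-\kappa_{m+1})$ and uses \eqref{bound_lem_jump} together with $|\mathbf{i}_{T_0}^*+\mathbf{i}_\xi|,\,|\mathbf{i}_{T_m}^*-\mathbf{i}_\xi|\lesssim h_z H_z^{-1}$ to show $\eps h_z^{-1}|\sigma_z+\kappa_1-\kappa_{m+1}|\lesssim|\eps J_z|$. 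This is the cancellation you alluded to, but it must be invoked for the $\beta$-bound itself, not treated as a separate step with $\kappa$ bounded independently.

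A smaller point: case~(a) is not literally unchanged. If $z$ satisfies ${\mathcal A}1_{\rm mix}$ with $\gamma_z\cap{\mathcal S}^*\neq\emptyset$, then $\mathring{\omega}_z=\emptyset$, $\mathring{\mathcal S}_z$ is a single edge $\pt T_1\cap\pt T_2\in{\mathcal S}^*$, and one must use \eqref{beta_aux_new} for $i>2$; the extra term is $|\kappa_2|\,d_i^{-1}\sim|\kappa_2|\,H_z^{-1}\lesssim|J_z|$, harmless but present.
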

\smallskip

\begin{proof}
One can easily check that, indeed, the results of Lemmas~\ref{lem_tau_case1} and \ref{cor_case1} remain true,
while for Lemma~\ref{lem_beta}, it suffices to obtain~\eqref{case1_beta_desired}.
That the normal jumps in $\btau$ satisfy
\eqref{tau_jump} can be checked by a direct calculation using \eqref{taus_jumps}
and taking into account that if
$(\pt{T_{i-1}}\cap\pt T_i)\subset(\gamma_z\cap{\mathcal S}^*)$, then
$\beta_i-\beta_{i-1}=\beta_i^*-\beta_{i-1}^*$ (in view of $\color{blue}\kappa_{{i-1}}=\kappa_{i+1}=0$).
Note that in the latter case \eqref{beta_aux} is still true.

Otherwise, i.e. for $(\pt{T_{i-1}}\cap\pt T_i)\not\subset(\gamma_z\cap{\mathcal S}^*)$,
 a version of \eqref{beta_aux} will be employed:
\beq\label{beta_aux_new}
\mbox{if~}|S_i^-|\sim d_i \gtrsim d_{i-1}
\;\Rightarrow\;
|\beta_id_i^{-1}|\lesssim |\beta_{i-1}d_{i-1}^{-1}|+\max_j|\alpha_{j}|+
{\color{blue}\max_j|\kappa_{j}|}
d_i^{-1}+|J_z|.
\eeq

Next,  consider two cases (a) and (b), as in the proof of Lemma~\ref{lem_beta}, to get the bound \eqref{case1_beta_desired} for $\beta_i d_i^{-1}$ and thus complete the proof.
\smallskip

(a) Suppose that $z$ satisfies ${\mathcal A}1_{\rm mix}$.
Unless $\gamma_z\cap {\mathcal S}^*=\emptyset$ (and so the results of Lemma~\ref{lem_beta} apply),
$\mathring{\omega}_z=\emptyset$ and $\mathring{\mathcal S}_z=\gamma_z\cap {\mathcal S}^*$ contains exactly one edge $\pt T_1\cap\pt T_2$.
{\color{blue}Then note that $\kappa_i=0$ unless $i=2$.}
Set $\beta_1:=0$ and use \eqref{beta_aux} with $i=2$ as in the proof of Lemma~\ref{lem_beta}.
For $i>2$, use \eqref{beta_aux_new}, where $d_i\sim H_z$, so the additional term
$\color{blue}\max|\kappa_j|d_i^{-1}=|\kappa_{2}|H_z^{-1}\lesssim |J_z|$.
So we get~\eqref{case1_beta_desired} with $\hat\sigma_z:=0$.
\smallskip

(b) It remains to consider
$z\in{\mathcal N}_{\rm ani}\backslash\pt\Omega$ under condition ${\mathcal A}1_{\rm ani}^*$
(as $z\in{\mathcal N}_{\rm ani}\cap\pt\Omega$ satisfies either ${\mathcal A}1_{\rm ani}$ or ${\mathcal A}1_{\rm mixed}$, so have been considered in part (a) or in Lemma~\ref{lem_beta}).
We shall imitate part~(b) from the proof of Lemma~\ref{lem_beta}.
Note that
$(\pt{T_{i-1}}\cap\pt T_i)\subset(\gamma_z\cap{\mathcal S}^*)$
{\color{blue}(and so $\kappa_i\neq 0$)}
only for $i=1,m+1$.
Hence, we employ \eqref{beta_aux} for $i=1,m+1$, and
\eqref{beta_aux_new} with $d_i\sim H_z$ for $i\neq 1,m, m+1, N$,
and it remains to bound $\beta_i d_i^{-1}$ for $i=0,m$.
For the latter, combining \eqref{beta_0_m} with \eqref{beta_star} yields
$$
[\beta_0-{\color{blue}\kappa_{1}}]-[\beta_m-{\color{blue}\kappa_{m+1}}]
=
\widetilde\sigma_z
+{\mathcal O}\bigl(h_z|J_z|\bigr).
$$
From this bound, one gets \eqref{case1_beta_desired} only now
$\hat\sigma_z:=h_z^{-1}(\widetilde\sigma_z+\color{blue}\kappa_{1}-\kappa_{m+1})$. 
As $|\widetilde\sigma_z-\sigma_z|$ was bounded in the proof of Lemma~\ref{lem_beta}, to complete the proof, it 
remains to show 
%
$$
\eps h_z^{-1}|\sigma_z+{\color{blue}\kappa_{1}-\kappa_{m+1}}|\lesssim |\eps J_z|.
$$
The above follows from \eqref{bound_lem_jump} using the following observations.
For $\color{blue}\kappa_{1}=\kappa_{\pt{T_{0}}\cap\pt T_{1}}$,
we use 
{\color{blue}the triangle $T_0$} with
$\bmu^*_{T_0}=\bmu_0^+$
and
$|{\mathbf i}^*_{T_0}+{\mathbf i}_\xi|\lesssim h_z H_z^{-1}$.
Similarly, for $\color{blue}\kappa_{m+1}=\kappa_{\pt{T_{m}}\cap\pt T_{m+1}}$, we use
{\color{blue}the triangle $T_m$ with}
$\bmu^*_{T_m}=\bmu_m^+$
and
$|{\mathbf i}^*_{T_m}-{\mathbf i}_\xi|\lesssim h_z H_z^{-1}$.
\end{proof}
\smallskip

\section{Construction of  $\btau_z$ for $h_z\gtrsim \eps$ under condition ${\mathcal A}2$}\label{sec_A2}
Throughout this section, for any $T\subset\omega_z$, we use
$S_T$, $\bnu_T$ and $\bmu_T$, as well as $S_T^\pm$, $\bnu_T^\pm$ and $\bmu_T^\pm$,
defined as in \S\ref{sec_A1} (see Fig.\,\ref{fig_tau_z_case1} (right)) only with subscript $T$ in place of $i$ when dealing with element $T$.
We start with {\color{blue}two useful technical results.}
\medskip

\begin{figure}[t!]
~\hfill
\begin{tikzpicture}[scale=0.21]
\path[draw, fill=gray!30!white] (-1.153014337, 4.626351703)--(0,6.5)--(12,0)--cycle;
\draw[fill] (0,6.5) circle [radius=0.4];
\path[draw]  (0,6.5)node[left] {$z$}--(-4,0)--(12,0)--cycle;
\node[left] at (-1.8,3.25) {$S_T^-$};
\node[right] at (6.8,3.25) {$S_T^+$};
%
%
\draw[help lines] (0, 5.25)--(4,7) node[right, black] {$T^+$};
\end{tikzpicture}
\hfill
\begin{tikzpicture}[scale=0.21]
\path[draw,fill=gray!30!white] (1.934442327, 5.452177073)--(-4,0)--(0,6.5)--cycle;
\draw[fill] (0,6.5) circle [radius=0.4];
\path[draw]  (0,6.5)node[left] {$z$}--(-4,0)--(12,0)--cycle;
\node[left] at (-1.8,3.25) {$S_T^-$};
\node[right] at (6.8,3.25) {$S_T^+$};
\draw[help lines] (0, 5.25)--(4,7) node[right, black] {$T^-$};
%
%
\end{tikzpicture}
\hfill~
\caption{Notation used in 
Lemma~\ref{lem_case2}: each triangle $T^\pm\!\!\subset T$ shares the edge $S_T^\pm$ with $T$
and has another edge of length $\eps'\sim \eps$ along $S_T^\mp$.
}
\label{fig_T_pm_notation}
\end{figure}
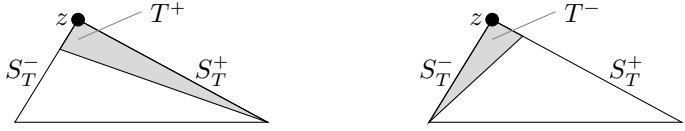

\begin{lemma}\label{lem_case2}
Let $h_T\ge\sqrt{6}\eps$.
For any triangle $T$ with a vertex $z$, there exist two functions
$\btau_{z;T}^+$ and $\btau_{z;T}^-$ in $T$ such that
\begin{subequations}\label{tau_z_case2_bounds}
\begin{eqnarray}
\label{tau_z_case2_bounds_a}
&\btau_{z;T}^\pm\cdot\bnu=0\;\;\mbox{on~}\pt T\backslash S_T^\pm,\qquad
\btau_{z;T}^\pm\cdot\bnu=\phi_z\;\;\mbox{on~}S_T^\pm,
\\[0.2cm]
\label{tau_z_case2_bounds_b}
&\|\eps\,{\rm div}\btau_{z;T}^\pm\|^2_{T}=\|\btau_{z;T}^\pm\|^2_{T}
={\textstyle \frac1{2\sqrt{6}}}\eps|S_T^\pm|\varsigma_{T;z}^{-1},
\quad\;\;
\varsigma_{z;T}:=
\sin\angle(S_T^-,S_T^+).
\end{eqnarray}
\end{subequations}
\end{lemma}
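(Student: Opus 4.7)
The plan is to build $\btau_{z;T}^+$ (and symmetrically $\btau_{z;T}^-$) supported on a sub-triangle $T^+\subset T$ whose vertices are $z$, the far endpoint $y^+$ of $S_T^+$, and the point $y^-\in S_T^-$ at distance $\eps':=\sqrt{6}\,\eps$ from $z$. Thus $T^+$ shares the entire edge $S_T^+$ with $T$ and has a short edge $[z,y^-]$ of length $\eps'$ along $S_T^-$. The hypothesis $h_T\ge\sqrt{6}\,\eps$ guarantees $\eps'\le|S_T^-|$, so that $y^-$ actually lies on $S_T^-$: indeed, the altitude of $T$ perpendicular to $S_T^+$ equals $|S_T^-|\,\varsigma_{z;T}$ and is bounded below by $h_T$, whence $|S_T^-|\ge h_T/\varsigma_{z;T}\ge h_T\ge\sqrt{6}\,\eps$.

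On $T^+$ I would set $\btau_{z;T}^+:=c_+\,\varsigma_{z;T}^{-1}\,\widetilde\phi_z\,\bmu^-$, where $\widetilde\phi_z$ is the hat function in $T^+$ equal to $1$ at $z$, $\bmu^-$ is the unit tangent along $S_T^-$ pointing away from $z$, and $c_+=\pm1$ is chosen so that $\btau_{z;T}^+\cdot\bnu=+\phi_z$ on $S_T^+$; extend by zero on $T\setminus T^+$. The boundary conditions \eqref{tau_z_case2_bounds_a} then follow by inspection: on $S_T^+$, the restrictions of $\widetilde\phi_z$ and $\phi_z$ coincide (both linear, agreeing at $z$ and $y^+$) and $\bmu^-\cdot\bnu=\pm\varsigma_{z;T}$, so the normal trace equals $\phi_z$; on the segment $[z,y^-]\subset S_T^-$ one has $\bmu^-\cdot\bnu=0$; on the internal edge $[y^+,y^-]$, which is opposite $z$ in $T^+$, the function $\widetilde\phi_z$ vanishes, so $\btau_{z;T}^+\in H(\operatorname{div},T)$ with zero normal trace across that edge; on the rest of $S_T^-$ and on all of $S_T$ the function is identically zero.

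For the norms, note that $\widetilde\phi_z$ decreases linearly from $1$ at $z$ to $0$ at $y^-$ over length $\eps'$, so $\bmu^-\!\cdot\!\nabla\widetilde\phi_z=-(\eps')^{-1}$, which gives $\operatorname{div}\btau_{z;T}^+=\mp(\eps'\,\varsigma_{z;T})^{-1}$, a constant on $T^+$. Combining this with $|T^+|=\tfrac12|S_T^+|\,\eps'\,\varsigma_{z;T}$ and the standard identity $\int_{T^+}\widetilde\phi_z^{\,2}=|T^+|/6$, a direct computation yields
$$
\|\btau_{z;T}^+\|_T^2=\frac{\eps'\,|S_T^+|}{12\,\varsigma_{z;T}},
\qquad
\|\operatorname{div}\btau_{z;T}^+\|_T^2=\frac{|S_T^+|}{2\,\eps'\,\varsigma_{z;T}}.
$$
The choice $\eps'=\sqrt{6}\,\eps$ is precisely the one that equates $\|\btau_{z;T}^+\|_T^2$ with $\|\eps\,\operatorname{div}\btau_{z;T}^+\|_T^2$, and both collapse to $\tfrac{1}{2\sqrt{6}}\,\eps\,|S_T^+|\,\varsigma_{z;T}^{-1}$, giving \eqref{tau_z_case2_bounds_b}.

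The construction of $\btau_{z;T}^-$ is entirely analogous after interchanging the roles of $S_T^+$ and $S_T^-$ (and of $\bmu^-$, $\bmu^+$). The only delicate point is essentially bookkeeping: choosing the signs $c_\pm$ correctly so that the normal trace is $+\phi_z$ rather than $-\phi_z$, and verifying that the geometric feasibility $\eps'\le|S_T^\mp|$ is indeed implied by the hypothesis $h_T\ge\sqrt{6}\,\eps$ via the altitude identity above.
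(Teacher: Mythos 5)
Your proof is correct and follows essentially the same construction as the paper: a hat-function-weighted tangential flux $\pm\varsigma^{-1}\widetilde\phi_z\,\bmu^{\mp}$ supported on the sub-triangle $T^{\pm}$ obtained by shrinking $T$ along $S_T^{\mp}$ to length $\eps'=\sqrt6\,\eps$, with identical norm identities. The one thing you add is an explicit verification that $T^{\pm}\subset T$ (via the altitude identity $|S_T^{\mp}|\varsigma_{z;T}=2|T|/|S_T^{\pm}|\ge h_T$), which the paper leaves implicit; the one thing you leave loose is the sign, which is in fact forced (\,$c_+=-1$, since $\bmu^-\cdot\bnu^+=-\varsigma_{z;T}$\,) rather than a free choice.
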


\begin{proof}
Let $\eps':=\sqrt{6}\eps$ and, skipping the subscripts when there is no ambiguity, set
$$
\btau^+:=-\varsigma^{-1} \varphi^+_z\bmu^-,\qquad
\btau^-:=\varsigma^{-1} \varphi^-_z\bmu^+.
$$
Here
$\varphi^+_z$ is a barycentric coordinate in the triangle $T^+$ formed by the edge $S^+_T=S^+$  and the point $z+\eps'\bmu^-$ such that
$\varphi^+_z\bigr|_z=1$; see Fig.\,\ref{fig_T_pm_notation}.
Similarly, $\varphi^-_z$ is a barycentric coordinate in the triangle $T^-$ formed by the edge $S^-_T=S^-$ and the point $z-\eps'\bmu^+$.

The boundary properties \eqref{tau_z_case2_bounds_a} are satisfied as $\bmu^\pm\cdot\bnu^\pm=0$ and $\varsigma=-\bmu^-\cdot\bnu^+=\bmu^+\cdot\bnu^-$.
Next,
using $|T^+|=\frac12\eps'|S^+|\varsigma$,
$$
\|\btau^+\|^2_{T}
={\textstyle \frac16}\varsigma^{-2}|T^+|
={\textstyle \frac1{2\sqrt{6}}}\eps\varsigma^{-1}|S^+|,
$$
while ${\rm div}\btau^+=-\varsigma^{-1}\pt_{\bmu^-} \varphi^+_z=\varsigma^{-1}\eps'^{-1}$ so
$\|\eps\,{\rm div}\btau^+\|^2_{T}=(\eps/\eps')^2\varsigma^{-2}|T^+|=\|\btau^+\|^2_{T}$.
\end{proof}
\medskip

\begin{remark}[Version of ${\mathcal A}2$]\label{rem_sqrt6}
In ${\mathcal A}2$, one can impose that
each $z\in\mathcal N$ with $h_z\gtrsim \eps$
 satisfies $\mathring{h}_z\ge c'\eps$ for any fixed positive constant $c'$ (rather than $\mathring{h}_z\ge \sqrt{6}\eps$).
 For this case, one can employ a version of the above lemma  under the condition $h_T\ge c'\eps$.
 Choosing $\eps':=c'\eps$ in the proof, one, indeed, arrives at the following version of
 \eqref{tau_z_case2_bounds_b}:
 $$
\|\eps\,{\rm div}\btau_{z;T}^\pm\|^2_{T}=6c'^{-2}\|\btau_{z;T}^\pm\|^2_{T}
=(2c')^{-1}\eps|S_T^\pm|\varsigma_{T;z}^{-1}\,.
$$
\end{remark}

\newpage
\begin{lemma}\label{lem_case22}\color{blue}
For any triangle $T$ with a vertex $z$ and its opposite edge $S_T$ satisfying $|S_T|>4\sqrt{6}\eps$, there exists a function $\varphi_{z;T}\in C(\bar T)$ such that
\beq\label{varphi_zT}
\varphi_{z;T}=\phi_z\;\;\mbox{on~}\pt T, \qquad
\|\eps\,{\rm div}(\varphi_{z;T}\,\bmu_T)\|^2_{T}\le\|\varphi_{z;T}\|^2_{T}
={\textstyle \frac2{\sqrt{6}}}\eps|T||S_T|^{-1}\lesssim \eps H_T.
\eeq
\end{lemma}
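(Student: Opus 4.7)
The plan is to imitate the barycentric-coordinate construction of Lemma~\ref{lem_case2}: I introduce an interior point $q\in T$, subdivide $T$ into three sub-triangles by joining $q$ to the three vertices of $T$, and take $\varphi_{z;T}$ to be the continuous piecewise-linear function equal to $1$ at $z$ and to $0$ at $q$ and at the two endpoints of $S_T$. First I set up local coordinates placing $S_T$ on the $\xi$-axis from $(0,0)$ to $(|S_T|,0)$, so that $z=(x_z,h_T)$ with $h_T=2|T|/|S_T|$ and $\phi_z=\eta/h_T$ on $T$. Set $\eps_0:=2\sqrt{6}\,\eps\,h_T/|S_T|$; the hypothesis $|S_T|>4\sqrt{6}\,\eps$ yields $\eps_0<h_T/2$. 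I then choose $q$ as the midpoint of the horizontal slice of $T$ at height $y_q=h_T-\eps_0$, which is strictly inside $T$ and, by a shoelace computation, makes the two side sub-triangles $T^-$ (vertices $(0,0),q,z$) and $T^+$ (vertices $z,(|S_T|,0),q$) have equal area $|T^-|=|T^+|=|S_T|\eps_0/4$. The middle sub-triangle $T^0$ (vertices $(0,0),(|S_T|,0),q$) has all three boundary values zero, so $\varphi_{z;T}\equiv 0$ on it.

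Continuity across interior edges is automatic for a piecewise-linear function whose vertex values agree, and the identity $\varphi_{z;T}=\phi_z$ on $\pt T$ follows edge-by-edge: on $S_T$ both vanish, and on $S_T^\pm$ both are linear from $0$ at the base vertex to $1$ at $z$. Using $\int_{T'}\psi^2=|T'|/6$ for a barycentric coordinate $\psi$ and that $\varphi_{z;T}\equiv 0$ on $T^0$, the $L^2$ identity is
$$
\|\varphi_{z;T}\|_T^2=\tfrac{1}{6}(|T^-|+|T^+|)=\tfrac{1}{12}|S_T|\eps_0=\tfrac{2}{\sqrt{6}}\,\eps\,|T|/|S_T|,
$$
and $\lesssim \eps H_T$ follows from $|T|/|S_T|=h_T/2\le H_T/2$.

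For the divergence, $\bmu_T$ is constant and parallel to the $\xi$-axis, so ${\rm div}(\varphi_{z;T}\bmu_T)=\pt_\xi\varphi_{z;T}$. Solving the three vertex equations for the linear piece on each sub-triangle yields $\pt_\xi\varphi_{z;T}=\mp y_q/(2|T^\pm|)$ on $T^\pm$ and $0$ on $T^0$, hence
$$
\|\pt_\xi\varphi_{z;T}\|_T^2=\tfrac{y_q^2}{4}\bigl(|T^-|^{-1}+|T^+|^{-1}\bigr)=\tfrac{2y_q^2}{|S_T|\eps_0}
$$
using $|T^\pm|=|S_T|\eps_0/4$. Multiplying by $\eps^2$ and comparing with $\|\varphi_{z;T}\|_T^2=|S_T|\eps_0/12$, the target inequality reduces to $24\,\eps^2(h_T-\eps_0)^2\le|S_T|^2\eps_0^2$, which, after substituting $\eps_0=2\sqrt{6}\,\eps\,h_T/|S_T|$, becomes $(h_T-\eps_0)^2\le h_T^2$, and is trivial.

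The main subtlety I anticipate is the placement of $q$. A naive choice such as $q$ directly below $z$ would make $|T^-|$ and $|T^+|$ very unequal on a skewed triangle and blow up $|T^-|^{-1}+|T^+|^{-1}$. Placing $q$ at the midpoint of the horizontal slice balances $|T^-|=|T^+|$ and thereby (by AM--GM, applied to fixed sum $|T^-|+|T^+|$) maximises the product $|T^-||T^+|$, which is exactly what makes the divergence estimate tight. The quantitative hypothesis $|S_T|>4\sqrt{6}\,\eps$ provides the vertical clearance $y_q>h_T/2$ that keeps $q$ strictly interior to $T$ and the three sub-triangles non-degenerate.
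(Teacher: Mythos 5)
Your construction is exactly the paper's: your point $q$ (the midpoint of the horizontal slice at height $h_T-\eps_0$) lies on the median from $z$, and with $\eps_0=2\sqrt6\,\eps h_T/|S_T|$ your sub-triangles $T^\pm$ coincide with the paper's $T^\pm$ of area $\sqrt6\,\eps|T|/|S_T|$, so the two proofs differ only in that you write out coordinates and compute $\pt_\xi\varphi_{z;T}$ directly while the paper argues from $|\pt_{\bmu_T}\varphi_{z;T}|\le|S_T|^{-1}|T|/|T^\pm|$. Both the identity for $\|\varphi_{z;T}\|^2_T$ and the divergence bound check out, so the proposal is correct and follows the paper's approach.
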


\begin{proof}\color{blue}
Introduce the two triangles $T^-,T^+\subset T$, each $T^\pm$ formed by the edge $S_T^\pm$ and  a common vertex lying on the median of $T$ originating at $z$,
subject to $|T^\pm|=\sqrt{6}\eps|T||S_T|^{-1}<\frac14|T|$ (see Fig.\,\ref{fig_T_pm_notation_2}).
Now, define a unique $\varphi_{z;T}\in C(\bar T)$ that (i) satisfies $\varphi_{z;T}=\phi_z$ on $\pt T$; (ii) has support in $T^-\cup T^+$;
(ii) is linear in each $T^\pm$.
%
Clearly, $\|\varphi_{z;T}\|^2_{T}=\frac16(|T^-|+|T^+|)$. Furthermore, a calculation shows that
$|{\rm div}(\varphi_{z;T}\,\bmu_T)|=|\pt_{\bmu_T}\varphi_{z;T}|\le |S_T|^{-1}|T|/|T^{\pm}|$
so
$|\eps\,{\rm div}(\varphi_z\bmu_T)|\le \frac1{\sqrt{6}}$ in $T^-\cup T^+$.
Combining these observations, one gets \eqref{varphi_zT}
(with the final relation in \eqref{varphi_zT} easily following from $|T|\sim h_T H_T\lesssim |S_T| H_T$).
\end{proof}

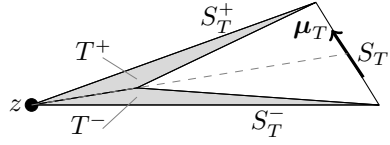
\begin{figure}[t!]
~\hfill
\begin{tikzpicture}[scale=0.21]
\path[draw, fill=gray!30!white] (0,0)--(18,6.5)--(6.67,1.08)--(22,0)--cycle;
\draw[fill] (0,0) circle [radius=0.4];
\path[draw]  (0,0)node[left] {$z$}--(18,6.5)--(22,0)--cycle;
\path[draw]  (0,0)--(6.67,1.08)--(22,0);
\path[draw]  (0,0)--(6.67,1.08)--(18,6.5);

\draw[very thick, ->] (21,1.75) -- (19,4.75) ;
\node[right] at (20,3.25) {$S_T$};
\node[below] at (15,0.5) {$S_T^-$};
\node[above] at (11.8,4) {$S_T^+$};
\node[left] at (19.4,4.75) {${\bmu}_T$};

\node[left] at (5.4,-1.2) {$T^-$};
\node[left] at (5.7,3.5) {$T^+$};

\draw[help lines,dashed] (0,0)--(20,3.25);

\draw[help lines] (6.67,0.5)--(4.5,-1.5);
\draw[help lines] (6.67,1.64)--(4.5,3);
\end{tikzpicture}
\hfill~
\caption{\color{blue}Notation used in 
Lemma~\ref{lem_case22}: each triangle $T^\pm\!\!\subset T$ is formed by the edge $S_T^\pm$
and a common vertex lying on the median of $T$ originating at $z$.}
\label{fig_T_pm_notation_2}
\end{figure}

\begin{figure}[b!]
~\hfill
\begin{tikzpicture}[scale=0.22]
\path[draw, fill=gray!30!white] (9.5,7)--(18,5)--(19,6.7)--(18.5,8.5)--cycle;
\path[draw, fill=gray!30!white](1,6.7)--(9.5,7)--(1.4,8.7)--cycle;
\draw[fill] (9.5,7) circle [radius=0.4];
\path[draw]  (8,5.1)--(18,5)--(19,6.7)--(18.5,8.5)--(9.1,9)--(1.4,8.7)--(1,6.7)--cycle;
\path[draw] (8,5.1)--(9.5,7)--(18,5);
\path[draw] (19,6.7)--(9.5,7)--(18.5,8.5);
\path[draw] (9.1,9)--(9.5,7)--(1,6.7)--(9.5,7)--(1.4,8.7);
%
\path[draw, fill=gray!30!white] (4,2.4)--(16,0)--(17,1.7)--(15.8,3.5)--cycle;
\draw[fill] (4,2.4) circle [radius=0.4];
\path[draw]  (4,0.7)--(16,0)--(17,1.7)--(15.8,3.5)--(4,4.5)--(4,2.4)--cycle;
\path[draw] (4,2.4)--(16,0);
\path[draw] (17,1.7)--(4,2.4)--(15.8,3.5);
\path[draw,ultra thick
] (4,0)--(4,5);
\end{tikzpicture}
\hfill
\begin{tikzpicture}[scale=0.22]
\path[draw, fill=gray!30!white] (9.5,7)--(20,5)--(21,6.7)--(20.5,8.5)--cycle;
\draw[fill] (9.5,7) circle [radius=0.4];
\path[draw]  (10,4.5)--(20,5)--(21,6.7)--(20.5,8.5)--(9.1,9)--(7,7.7)--(7.1,5)--cycle;
\path[draw] (10,4.5)--(9.5,7)--(20,5);
\path[draw] (21,6.7)--(9.5,7)--(20.5,8.5);
\path[draw] (9.1,9)--(9.5,7)--(7,7.7);
\path[draw] (7.1,5)--(9.5,7);
%
\path[draw, fill=gray!30!white] (9.5,0)--(20.5,1.8)--(21,0)--cycle;
\draw[fill] (9.5,0) circle [radius=0.4];
\path[draw]  (7,0)--(7.5,2.4)--(10,2.9)--(20.5,1.8)--(21,0)--cycle;
\path[draw] (7.5,2.4)--(9.5,0)--(20.5,1.8);
\path[draw] (10,2.9)--(9.5,0);
\path[draw,ultra thick
] (6,0)--(22,0);
\end{tikzpicture}
\hfill
\begin{tikzpicture}[scale=0.21]
\path[draw, fill=gray!30!white] (9.5,2.2)--(20.2,0.5)--(21,2.7)--(20.5,4.5)--cycle;
\draw[fill] (9.5,2.2) circle [radius=0.4];
\path[draw]  (10,0)--(20.2,0.5)--(21,2.7)--(20.5,4.5)--(9.1,10)--(7.1,3.5)--(7.3,0.7)--cycle;
\path[draw] (10,0)--(9.5,2.2)--(20.2,0.5);
\path[draw] (21,2.7)--(9.5,2.2)--(20.5,4.5);
\path[draw] (9.1,10)--(9.5,2.2)--(7.1,3.5);
\path[draw] (7.3,0.7)--(9.5,2.2);
\end{tikzpicture}
\hfill~
\caption{For various nodes, the set $\omega_z^*$ (defined in \eqref{omega_star_case2}) is highlighted by the grey color. 
}
\label{fig_node_types_omega_z_star}
\end{figure}
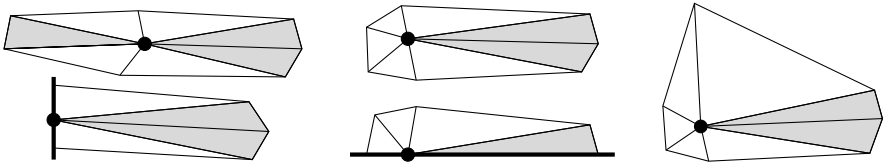

\subsection{Definition of $\btauz$ for $h_z\gtrsim \eps$}\label{ssec_case2}
%
Introduce a subset $\omega_z^*$ of $\omega_z$
{\color{blue}and, using $\varphi_{z;T}$ from Lemma~\ref{lem_case22}, a related function $\phi_z^*$:}
\beq\label{omega_star_case2}
\omega_z^*:=
\Bigl\{T\subset\omega_z:
\color{blue} |S_T|\sim h_T\ll H_T
%
%
\Bigr\},
\quad
\phi_z^*:=
\left\{\begin{array}{cl}
\varphi_{z;T}
&\mbox{on~~}T\subset \omega_z^* : |S_T|>4\sqrt{6}\eps,\\
\phi_z &\mbox{otherwise}.
\end{array}\right.
\eeq
Thus, 
$\omega_z^*$ includes only triangles with extremely small angles at $z$, so $\omega_z\backslash\omega_z^*\neq\emptyset$
(see Fig.\,\ref{fig_node_types_omega_z_star}).
{\color{blue}Note also that $\phi_z^*=\phi_z$ on $\gamma_z$.}
Now,
using  $\btau_{z;T}^\pm$ from Lemma~\ref{lem_case2},
set
\beq\label{tau_z_case2}
\btauz:=\left\{\begin{array}{ll}
{\textstyle\frac12}\Bigl({\mathcal J}_z\bigr|_{S^-_T}\btau_{z;T}^-+{\mathcal J}_z\bigr|_{S^+_T} \btau_{z;T}^+\Bigr)
&\mbox{for~~}T\subset \omega_z\backslash\omega_z^*,\\[0.3cm]
(\beta_T d_T^{-1})\,{\color{blue}\phi_z^*}\,\bmu_T,\;\;\;
d_T:=2|T|\,|S_T|^{-1}
&\mbox{for~~}T\subset \omega_z^*,
\end{array}\right.
\eeq
where, with the convention $[\partial_\bnu u_h]_{\pt\Omega}:=0$,
\beq\label{cal_J}
{\mathcal J}_z\bigr|_{S^\pm_T}:=\left\{\begin{array}{cl}
\bigl[\partial_\bnu u_h\bigr]_{S_T^\pm}&\mbox{if~~}S_T^\pm\not\subset\pt\omega_z^*,\\[0.3cm]
\displaystyle
|S_T^\pm|^{-1}\sum_{S\in\gamma_{z;T}^\pm} |S|\,\bigl[\partial_\bnu u_h\bigr]_{S}
&\mbox{otherwise}.
\end{array}\right.
\eeq
To define $\gamma_{z;T}^\pm$ in \eqref{cal_J}, it is convenient to assume that $\omega_z^*$ includes triangles with their boundaries.
Now, let $\omega_{z;T}^\pm$ be the maximal connected subset of $\omega_z^*\backslash\{z\}$  that shares the edge $S^\pm_T$ with $T$.
The set of all edges originating  at $z$ that are contained in this subset $\omega_{z;T}^\pm$ (including $S^\pm_T$) is denoted $\gamma_{z;T}^\pm$.

The unique set of values $\{\beta_T\}$ for $T\subset\omega_z^*$ in \eqref{tau_z_case2} is chosen to satisfy~\eqref{tau_n}.
For example,
consider a bundle of $m$ triangles $\omega_{z;T}^+=\omega_{z;T'}^-=\{T_i\}_{i=1}^m $, numbered counterclockwise,
that touches $T,T'\subset \omega_z\backslash\omega_z^*$.
Now, \eqref{tau_n} is equivalent to a version of \eqref{tau_n_hat}:
\begin{subequations}\label{tau_n_hat_version_ab}
\beq\label{tau_n_hat_version}
\beta_{i-1}-\beta_{i}
=|S_{i}^-| [\partial_\bnu u_h]_{\pt T_{i-1}\cap \pt T_i},
\quad i=1,\ldots, m+1,
\eeq
where
the notation $\beta_i:=\beta_{T_i}$ is used for $i=1,\ldots,m$, while
\beq\label{beta_0m}
\beta_0=-\beta_{m+1}:={\textstyle\frac12} \sum_{i=1}^{m+1}|S_{i}^-| [\partial_\bnu u_h]_{\pt T_{i-1}\cap \pt T_i}
={\textstyle\frac12}|S^+_{T}|\,{\mathcal J}_z\bigr|_{S^+_{T}}
={\textstyle\frac12}|S^-_{T'}|\,{\mathcal J}_z\bigr|_{S^-_{T'}}.
\eeq
\end{subequations}
Note that the above system involves $m+1$ equations for $\{\beta_i\}_{i=1}^m$, but is consistent and has a unique solution.
This becomes clear on application of $\sum_{i=1}^{m+1}$ to \eqref{tau_n_hat_version} which yields a relation for
$\beta_{0}-\beta_{m+1}$ consistent with \eqref{beta_0m}.

If for a bundle of $m$ triangles $\cup_{i=1}^m T_i=\omega_{z;T}^+$, numbered counterclockwise,  one has $S^+_{m}\subset\pt\Omega$,
then we use \eqref{tau_n_hat_version} with $i\neq m+1$, and $\beta_0$ from \eqref{beta_0m} (while $\beta_{m+1}$ remains undefined).
Similarly, if $S^-_{1}\subset\pt\Omega$, then use \eqref{tau_n_hat_version} with $i\neq 1$ combined with the definition of $\beta_{m+1}$ from
\eqref{beta_0m} (and $\beta_{0}$ remaining undefined).
\smallskip

\begin{remark}[$\gamma_z\cap{\mathcal S}^*\neq\emptyset$]\label{rem_kappa_case2}
\color{blue}Note that ${\mathcal S}^*$, defined by \eqref{S_star}, can be chosen so that
$\gamma_z\cap{\mathcal S}^*=\emptyset$ whenever $h_z\gtrsim \eps$ under condition ${\mathcal A}2$
(as then $h_T \gtrsim \eps$).
If, however, $\gamma_z\cap{\mathcal S}^*\neq\emptyset$, then the non-trivial jumps $[\btauS^J\cdot \bnu]$ across $\gamma_z$
are easily taken into account by replacing
$[\partial_\bnu u_h]_{\pt T_{i-1}\cap \pt T_i}$
with
$[\partial_\bnu u_h]_{\pt T_{i-1}\cap \pt T_i}+|S_{i}^-|^{-1}(\kappa_{i-1}-\kappa_{i+1})$
in \eqref{cal_J} and \eqref{tau_n_hat_version_ab}
(where $\kappa_i$ is defined in \eqref{beta_star}).
With this modification, Lemma~\ref{lem_theo_bound_case2} below remains valid as $|\kappa_i|\lesssim H_z|J_z|\;\forall i$
(the latter follows from \eqref{btauS}).
\end{remark}

\subsection{Proof of \eqref{tau_z_main} in Theorem~\ref{theo_main_bounds} for $h_z\gtrsim \eps$}

It suffices to prove the following.
\smallskip
\begin{lemma}\label{lem_theo_bound_case2}
Under condition ${\mathcal A}2$, for any $z\in{\mathcal N}$ with $h_z\gtrsim \eps$, the function
$\btauz$ defined by \eqref{tau_z_case2_bounds},\,{\color{blue}\eqref{varphi_zT}},\,\eqref{omega_star_case2}\,\eqref{tau_z_case2},\,\eqref{cal_J},\,\eqref{tau_n_hat_version_ab} satisfies \eqref{tau_n} and
\beq\label{theo_bound}
\|\eps^2\,{\rm div}\btauz\|^2_{\color{blue}\omega_z}+\|\eps\btauz\|^2_{\color{blue}\omega_z}
%
\color{blue}
\lesssim
\sum_{S\in \gamma_z}\!\!\eps |S|\,\,\bigl(\eps[\pt_\bnu u_h]_S\bigr)^2
\lesssim\eps h_z^{-1}\,\|\eps J_z\|^2_{\omega_z}
\,,
\vspace{-0.2cm}
\eeq
{\color{blue}where $\eps |S|\sim\min\{\eps |S|,\,|\omega_z|\}$ for any $S\in \gamma_z$.}
\end{lemma}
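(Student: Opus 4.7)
The plan is to split the argument into two main parts: first verify the normal-jump condition \eqref{tau_n} edge-by-edge, then bound the element-wise norms and sum them up.

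\textbf{Step 1 (verification of \eqref{tau_n}).} I would classify edges $S\in\gamma_z$ according to whether the adjacent triangles lie in $\omega_z^*$. If both adjacent triangles lie in $\omega_z\setminus\omega_z^*$, then by \eqref{tau_z_case2_bounds_a} each side contributes $\tfrac12{\mathcal J}_z|_S\phi_z=\tfrac12[\pt_\bnu u_h]_S\phi_z$ (since \eqref{cal_J} reduces to the plain bracket), giving the required jump. If $S=\pt T_{i-1}\cap\pt T_i$ lies strictly inside a bundle in $\omega_z^*$, the identities $\bmu_{T_i}\cdot\bnu|_{S_i^\pm}=\pm d_i/|S_i^\pm|$ together with $\phi_z^*=\phi_z$ on $\pt T_i$ (from \eqref{varphi_zT}) yield the jump $\phi_z(\beta_{i-1}-\beta_i)/|S_i^-|$, which matches $\phi_z[\pt_\bnu u_h]_S$ by \eqref{tau_n_hat_version}. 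Finally, at a transition edge $S=S_T^+=S_1^-$ joining $T\subset\omega_z\setminus\omega_z^*$ to a bundle, the contribution from $T$ equals $\tfrac12{\mathcal J}_z|_{S_T^+}\phi_z$ while that from $T_1$ equals $-\beta_1\phi_z/|S_1^-|$; the prescription $\beta_0=\tfrac12|S_T^+|{\mathcal J}_z|_{S_T^+}$ from \eqref{beta_0m} combined with \eqref{tau_n_hat_version} at $i=1$ produces exactly $\phi_z[\pt_\bnu u_h]_S$, with a symmetric argument at the other end using $\beta_{m+1}=-\beta_0$.

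\textbf{Step 2 (element norms).} For $T\subset\omega_z\setminus\omega_z^*$, the crucial observation is $\varsigma_{z;T}^{-1}\lesssim 1$: either $T$ is isotropic (and the Maximum Angle condition applies directly), or $T$ is anisotropic but $T\not\subset\omega_z^*$ forces the short edge not to lie opposite $z$, so the angle at $z$ is bounded away from $0$ and $\pi$. Lemma~\ref{lem_case2} then delivers
\begin{equation*}
\|\eps\btauz\|_T^2+\|\eps^2{\rm div}\btauz\|_T^2\lesssim\sum_\pm\eps|S_T^\pm|\bigl(\eps{\mathcal J}_z|_{S_T^\pm}\bigr)^2.
\end{equation*}
For $T\subset\omega_z^*$, the identity ${\rm div}(\phi_z\bmu_T)=0$ (when $|S_T|\le 4\sqrt6\eps$, so $\phi_z^*=\phi_z$) with $|T|\sim\eps H_z$, or Lemma~\ref{lem_case22} (when $|S_T|>4\sqrt6\eps$), combined with $d_T\sim H_z$, gives
\begin{equation*}
\|\eps\btauz\|_T^2+\|\eps^2{\rm div}\btauz\|_T^2\lesssim\eps^3\beta_T^2 H_z^{-1}.
\end{equation*}
Telescoping \eqref{tau_n_hat_version_ab}, with $|S_j^-|\sim H_z$ and the bounded bundle size, yields $|\beta_T|\lesssim H_z|J_z|$, so the above is $\lesssim\eps^3 H_z|J_z|^2$.

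\textbf{Step 3 (summation).} Contributions of edges $S_T^\pm\not\subset\pt\omega_z^*$ already appear in the target sum. For $S_T^\pm\subset\pt\omega_z^*$, using $|S_T^\pm|\sim H_z$, $|\gamma_{z;T}^\pm|=O(1)$, and Cauchy--Schwarz on \eqref{cal_J} gives
\begin{equation*}
\eps|S_T^\pm|\bigl(\eps{\mathcal J}_z|_{S_T^\pm}\bigr)^2\lesssim\sum_{S\in\gamma_{z;T}^\pm}\eps|S|(\eps[\pt_\bnu u_h]_S)^2,
\end{equation*}
again a subsum of the target. The $T\subset\omega_z^*$ contributions sum to $\lesssim\eps^3H_z|J_z|^2$, matched by the RHS through $\|\eps J_z\|_{\omega_z}^2\sim\eps^2|J_z|^2 h_z H_z$. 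Since each edge of $\gamma_z$ is counted $O(1)$ times, the first inequality of \eqref{theo_bound} follows; the second follows from $|S|\lesssim H_z$, $|\gamma_z|=O(1)$, $|\omega_z|\sim h_z H_z$, and $h_z\gtrsim\eps$, which also yields the relation $\eps|S|\sim\min\{\eps|S|,|\omega_z|\}$.

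The hardest part is the transition-edge verification in Step 1: one must check that the single prescription $\beta_0=\tfrac12|S_T^+|{\mathcal J}_z|_{S_T^+}$ simultaneously produces the correct jumps at both endpoints of each bundle and is compatible with the solvability of the overdetermined system \eqref{tau_n_hat_version_ab} (the closing identity $\beta_0=-\beta_{m+1}$ arising from the averaging in \eqref{cal_J} is exactly what makes that system consistent). The remainder of the argument reduces to careful bookkeeping and the basic geometric identity $|\omega_z|\sim h_z H_z$.
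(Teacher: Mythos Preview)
Your proof is essentially correct and follows the same route as the paper: verify \eqref{tau_n} from the construction in \S\ref{ssec_case2}, then estimate $\|\eps\btauz\|_T^2+\|\eps^2{\rm div}\btauz\|_T^2$ element-by-element using Lemma~\ref{lem_case2} on $\omega_z\setminus\omega_z^*$ (with the key observation $\varsigma_{z;T}^{-1}\sim 1$) and Lemma~\ref{lem_case22} or ${\rm div}(\phi_z\bmu_T)=0$ on $\omega_z^*$. Your Step~1 is more explicit than the paper's treatment of \eqref{tau_n}, and your Cauchy--Schwarz handling of ${\mathcal J}_z$ in Step~3 is a clean alternative to the paper's linearity-by-edge decomposition.

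There is, however, one genuine gap in Step~3. You bound the $\omega_z^*$ contribution by $\eps^3 H_z|J_z|^2$ with the \emph{full} maximum $J_z$, then claim this is absorbed into $\sum_{S\in\gamma_z}\eps|S|(\eps J_S)^2$. But if the maximum $|J_S|$ is attained on a \emph{short} edge $S^*$ (with $|S^*|\sim\mathring h_z\ll H_z$), the corresponding target term is only $\eps^3\mathring h_z J_z^2$, which does not dominate $\eps^3 H_z J_z^2$. The fix is immediate once you note that the edges $S_j^-$ appearing in \eqref{tau_n_hat_version_ab} all satisfy $|S_j^-|\sim H_z$ (they are edges at $z$ of triangles in or bordering $\omega_z^*$, hence long). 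Consequently $|\beta_T|\lesssim H_z\,J_z^{\rm long}$ with $J_z^{\rm long}:=\max_{|S|\sim H_z}|J_S|$, and then
\[
\eps^3 H_z\,(J_z^{\rm long})^2\;\le\;\eps^3 H_z\!\!\sum_{|S|\sim H_z}\!\!J_S^2\;\sim\;\sum_{|S|\sim H_z}\eps|S|(\eps J_S)^2,
\]
which is a subsum of the first target in \eqref{theo_bound}. The paper avoids this issue by first isolating the contribution of each short-edge jump $J_S$ via the linearity of $\btauz$ in $\{J_S\}$, showing it is $\lesssim\eps|S|(\eps J_S)^2$, and only then invoking the cruder element-wise bound for the remainder (which by construction involves only long-edge jumps).
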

\smallskip

\begin{proof}
Condition \eqref{tau_n} is satisfied by the construction of $\btauz$ in \S\ref{ssec_case2},
so it remains to establish \eqref{theo_bound}.

\color{blue}
First, for each fixed $S\in \gamma_z\cap\mathring{\mathcal S}_z$ (i.e. $ |S|\sim \mathring{h}_z$), we shall trace the contribution  of
$[\pt_\bnu u_h]_S$ to $\btauz$ of \eqref{tau_z_case2}. 
In this case, $[\pt_\bnu u_h]_S$ is involved in $\btauz$
only on the triangles adjacent to $S$ (such triangles are not in $\omega_z^*$)
in the form of  the terms ${\mathcal J}_z\bigr|_{S}=[\pt_\bnu u_h]_S$. 
Hence, the contribution of the considered $[\pt_\bnu u_h]_S$
to the left-hand side of \eqref{theo_bound}
is indeed bounded by $\eps |S|(\eps[\pt_\bnu u_h]_S)^2$, as can be shown by an application of \eqref{tau_z_case2_bounds_b} with $|S_T^\pm|=|S|$ and $\varsigma_{z;T}^{-1}\sim 1$.
Furthermore,
 $|S|\lesssim H_z$ implies $\eps |S|\lesssim \eps h_z^{-1}|\omega_z|$ and so
$\eps |S|(\eps[\pt_\bnu u_h]_S)^2
\lesssim\,\eps h_z^{-1}\|\eps[\pt_\bnu u_h]_S\|^2_{\omega_z}$.

It remains to bound the contribution to the left-hand side of \eqref{theo_bound} of
$[\pt_\bnu u_h]_S$ for the edges $S\not\in \mathring{\mathcal S}_z$.
In this case, $|S|\sim H_z$, so 
$\eps |S|\bigl(\eps[\pt_\bnu u_h]_S\bigr)^2
\sim\,\eps h_z^{-1}\bigl\|\eps[\pt_\bnu u_h]_S\bigr\|^2_{\omega_z}$.
This observation implies that it now suffices to prove only the second relation in \eqref{theo_bound}, or, equivalently, show that
$\|\eps\,{\rm div}\btauz\|^2_{T}+\|\btauz\|^2_{T}\lesssim\eps H_z| J_z|^2$ for any $T\subset\omega_z$, to which we proceed.

Suppose $T\subset\omega_z^*$.
By \eqref{omega_star_case2},
$d_T\sim H_T\sim H_z$, and, by \eqref{tau_n_hat_version_ab}, $|\beta_T|\lesssim H_z|J_z|$, so
$|d_T^{-1}\beta_T|\lesssim |J_z|$.
Now, if $\phi_z^*=\varphi_{z;T}$ in $T$, then the desired bound on $\|\eps\,{\rm div}\btauz\|^2_{T}+\|\btauz\|^2_{T}$ follows from \eqref{varphi_zT}
combined with $\eps H_T\le\eps H_z$.
Otherwise, $\phi_z^*=\phi_z$ in $T$ implies  ${\rm div}\btauz=0$ (see the proof of Lemma~\ref{lem_tau_case1}), and also $h_T\lesssim \eps$ (in view of the definition of $\phi_z^*$ in \eqref{omega_star_case2}),
so again $\|\btauz\|^2_{T}\lesssim |T||J_z|^2\lesssim \eps H_z|J_z|^2$.
Finally, suppose $T\subset\omega_z\backslash\omega_z^*$.
Note that $|{\mathcal J}_z|\lesssim|J_z|$, which follows from \eqref{cal_J} as $|S|\sim |H_T|$ for all edges
$S\in \gamma_{z;T}^\pm $ including $S_T^\pm$.
Now,  the desired bound on $\|\eps\,{\rm div}\btauz\|^2_{T}+\|\btauz\|^2_{T}$ follows from \eqref{tau_z_case2_bounds_b} with $|S_T^\pm|\lesssim H_z$ and $\varsigma_{z;T}^{-1}\sim 1$.
\end{proof}\vspace{-0.05cm}

\section{Numerical results}
\label{ssec_numer}

\begin{figure}[!b]
~\hfill
\includegraphics[width=0.4\textwidth]{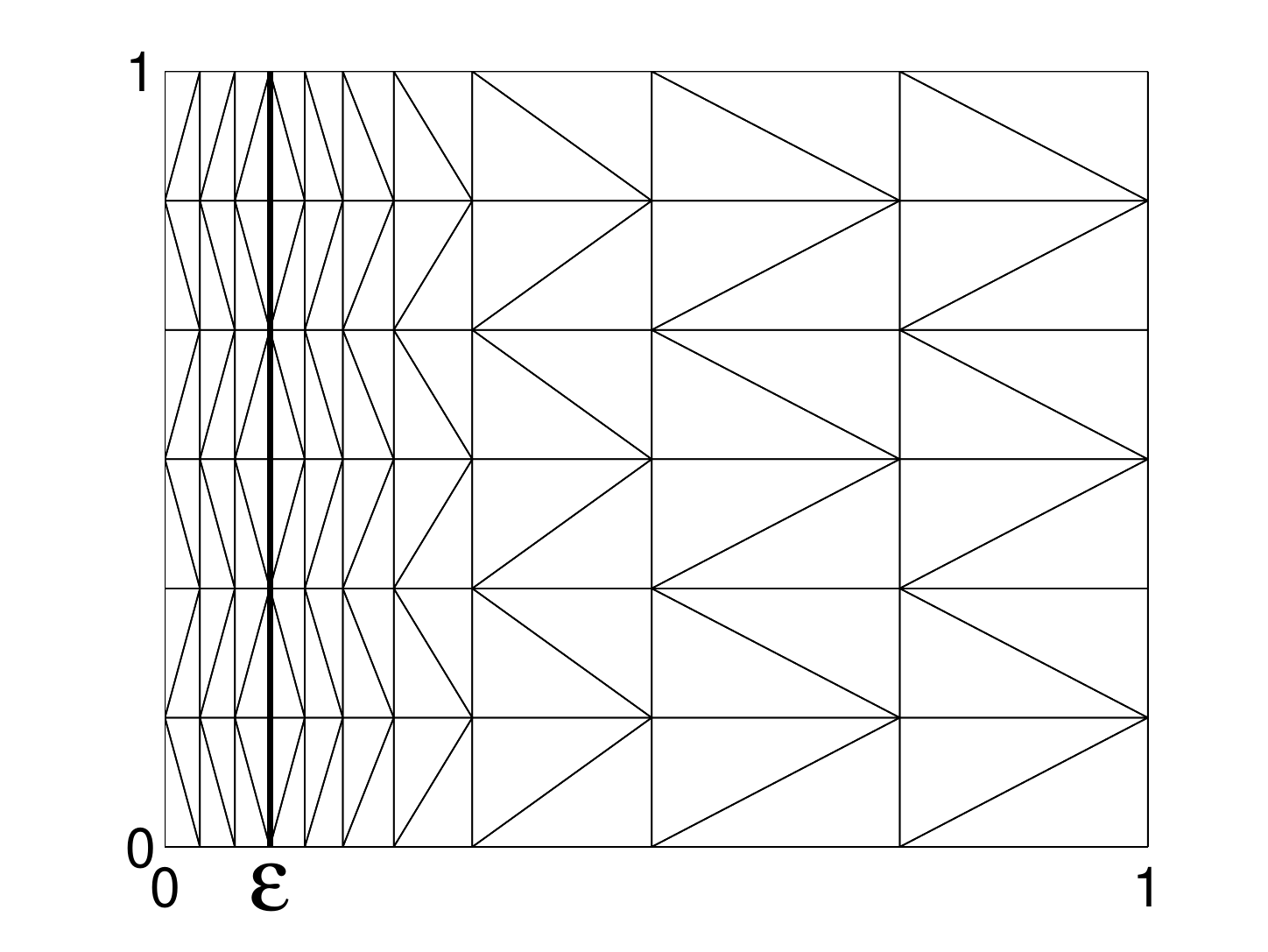}
\includegraphics[width=0.4\textwidth]{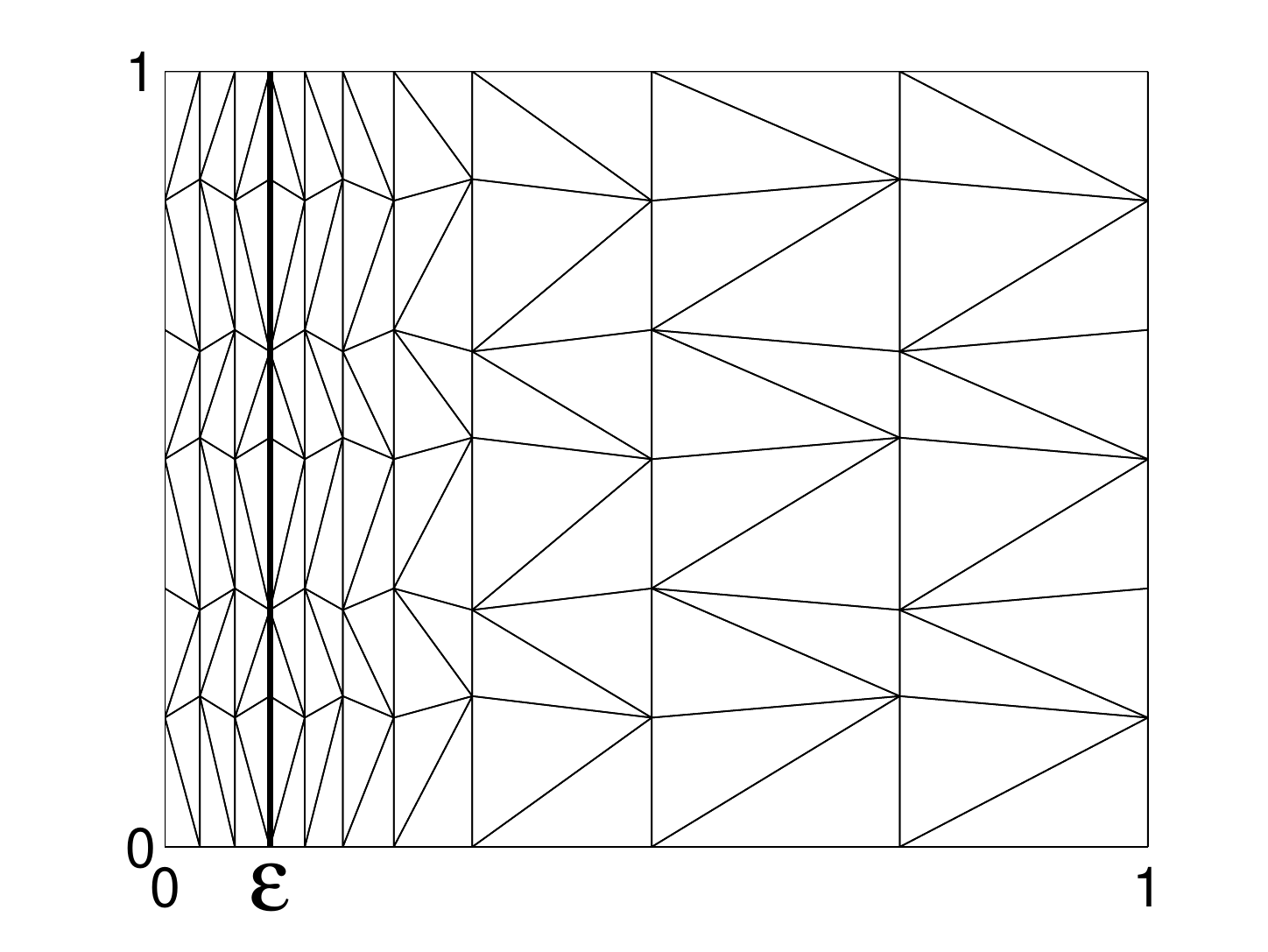}
\hfill~
\vspace{-0.3cm}
\caption{\color{blue}Non-obtuse triangulation used in \S\ref{ssec_numer} (left);
its version with obtuse triangles (right).
}
\label{fig_partial_new}
\end{figure}

Our estimator is tested
using a simple version of (\ref{eq1-1}) with $\Omega=(0,1)^2$ and
$f=u-F(x,y)$, where $F$ is such that the unique exact solution
$u=4y\,(1-y)\,[C_u\cos(\pi x/2)- (e^{-x/\eps}-e^{-1/\eps})/(1-e^{-x/\eps})]$
(the latter exhibits a sharp boundary layer at $x=0$);
{\color{blue}the constant parameter $C_u$ in $u$ will take values $1$ and $0$.}
We consider {\color{blue}an a-priori-chosen layer-adapted non-obtuse triangulation, as on Fig.\,\ref{fig_partial_new} (left)},
which is obtained by drawing diagonals from the tensor product of
the Bakhvalov grid $\{\chi(\frac{i}{N})\}_{i=1}^N$ in the $x$-direction \cite{Bak}
and a uniform grid $\{\frac{j}{M}\}_{j=0}^M$ in the $y$-direction with $M=\frac12 N$.
The continuous mesh-generating function $\chi(t)=t$ if $\eps > \frac16$;  otherwise,
$\chi(t)=3\eps \ln\frac1{1 - 2t}$ for $t\in (0, \frac12-3\eps)$ and is linear elsewhere subject to $\chi(1)=1$.
{\color{blue}Furthermore, to test our estimator on a mesh with obtuse triangles and, in particular,
the role of the estimator components $\btauS^J$ in \eqref{tau_def} for $S\in{\mathcal S}^*$,
we distort the initial non-obtuse triangulation by moving some of the nodes upwards/downwards
by $\min\{\mathring{h}_z,\,\frac18 H_z\}$; see Fig.\,\ref{fig_partial_new} (right).}

{
\begin{table}[tbp]
\caption{
 %
 %
 \color{blue}Test problem with $C_u=1$,
 non-obtuse triangulation (see Fig.\,\ref{fig_partial_new}, left).
 %
}\label{table1_new}
\tabcolsep=6pt
\vspace{-0.2cm}
{\small\hfill
\begin{tabular}{r|rrrrrrr}
\hline
\strut\rule{0pt}{9pt}$ N$&
$\varepsilon=1$& $\varepsilon=2^{-5}$&$\varepsilon=2^{-10}$&
$\varepsilon=2^{-15}$& $\varepsilon=2^{-20}$&$\varepsilon=2^{-25}$&$\varepsilon=2^{-30}$\\
\hline
\strut&
\multicolumn{7}{l}{\rule{0pt}{8pt}%
Errors $\vvvert u_h-u \vvvert_{\eps\,;\Omega}$ 
}
\\
\strut\rule{0pt}{11pt}%
%
%
 64&3.203e-2	&5.204e-3	&1.065e-3	&6.734e-4	&6.576e-4	&6.571e-4	&6.571e-4	\\
128&1.602e-2	&2.594e-3	&4.534e-4	&1.797e-4	&1.641e-4	&1.636e-4	&1.636e-4	\\
256&8.011e-3	&1.296e-3	&2.157e-4	&5.533e-5	&4.133e-5	&4.081e-5	&4.080e-5	\\
512&4.006e-3	&6.479e-4	&1.062e-4	&2.130e-5	&1.071e-5	&1.020e-5	&1.019e-5	\\
\hline
\strut&
\multicolumn{7}{l}{\rule{0pt}{8pt}%
\color{blue}${\mathcal S}^*=\emptyset$ in \eqref{tau_def}:~
Estimators
(odd rows) ~\&~
  Effectivity Indices (even rows)
  }
\\
\strut\rule{0pt}{11pt}%
%
%
 64&3.301e-2	&6.994e-3	&1.325e-3	&6.878e-4	&6.581e-4	&6.572e-4	&6.571e-4	\\
&1.031	&1.344	&1.244	&1.021	&1.001	&1.000	&1.000	\\
128&1.647e-2	&2.698e-3	&6.007e-4	&1.928e-4	&1.645e-4	&1.636e-4	&1.636e-4	\\
&1.028	&1.040	&1.325	&1.073	&1.003	&1.000	&1.000	\\
256&8.232e-3	&1.335e-3	&2.928e-4	&6.541e-5	&4.178e-5	&4.083e-5	&4.080e-5	\\
&1.028	&1.030	&1.357	&1.182	&1.011	&1.000	&1.000	\\
512&4.115e-3	&6.668e-4	&1.460e-4	&2.753e-5	&1.115e-5	&1.022e-5	&1.019e-5	\\
&1.027	&1.029	&1.375	&1.292	&1.041	&1.001	&1.000	\\
\hline
\end{tabular}\hfill}
\end{table}
}

{
\begin{table}[tbp]
\caption{
 \color{blue}Test problem with $C_u=1$,
 mesh with obtuse triangles (see Fig.\,\ref{fig_partial_new}, right).
 %
%
}\label{table2_newnew}
\tabcolsep=6pt
\vspace{-0.2cm}
{\small\hfill
\begin{tabular}{r|rrrrrrr}
\hline
\strut\rule{0pt}{9pt}$ N$&
$\varepsilon=1$& $\varepsilon=2^{-5}$&$\varepsilon=2^{-10}$&
$\varepsilon=2^{-15}$& $\varepsilon=2^{-20}$&$\varepsilon=2^{-25}$&$\varepsilon=2^{-30}$\\
\hline
\strut&
\multicolumn{7}{l}{\rule{0pt}{8pt}%
Errors $\vvvert u_h-u \vvvert_{\eps\,;\Omega}$ 
}
\\
\strut\rule{0pt}{11pt}%
%
%
 64&3.334e-2	&5.311e-3	&1.095e-3	&7.218e-4	&7.072e-4	&7.067e-4	&7.067e-4	\\
128&1.669e-2	&2.647e-3	&4.580e-4	&1.913e-4	&1.768e-4	&1.763e-4	&1.763e-4	\\
256&8.352e-3	&1.323e-3	&2.161e-4	&5.774e-5	&4.451e-5	&4.404e-5	&4.402e-5	\\
512&4.177e-3	&6.612e-4	&1.061e-4	&2.170e-5	&1.149e-5	&1.101e-5	&1.100e-5	\\
\hline
\strut&
\multicolumn{7}{l}{\rule{0pt}{8pt}%
\color{blue}${\mathcal S}^*=\emptyset$ in \eqref{tau_def}:~ Estimators
(odd rows) ~\&~
  Effectivity Indices (even rows)
  }
\\
\strut\rule{0pt}{11pt}%
%
%
 64&3.546e-2	&8.155e-3	&1.666e-3	&7.554e-4	&7.083e-4	&7.068e-4	&7.067e-4	\\
&1.064	&1.535	&1.521	&1.047	&1.002	&1.000	&1.000	\\
128&1.772e-2	&3.556e-3	&8.999e-4	&2.370e-4	&1.785e-4	&1.764e-4	&1.763e-4	\\
&1.062	&1.343	&1.965	&1.239	&1.010	&1.000	&1.000	\\
256&8.866e-3	&1.770e-3	&5.948e-4	&1.254e-4	&4.867e-5	&4.417e-5	&4.403e-5	\\
&1.061	&1.339	&2.752	&2.171	&1.093	&1.003	&1.000	\\
512&4.434e-3	&8.842e-4	&3.927e-4	&1.063e-4	&2.169e-5	&1.148e-5	&1.101e-5	\\
&1.062	&1.337	&3.703	&4.896	&1.889	&1.043	&1.001	\\
\hline
\strut&
\multicolumn{7}{l}{\rule{0pt}{8pt}%
\color{blue}${\mathcal S}^*\neq\emptyset$ in \eqref{tau_def}:~
Estimators
(odd rows) ~\&~
  Effectivity Indices (even rows)
  }
\\
\strut\rule{0pt}{11pt}%
%
%
 64&3.546e-2	&8.149e-3	&1.536e-3	&7.466e-4	&7.080e-4	&7.068e-4	&7.067e-4	\\
&1.064	&1.534	&1.402	&1.034	&1.001	&1.000	&1.000	\\
128&1.772e-2	&3.553e-3	&7.089e-4	&2.140e-4	&1.776e-4	&1.763e-4	&1.763e-4	\\
&1.062	&1.342	&1.548	&1.118	&1.005	&1.000	&1.000	\\
256&8.866e-3	&1.770e-3	&3.474e-4	&7.509e-5	&4.532e-5	&4.406e-5	&4.402e-5	\\
&1.061	&1.338	&1.607	&1.300	&1.018	&1.001	&1.000	\\
512&4.434e-3	&8.839e-4	&1.732e-4	&3.238e-5	&1.225e-5	&1.104e-5	&1.100e-5	\\
&1.062	&1.337	&1.633	&1.492	&1.066	&1.002	&1.000	\\
\hline
\end{tabular}\hfill}
\end{table}
}

{
\begin{table}[tbp]
\caption{
 \color{blue}Test problem with $C_u=0$,
 mesh with obtuse triangles (see Fig.\,\ref{fig_partial_new}, right).
%
%
}\label{table3_newnew}
\tabcolsep=6pt
\vspace{-0.2cm}
{\small\hfill
\begin{tabular}{r|rrrrrrr}
\hline
\strut\rule{0pt}{9pt}$ N$&
$\varepsilon=1$& $\varepsilon=2^{-5}$&$\varepsilon=2^{-10}$&
$\varepsilon=2^{-15}$& $\varepsilon=2^{-20}$&$\varepsilon=2^{-25}$&$\varepsilon=2^{-30}$\\
\hline
\strut&
\multicolumn{7}{l}{\rule{0pt}{8pt}%
Errors $\vvvert u_h-u \vvvert_{\eps\,;\Omega}$ 
}
\\
\strut\rule{0pt}{11pt}%
%
 64&5.329e-2	&4.862e-3	&8.425e-4	&1.489e-4	&2.633e-5	&4.655e-6	&8.228e-7	\\
128&2.680e-2	&2.438e-3	&4.222e-4	&7.469e-5	&1.320e-5	&2.334e-6	&4.126e-7	\\
256&1.344e-2	&1.220e-3	&2.110e-4	&3.740e-5	&6.611e-6	&1.169e-6	&2.066e-7	\\
512&6.727e-3	&6.104e-4	&1.051e-4	&1.871e-5	&3.308e-6	&5.848e-7	&1.034e-7	\\
\hline
\strut&
\multicolumn{7}{l}{\rule{0pt}{8pt}%
\color{blue}${\mathcal S}^*=\emptyset$ in \eqref{tau_def}:~
Estimators
(odd rows) ~\&~
  Effectivity Indices (even rows)
  }
\\
\strut\rule{0pt}{11pt}%
%
%
 64&5.729e-2	&7.763e-3	&1.487e-3	&2.632e-4	&4.652e-5	&8.224e-6	&1.454e-6	\\
&1.075	&1.597	&1.765	&1.767	&1.767	&1.767	&1.767	\\
128&2.881e-2	&3.383e-3	&8.710e-4	&1.565e-4	&2.766e-5	&4.891e-6	&8.645e-7	\\
&1.075	&1.388	&2.063	&2.095	&2.095	&2.095	&2.095	\\
256&1.444e-2	&1.689e-3	&5.883e-4	&1.167e-4	&2.063e-5	&3.648e-6	&6.448e-7	\\
&1.075	&1.384	&2.788	&3.121	&3.121	&3.121	&3.121	\\
512&7.231e-3	&8.443e-4	&3.903e-4	&1.055e-4	&1.866e-5	&3.299e-6	&5.832e-7	\\
&1.075	&1.383	&3.713	&5.638	&5.642	&5.642	&5.642	\\
\hline
\strut&
\multicolumn{7}{l}{\rule{0pt}{8pt}%
\color{blue}${\mathcal S}^*\neq\emptyset$ in \eqref{tau_def}:~ Estimators
(odd rows) ~\&~
  Effectivity Indices (even rows)
  }
\\
\strut\rule{0pt}{11pt}%
%
%
 64&5.729e-2	&7.668e-3	&1.370e-3	&2.422e-4	&4.282e-5	&7.569e-6	&1.338e-6	\\
&1.075	&1.577	&1.626	&1.626	&1.626	&1.626	&1.626	\\
128&2.881e-2	&3.330e-3	&6.870e-4	&1.215e-4	&2.148e-5	&3.797e-6	&6.712e-7	\\
&1.075	&1.366	&1.627	&1.627	&1.627	&1.627	&1.627	\\
256&1.444e-2	&1.660e-3	&3.437e-4	&6.086e-5	&1.076e-5	&1.902e-6	&3.362e-7	\\
&1.075	&1.360	&1.629	&1.627	&1.627	&1.627	&1.627	\\
512&7.231e-3	&8.301e-4	&1.715e-4	&3.046e-5	&5.385e-6	&9.519e-7	&1.683e-7	\\
&1.075	&1.360	&1.632	&1.628	&1.628	&1.628	&1.628	\\
\hline
\end{tabular}\hfill}
\end{table}
}

In our numerical experiments, we set ${\mathcal T}_0:=\emptyset$ in \eqref{Tcal_star} and replace $\lesssim $ by $\le$
in \eqref{Tcal_star},\,\eqref{tau_def},
and when dealing with the two cases $h_z\lesssim \eps$ and $h_z\not\lesssim \eps$,
{\color{blue}as well as with $h_T\lesssim \eps$ in \eqref{S_star}.}
Also, we understand $a\ll b$ as $a\le \frac15 b$ for any two quantities $a$ and $b$
(so, for example, \eqref{Tcal_star} becomes ${\mathcal T}^*:=\{T\in {\mathcal T} : h_T\le \frac15 H_T \mbox{~and~}h_{T}\le\eps\}$).

We compute the estimator $\mathcal E$ from  \eqref{error_lemma} with $C_f:=1$ and $\btau$ from \eqref{tau_def},\,\eqref{tau_f_def}.
{\color{blue}For the non-obtuse mesh of Fig.\,\ref{fig_partial_new} (left),
conditions ${\mathcal A}1$ and ${\mathcal A}2$ are satisfied,} so we set ${\mathcal S}^*=\emptyset$.
The component $\btauz$ in \eqref{tau_def} is computed by \eqref{tauz_h_small} combined with \eqref{beta_choice_case1} for $h_z\le\eps$, and, otherwise,
using
\eqref{omega_star_case2},\,\eqref{tau_z_case2},\,\eqref{cal_J} combined with \eqref{tau_n_hat_version_ab}.
Note that instead of explicitly including the components involving $\btau_{z;T}^\pm$ (from \eqref{tau_z_case2}) in $\btau$,
we use \eqref{tau_z_case2_bounds_b} (as well as Remark~\ref{rem_sqrt6}). This somewhat simplifies the computations, but yields a slightly less sharp estimator.
{\color{blue}Similarly, whenever $\phi_z^*\neq\phi_z$ in \eqref{tau_z_case2}, we employ the bounds from \eqref{varphi_zT}.
}
When computing the error and the estimator, we replace $\nabla u$ by its linear Lagrange interpolant, and
$u$ and $f_h$  by their quadratic Lagrange interpolants.%

{\color{blue}When using the mesh with obtuse triangles of Fig.\,\ref{fig_partial_new} (right),
we consider ${\mathcal S}^*$ defined by \eqref{S_star}, and also compare the latter with a simpler choice ${\mathcal S}^*=\emptyset$.
Whenever ${\mathcal S}^*\neq\emptyset$, the estimator involves $\btauS^J$ computed by \eqref{btauS},
while the computation of $\btauz$  employs \eqref{beta_star} and  Remark~\ref{rem_kappa_case2}.

For the test problem with $C_u=1$,}
the errors $\vvvert u_h-u \vvvert_{\eps\,;\Omega}$ are compared with the corresponding estimators $\mathcal E$  in Tables~\ref{table1_new}
and \ref{table2_newnew}.
One observes that the effectivity indices
(computed as the ratio of the estimator to the error) do not exceed {\color{blue}1.633,
as long as ${\mathcal S}^*\neq\emptyset$ for the mesh with obtuse triangles.
%
By contrast, ${\mathcal S}^*=\emptyset$ on the mesh with obtuse triangles 
larger and less stable effectivity indices.
%
But the superiority of the estimator with ${\mathcal S}^*\neq \emptyset$ is particularly evident for the test problem with $C_u=0$
on the mesh with obtuse triangles;
compare the effectivity indices for the two choices of ${\mathcal S}^*$ in Table \ref{table3_newnew}. 
Some additional numerical results are given in Appendix~\ref{sec_app_A}.
}

Overall, for the considered ranges of $\eps$ and $N$,
the aspect ratios of the mesh elements take values
between 2 and 3.6e+8.
Considering these variations,
our estimator ${\mathcal E}$
performs quite well and its effectivity indices do not exceed {\color{blue}1.63 and stabilize} 
 as $\eps \rightarrow 0$ and $N$ increases
{\color{blue}(as long as ${\mathcal S}^*\neq \emptyset$ is used for the mesh with obtuse triangles).
We have also observed that the inclusion of
the estimator components $\btauS^J$ in \eqref{tau_def} for $S\in{\mathcal S}^*\neq\emptyset$, in general,
yields a superior estimator.}
A more comprehensive numerical study of the proposed estimator 
certainly needs to be conducted, and
will be presented elsewhere.


{\color{blue}\section{Lower error bounds. Estimator efficiency}\label{sec_kunert}
Throughout this section, we additionally assume that
 $f(x,y; u)-f(x,y;v) \lesssim |u-v|$, and use the additional notation
 $J_S:=[\pt_\bnu u_h]_S$ and $\omega_S:=T\cup T'$ for any $S=\pt T\cap\pt T'\in{\mathcal S}$
 (with the obvious modification $\omega_S:=T$ for the case $S\subset\pt T\cap\pt\Omega$).

 %

{
\begin{table}[t!]
\caption{Lower error estimators for test problem with $u=\sin(\pi a x)$ and $\eps=1$.
}\label{table_Kunert_lb}
\tabcolsep=7pt
\vspace{-0.2cm}
{\small\hfill
\begin{tabular}{ccrrrrrrr}
\hline
&&\multicolumn{3}{l}{\rule{0pt}{8pt}$\,a=1$}&&\multicolumn{3}{l}{\rule{0pt}{8pt}$\,a=3$}\\
\strut\rule{0pt}{9pt}&&
$N=20$& $N=40$&$N=80$&&$N=20$& $N=40$&$N=80$\\
\hline
\strut&&
\multicolumn{7}{l}{\rule{0pt}{8pt}%
{Errors $\vvvert u_h-u \vvvert_{\eps\,;\Omega}$ 
(odd rows) ~\&~
 $\|h_T (f_h-f_h^I)\|_{\Omega}$ (even rows)}%
  }
\\
\strut\rule{0pt}{11pt}%
%
%
  $M=\;\;\;2N$ &&1.01e-1	&5.04e-2	&2.52e-2	&&9.26e-1	&4.56e-1	&2.27e-1\\
 &&3.87e-4	&4.84e-5	&6.05e-6	&&2.87e-2	&3.59e-3	&4.50e-4\\
 \strut\rule{0pt}{10pt}%
  $M=\;\;\;8N$ &&1.01e-1	&5.04e-2	&2.52e-2	&&9.26e-1	&4.56e-1	&2.27e-1\\
&&1.07e-4	&1.34e-5	&1.68e-6	&&7.95e-3	&9.97e-4	&1.25e-4\\
\strut\rule{0pt}{10pt}%
 $M=\; 32N$ &&1.01e-1	&5.04e-2	&2.52e-2	&&9.26e-1	&4.56e-1	&2.27e-1\\
&&2.70e-5	&3.38e-6	&4.22e-7	&&2.00e-3	&2.51e-4	&3.14e-5\\
\strut\rule{0pt}{10pt}%
$M=\!128N$ &&1.01e-1	&5.04e-2	&2.52e-2	&&9.26e-1	&4.56e-1	&2.27e-1\\
&&6.76e-6	&8.45e-7	&1.06e-7    &&5.01e-4	&6.28e-5	&7.86e-6\\	
\hline
\strut&&
\multicolumn{7}{l}{\rule{0pt}{8pt}%
$\underline{\mathcal E}$ using $\varrho_{S\rm\scriptsize(\cite{Kun01})}$
(odd rows) ~\&~
  Effectivity Indices (even rows)
  }
\\
\strut\rule{0pt}{11pt}%
%
  $M=\;\;\;2N$ &&2.89e-1	&1.45e-1	&7.24e-2    &&2.51e+0	&1.26e+0	&6.33e-1\\	
&&2.87	&2.88	&2.88	&&2.72	&2.78	&2.79\\
\strut\rule{0pt}{10pt}%
  $M=\;\;\;8N$ &&1.32e-1	&6.59e-2	&3.30e-2	&&1.17e+0	&5.86e-1	&2.93e-1\\
&&1.31	&1.31	&1.31	&&1.26	&1.29	&1.29\\
\strut\rule{0pt}{10pt}%
 $M=\; 32N$ &&6.27e-2	&3.14e-2	&1.57e-2	&&5.62e-1	&2.82e-1	&1.41e-1\\
&&0.62	&0.62	&0.62	&&0.61	&0.62	&0.62\\
\strut\rule{0pt}{10pt}%
$M=\!128N$ &&3.10e-2	&1.55e-2	&7.75e-3	&&2.79e-1	&1.39e-1	&6.97e-2\\
&&0.31	&0.31	&0.31   &&0.30	&0.31	&0.31\\
\hline
\strut&&
\multicolumn{7}{l}{\rule{0pt}{8pt}%
$\underline{\mathcal E}$ using $\varrho_{S\rm(\S\ref{ssec_new_lower})}$
(odd rows) ~\&~
  Effectivity Indices (even rows)
  }
\\
\strut\rule{0pt}{11pt}%
%
%
  $M=\;\;\;2N$ &&3.00e-1	&1.50e-1	&7.52e-2	&&2.61e+0	&1.32e+0	&6.59e-1\\
&&2.98	&2.98	&2.98	&&2.82	&2.89	&2.90\\
\strut\rule{0pt}{10pt}%
  $M=\;\;\;8N$ &&2.51e-1	&1.26e-1	&6.28e-2	&&2.25e+0	&1.13e+0	&5.64e-1\\
&&2.49	&2.49	&2.49	&&2.43	&2.47	&2.48\\
\strut\rule{0pt}{10pt}%
 $M=\; 32N$ &&2.47e-1	&1.23e-1	&6.18e-2	&&2.21e+0	&1.11e+0	&5.56e-1\\
&&2.45	&2.45	&2.45	&&2.39	&2.44	&2.45\\
\strut\rule{0pt}{10pt}%
$M=\!128N$ &&2.46e-1	&1.23e-1	&6.17e-2	&&2.21e+0	&1.11e+0	&5.55e-1\\	
&&2.44	&2.45	&2.45   &&2.39	&2.43	&2.45\\
\hline
\end{tabular}\hfill}
\end{table}
}

\subsection{Standard lower error bounds are not sharp. Numerical example}\label{ssec_non_sharp}
Consider a simple test problem (\ref{eq1-1}) with  $\eps=1$, the unique exact solution $u=\sin(\pi a x)$ (for $a=1,3$),  and
$f=u-F(x,y)$ on $\Omega=(0,1)^2$.
We employ the triangulation obtained by drawing diagonals
from the tensor product of the uniform grids $\{\frac{i}{N}\}_{i=0}^N$ and $\{\frac{j}{M}\}_{j=0}^M$ respectively in the $x$- and $y$-directions
(with all diagonals having the same orientation).
The standard lumped-mass quadrature, i.e.
${\mathcal T}_*:=\emptyset$ in \eqref{Q_T},
will be used in numerical experiments in this section (while the anisotropic quadrature with ${\mathcal T}_*:={\mathcal T}$ produces very similar results on this mesh).

For this problem, we compare two lower error estimators:
obtained using the standard bubble function approach \cite{Kun01} (see also Lemma~\ref{lem kun_lower} in \S\ref{ssec_kun_}) and  the one obtained in \S\ref{ssec_new_lower}
(combine Theorem~\ref{theo_lower} with Lemma~\ref{lem kun_lower}). They can be described by
\begin{subequations}\label{two_lower_main}
\beq\label{two_lower}
\underline{\mathcal E}:=
\Bigl\{\sum_{S\in{\mathcal S}\backslash\pt\Omega} \varrho_S\, J_S^2+\|h_T f_h^I\|_{\Omega}^2\Bigr\}^{1/2}
\lesssim
\vvvert u_h-u \vvvert_{\eps\,;\Omega}+ \|h_T (f_h-f_h^I)\|_{\Omega},
\eeq
where  the weight $\varrho_S$
for $S\in{\mathcal S}\backslash\pt\Omega$
is defined by
\beq\label{rho_def}
\varrho_S=\left\{\begin{array}{lll}
\varrho_{S\rm\scriptsize(\cite{Kun01})}&\!\!\!\!\!\!{}=
|S|\displaystyle\min_{T\subset\omega_S}\{h_{T}\},
&\mbox{\cite{Kun01} using bubble functions (also \S\ref{ssec_kun_})},
\\[0.4cm]
\varrho_{S\rm(\S\ref{ssec_new_lower})}&\!\!\!\!\!\!{}=
\frac12|\omega_S|,
& \mbox{see  Theorem~\ref{theo_lower} in \S\ref{ssec_new_lower}}.
\end{array}
\right.
\eeq
\end{subequations}
(To be more precise, when $\varrho_{S\rm(\S\ref{ssec_new_lower})}$ is used, the term $\|h_T (f_h-f_h^I)\|_{\Omega}$ in the right-hand side of \eqref{two_lower} should be replaced by a larger
$\|H_T\,{\rm osc }(f_h\,;T)\|_{\Omega}$; see \S\ref{ssec_new_lower} for details.)

To address whether the left-hand side $\underline{\mathcal E}$ in \eqref{two_lower} is sharp,
the errors $\vvvert u_h-u \vvvert_{\eps\,;\Omega}$ (as well as $\|h_T (f_h-f_h^I)\|_{\Omega}$) are compared with $\underline{\mathcal E}$
in Table~\ref{table_Kunert_lb}.
Clearly, the standard lower estimator using the weights $\varrho_{S\rm\scriptsize(\cite{Kun01})}$ is not sharp.
Not only its effectivity indices strongly depend on the ratio $M/N$, but, perhaps more alarmingly, $\underline{\mathcal E}$ converges to zero as $M/N$ increases, i.e. the mesh is anisotropically
refined in the wrong direction (while the error remains almost independent of $M/N$).
By contrast,  the estimator of \S\ref{ssec_new_lower} performs quite well, with the effectivity indices stabilizing. 

When comparing the two estimators, note that $\varrho_{S\rm\scriptsize(\cite{Kun01})}\sim \varrho_{S\rm(\S\ref{ssec_new_lower})}$ when $|S|\sim{\rm diam}\,\omega_S$, however,
$\varrho_{S\rm\scriptsize(\cite{Kun01})}\ll \varrho_{S\rm(\S\ref{ssec_new_lower})}$ when $|S|\ll {\rm diam}\,\omega_S$, i.e. for short edges.
Hence, our numerical experiments suggest that it is the short-edge jump residual terms in the standard lower estimator that are not sharp.
We shall address this theoretically in \S\ref{ssec_new_lower}.

\subsection{Lower error bounds using the standard bubble approach}\label{ssec_kun_}
Here, for completeness, we prove a version of the lower error bounds from \cite[Theorem~4.3]{Kun01} for the semilinear case
(similar, but less sharp bounds can also be found in \cite{Kunert2000,KunVer00}).
\smallskip

\begin{lemma}\label{lem kun_lower}
For  a solution $u$ of \eqref{eq1-1} and any $u_h\in S_h$, one has
\begin{subequations}\label{lower_f_J}
\begin{align}\label{lower_f}
\min\{1,\,h_T\eps^{-1}\} \|f_h^I\|_T&\lesssim \underbrace{\vvvert u_h-u \vvvert_{\eps\,;{T}}+ \min\{1,\,h_T\eps^{-1}\}\|f_h-f_h^I\|_T}_{{}=:{\mathcal Y}_T}
\hspace{-0.5cm}&\forall T\in{\mathcal T},
\\[0.2cm]\label{lower_J}
|S|^{1/2}\,\,\bigl|\eps J_S\bigr|&\lesssim\sum_{T\in\omega_S}{\mathcal Y}_T\,\min\{\eps,\,h_T\}^{-1/2}
&\forall S
\in{\mathcal S}\backslash\pt \Omega.
\end{align}
\end{subequations}
\end{lemma}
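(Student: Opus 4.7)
\medskip
\noindent\textbf{Proof proposal.} The plan is to use the classical bubble function technique of Verf\"urth, in the anisotropic variant of Kunert, separately for each of the two inequalities, exploiting the Lipschitz bound $|f(\cdot;u_h)-f(\cdot;u)|\lesssim|u_h-u|$ imposed at the start of this section.

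For \eqref{lower_f}, I would take the element bubble $b_T:=27\phi_{z_1}\phi_{z_2}\phi_{z_3}$ on $T$, set $v_T:=b_T f_h^I$ and extend by zero so that $v_T\in H_0^1(\Omega)$. The bubble equivalence $\int_T b_T (f_h^I)^2\gtrsim\|f_h^I\|_T^2$ on polynomials reduces the task to bounding $\int_T f_h^I\,v_T$ from above. Splitting this as $\int_T(f_h^I-f_h)v_T+\int_T(f_h-f(\cdot;u))v_T+\int_T f(\cdot;u)v_T$ and rewriting the last term, via the weak formulation of \eqref{eq1-1}, as $\eps^2\int_T\nabla u\cdot\nabla v_T$ (the boundary trace vanishing since $v_T|_{\pt T}=0$), then splitting $\nabla u=\nabla(u-u_h)+\nabla u_h$ and using $\int_T\nabla u_h\cdot\nabla v_T=0$ (because $\triangle u_h|_T=0$), together with $\|v_T\|_T\lesssim\|f_h^I\|_T$, the inverse bound $\|\nabla v_T\|_T\lesssim h_T^{-1}\|f_h^I\|_T$, and the Lipschitz hypothesis, yields
\[
\|f_h^I\|_T\lesssim\|f_h-f_h^I\|_T+\|u_h-u\|_T+\eps h_T^{-1}\vvvert u_h-u\vvvert_{\eps;T}.
\]
Multiplying by $\min\{1,h_T\eps^{-1}\}$ and invoking $C_f+\eps^2\ge 1$ to absorb the middle term into the energy norm produces \eqref{lower_f}.

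For \eqref{lower_J}, I would take the edge bubble $b_S:=4\phi_z\phi_{z'}$ with $z,z'$ the endpoints of $S$, let $\widetilde J_S$ be the constant extension of $J_S|_S$ into $\omega_S$, and set $v_S:=b_S\widetilde J_S$, extended by zero outside $\omega_S$. Element-by-element integration by parts using $\triangle u_h|_T=0$ and $v_S|_{\pt\omega_S}=0$ collapses the volume integrals on $\omega_S$ to a single jump contribution on $S$:
\[
\eps^2 J_S\int_S v_S=\eps^2\int_{\omega_S}\nabla u_h\cdot\nabla v_S.
\]
Subtracting $\eps^2\int_{\omega_S}\nabla u\cdot\nabla v_S=-\int_{\omega_S}f(\cdot;u)v_S$ (weak form of \eqref{eq1-1}) and inserting $\pm f_h,\pm f_h^I$ bounds the left-hand side by a sum over $T\in\omega_S$ of $\eps^2\|\nabla(u_h-u)\|_T\|\nabla v_S\|_T$ and $(\|f_h^I\|_T+\|f_h-f_h^I\|_T+\|u_h-u\|_T)\|v_S\|_T$. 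I would then invoke the anisotropic Kunert bounds $\|v_S\|_T\sim|S|^{1/2}h_{S,T}^{1/2}|J_S|$ and $\|\nabla v_S\|_T\lesssim|S|^{1/2}h_{S,T}^{-1/2}|J_S|$ with $h_{S,T}:=2|T|/|S|$, together with the lower bound $\int_S v_S\gtrsim|S|\,|J_S|$, divide by $\eps|S|^{1/2}|J_S|$, and finally apply \eqref{lower_f} to eliminate $\|f_h^I\|_T$ in favour of ${\mathcal Y}_T$, to obtain \eqref{lower_J}.

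The main obstacle is obtaining the precise weight $\min\{\eps,h_T\}^{-1/2}$ in \eqref{lower_J}: the altitude $h_{S,T}$ in the anisotropic bubble bounds may exceed $h_T$, so the inverse-inequality factors $h_{S,T}^{\pm 1/2}$ must be combined delicately with the $\eps$-scaling of the energy norm and with the weight $\min\{1,h_T\eps^{-1}\}$ supplied by \eqref{lower_f}. The absorption of $\|u_h-u\|_T$ into $\vvvert u_h-u\vvvert_{\eps;T}$ rests on $C_f+\eps^2\ge 1$, and the Lipschitz upper bound on $f$ is the ingredient that permits the residual $f_h-f(\cdot;u)$ to be controlled by the energy error in both parts.
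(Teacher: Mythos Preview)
Your argument for \eqref{lower_f} is correct and is essentially the paper's proof.

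For \eqref{lower_J}, however, there is a genuine gap: the standard (un-squeezed) edge bubble $b_S=4\phi_z\phi_{z'}$ cannot produce the weight $\min\{\eps,h_T\}^{-1/2}$. First, your claimed inverse bound $\|\nabla v_S\|_T\lesssim|S|^{1/2}h_{S,T}^{-1/2}|J_S|$ is false when $S$ is a short edge of an anisotropic~$T$: a direct computation (take $z=(0,0)$, $z'=(0,h_T)$, third vertex $(H_T,0)$) gives $\|\nabla b_S\|_T\sim(H_T/h_T)^{1/2}$, i.e.\ the correct bound is $\|\nabla v_S\|_T\lesssim|T|^{1/2}h_T^{-1}|J_S|$, which is larger than yours by the factor $h_{S,T}/h_T$. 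Second, and more fundamentally, even on long edges (where your bounds do hold with $h_{S,T}\sim h_T$) the bubble carries no $\eps$-dependence, so no ``delicate combination'' can manufacture $\min\{\eps,h_T\}$. Concretely, in the regime $h_T\gg\eps$ the term $\|f_h\|_T\|v_S\|_T\lesssim\min\{1,h_T\eps^{-1}\}^{-1}{\mathcal Y}_T\cdot h_T^{1/2}|S|^{1/2}|J_S|$ contributes, after division, a factor $\eps^{-1}h_T^{1/2}=(h_T/\eps)^{1/2}\eps^{-1/2}$, whereas you need $\eps^{-1/2}$.

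The paper's remedy is exactly the missing idea: for each $T\subset\omega_S$ it replaces $T$ by a sub-triangle $\widetilde T\subseteq T$ sharing the edge $S$ and satisfying $|\widetilde T|\sim\min\{\eps,h_T\}\,|S|$, and builds the bubble $w=J_S\,\widetilde\phi'\widetilde\phi''$ from the hat functions on $\widetilde T$. This squeezing forces $\|w\|_T\sim\min\{\eps,h_T\}^{1/2}|S|^{1/2}|J_S|$ and $\|\nabla w\|_T\lesssim\min\{\eps,h_T\}^{-1}\|w\|_T$, after which the identity $\min\{1,h_T\eps^{-1}\}^{-1}\min\{\eps,h_T\}^{1/2}\eps^{-1}\sim\min\{\eps,h_T\}^{-1/2}$ finishes the proof.
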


\begin{corollary}
If $|\omega_z|\sim|T|$ for any $T\subset\omega_z$, then 
$$
\min\{1,\,\eps h_z^{-1}\}^{1/2}\,\,\|\eps \widetilde J_z\|_{\omega_z}+\min\{1,\,h_z\eps^{-1}\} \|f_h^I\|_{\omega_z}\lesssim
\sum_{T\subset\omega_z}{\mathcal Y}_T\,,
\vspace{-0.4cm}
$$
\beq\label{mathring_J}
\mbox{where}\qquad\quad
\widetilde J_z :=
\underbrace{\max_{S\in\gamma_z: |S|\sim H_z} \bigl|J_S\bigr|}_{{}=:\widehat J_z}
\,\,+\,\,
\{h_z H_z^{-1}\}^{1/2}
\underbrace{\max_{S\in\gamma_z: |S|\ll H_z} \bigl|J_S\bigr|}_{{}=:\mathring{J}_z}.
\hphantom{\mbox{where}\qquad\quad}
\vspace{-0.2cm}
\eeq
\end{corollary}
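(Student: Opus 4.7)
The plan is to translate the hypothesis $|\omega_z|\sim|T|$ into uniform comparability of the local shape parameters and then apply \eqref{lower_f}--\eqref{lower_J} with mostly mechanical bookkeeping. Since $|T|\sim H_T h_T$, $|\omega_z|\sim H_z h_z$, and the triangulation assumptions give $H_T\le H_z$ and $h_T\le h_z$, the hypothesis forces $H_T\sim H_z$ and $h_T\sim h_z$ for every $T\subset\omega_z$. In particular $\min\{1,h_T\eps^{-1}\}\sim\min\{1,h_z\eps^{-1}\}$, and summing \eqref{lower_f} over the $O(1)$ triangles in $\omega_z$ yields at once
\[
\min\{1,h_z\eps^{-1}\}\,\|f_h^I\|_{\omega_z}\lesssim\sum_{T\subset\omega_z}{\mathcal Y}_T.
\]

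For the jump contribution I would split $\widetilde J_z=\widehat J_z+(h_z/H_z)^{1/2}\mathring J_z$ and note that, since $\widetilde J_z$ is a constant on $\omega_z$, one has $\|\eps\widetilde J_z\|_{\omega_z}\sim\eps|\widetilde J_z|(H_z h_z)^{1/2}$. For an edge $S\in\gamma_z$ realising $\widehat J_z$ one has $|S|\sim H_z$, so applying \eqref{lower_J} (with $\omega_S\subset\omega_z$ and $h_T\sim h_z$) gives $\eps|\widehat J_z|\lesssim H_z^{-1/2}\min\{\eps,h_z\}^{-1/2}\sum_{T\subset\omega_z}{\mathcal Y}_T$. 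For an edge realising $\mathring J_z$, provided $|S|\sim h_z$, \eqref{lower_J} yields $\eps|\mathring J_z|\lesssim h_z^{-1/2}\min\{\eps,h_z\}^{-1/2}\sum_{T\subset\omega_z}{\mathcal Y}_T$. Both contributions therefore bound $\|\eps\widetilde J_z\|_{\omega_z}\lesssim h_z^{1/2}\min\{\eps,h_z\}^{-1/2}\sum_{T\subset\omega_z}{\mathcal Y}_T$, since the $(h_z/H_z)^{1/2}$ prefactor in front of $\mathring J_z$ exactly cancels the $H_z^{1/2}$ excess in $|\omega_z|^{1/2}=(H_z h_z)^{1/2}$.

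The algebraic check that closes the argument is that the outer prefactor matches: using $\min\{1,\eps h_z^{-1}\}^{1/2}=\min\{h_z,\eps\}^{1/2}h_z^{-1/2}$, a one-line computation shows $\min\{1,\eps h_z^{-1}\}^{1/2}\cdot h_z^{1/2}\min\{\eps,h_z\}^{-1/2}=1$ in both regimes $h_z\le\eps$ and $h_z>\eps$. The only subtle step, and the one I expect to be the main obstacle, is the assertion that any short edge $S\in\gamma_z$ with $|S|\ll H_z$ actually satisfies $|S|\sim h_z$. This should follow from the maximum angle condition together with $H_T\sim H_z$ and $h_T\sim h_z$: in such a triangle every side sits opposite an angle uniformly bounded away from $0$ and $\pi$, which combined with the identity $h_T=2|T|/H_T$ forces each non-longest side to be comparable to either $h_T$ or $H_T$, ruling out the intermediate lengths that would otherwise obstruct the final estimate for $\mathring J_z$.
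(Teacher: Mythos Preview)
Your proposal is correct and follows precisely the approach the paper intends: the Corollary is stated without proof as a direct consequence of Lemma~\ref{lem kun_lower}, and your argument---summing \eqref{lower_f} over $T\subset\omega_z$ and applying \eqref{lower_J} to the edges realising $\widehat J_z$ and $\mathring J_z$---is exactly that derivation. The ``subtle step'' you flag (that any $S\in\gamma_z$ with $|S|\ll H_z$ must satisfy $|S|\sim h_z$) is indeed the only non-mechanical point, and your justification via the maximum angle condition is correct: under that condition the two longer edges of each $T$ are both $\sim H_T\sim H_z$, so a short edge is necessarily the shortest and hence $\sim h_T\sim h_z$.
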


\begin{remark}[Estimator efficiency under an adaptive-mesh-alignment condition]\label{rem_eff}
It appears that the above result is as sharp as one can get  using the bubble function approach,
while in \S\ref{ssec_non_sharp} we have seen that the short-edge jump residual terms are not sharp in such bounds.
On the other hand, the interpolation error bounds suggest that
a reasonably optimal and  correctly-aligned mesh may be expected to satisfy $\mathring{J}_z\lesssim \widehat J_z$.
Consequently, it appears reasonable to impose a mild version of this condition:
\beq\label{eff_cond}
\eps\mathring{J}_z\lesssim  \eps\widehat J_z+\min\{1,\,\eps h_z^{-1}\}^{-1/2}\min\{1,\,h_z\eps^{-1}\}\|f_h^I\|_{L_\infty(\omega_z)}.
\eeq
when constructing a mesh adaptively. Clearly,
if both \eqref{eff_cond} and
 the condition of the above corollary  are satisfied for all $z\in\mathcal N$, then the upper error estimator from \eqref{upper_bound}
 is efficient.
\end{remark}
\medskip

{\it Proof of Lemma~\ref{lem kun_lower}.}
(i) On any $T\in\mathcal T$, consider $w:=f_h^I\,\phi_1\phi_2\phi_3$, where $\{\phi_i\}_{i=1}^3$  are the standard hat functions associated with the three vertices of $T$.
Now, a standard calculation yields $\|f_h^I\|_T^2\sim \langle f_h^I, w\rangle$. So, using $f_h^I=-\eps^2\triangle u_h +f_h^I$ and \eqref{eq1-1} yields
$\|f_h^I\|_T^2\sim \eps^2\langle \nabla(u_h-u), \nabla w\rangle+\langle f_h^I-f(\cdot;u), w\rangle$.
Next, invoking $\|\nabla w\|_T\lesssim h_T^{-1}\|w\|_T$, one arrives at
$$
\|f_h^I\|_T^2\lesssim \Bigl(
(\eps h_T^{-1}+1)
\vvvert u_h-u \vvvert_{\eps\,;{T}}+\|f_h-f_h^I\|_T \Bigr)\,
\|w\|_T
\,.
$$
Here we also used $|f_h^I-f(\cdot;u)|\lesssim |u_h-u|+|f_h-f_h^I|$.
The desired result \eqref{lower_f} follows in view of $\|w\|_T\lesssim \|f_h^I\|_T$ and $\eps h_T^{-1}+1\sim \min\{1,\,h_T\eps^{-1}\}^{-1}$.

(ii) For each of the two triangles $T\subset\omega_S$, introduce a triangle $\widetilde T\subseteq T$ with an edge $S$ such that $|\widetilde T|\sim \min\{\eps,\,h_T\}|S|$.
Next, set $w:=J_S\, \widetilde \phi'\widetilde \phi''$, where $\widetilde \phi'$ and $\widetilde \phi''$
are the hat functions on the triangulation $\{\widetilde T\}_{T\subset\omega_S}$
associated with the two end points of~$S$ (with $w:=0$ on each $T\backslash\widetilde T$ for $T\subset\omega_S$).
A standard calculation using $\triangle u_h =0$ in $T\subset\omega_S$ and \eqref{eq1-1}, yields
$$
|S|\,(\eps J_S)^2\sim \eps^2 \int_S w\,[\pt_\bnu u_h]_S =\eps^2\langle \nabla u_h, \nabla w \rangle
=\eps^2\langle \nabla (u_h-u), \nabla w \rangle - \langle f(\cdot;u), w \rangle.
$$
Next, invoking $\|\nabla w\|_{T}\lesssim  \min\{\eps,\,h_{T}\}^{-1}\|w\|_{T}$ for any $T\subset\omega_S$, we arrive at
$$
|S|\,(\eps J_S)^2\lesssim\sum_{T\in\omega_S}
\underbrace{\Bigl( \min\{1,\,h_T\eps^{-1}\}^{-1}\vvvert u_h-u \vvvert_{\eps\,;T}+\|f_h\|_{T} \Bigr)}
_{{}\lesssim{\mathcal Y}_{T}\min\{1,\,h_T\eps^{-1}\}^{-1}\;\rm by\;\eqref{lower_f}}
\underbrace{\|w\|_{T}}_{\sim \min\{\eps,\,h_T\}^{1/2}|S|^{1/2}|J_S|}\hspace{-1cm}.\hspace{1cm}
$$
In view of
$\min\{1,\,h_T\eps^{-1}\}^{-1}\,\min\{\eps,\,h_T\}^{1/2}\,\eps^{-1}\sim \min\{\eps,\,h_T\}^{-1/2}$,
one gets \eqref{lower_J}.
\endproof\medskip

\subsection{New lower error bound with sharp short-edge jump residual terms}\label{ssec_new_lower}
Throughout this section,
we make additional restrictions on the anisotropic mesh as follows.
Let  $\Omega:=(0,1)^2$, and $\{x_i\}_{i=0}^n$ be an arbitrary mesh in the $x$ direction on the interval $(0,1)$.
Then, let each $T\in\mathcal T$, for some $i$,
(i) have the shortest edge on the line $x=x_i$;
(ii) have a vertex on the line $x=x_{i+1}$ or $x=x_{i-1}$
(see Fig.\,\ref{fig_partial}).
Also, let ${\mathcal N}={\mathcal N}_{\rm ani}$, i.e.
each $z\in\mathcal N$ be an anisotropic node in the sense of (\ref{ani_node})
and satisfy ${\mathcal A}1_{\rm ani}$.
The above conditions essentially imply that all mesh elements are anisotropic and aligned in the $x$-direction.
The main result of this section is the following.
\medskip

\begin{figure}[!b]
\hfill\color{black}
\begin{tikzpicture}[scale=0.25]
\draw[ultra thick 
] (0,0) -- (21,0) ;
\path[draw,help lines]  (19,-0.2)node[below] {$x_{i-1}$}--(19,9.5);
\path[draw,help lines]  (9.5,-0.2)node[below] {$x_i$}--(9.5,9.5);
\path[draw,help lines]  (2,-0.2)node[below] {$x_{i+1}$}--(2,9.5);
\path[draw]  (19,0)--(19,9.9);
\path[draw]  (9.5,0)--(9.5,9.9);
\path[draw]  (2,0)--(2,9.9);
\path[draw]  (2,1)--(9.5,1.8)--(19,0)--(9.5,0)--cycle;
\path[draw]  (9.5,1.8)--(19,1.9);
\path[draw]  (2,3)--(9.5,4.5)--(19,4)--(9.5,1.8)--cycle;
\path[draw]  (2,3)--(9.5,3)--(19,4);
\path[draw]  (2,5)--(9.5,6.4)--(19,7.5)--(9.5,4.5)--cycle;
\path[draw]  (2,6.7)--(9.5,6.4);
\path[draw]  (19,5.9)--(9.5,4.5);
\path[draw]  (2,8.1)--(9.5,8.2)--(19,7.5)--(9.5,6.4)--cycle;
%
\path[draw]  (2,9.5)--(9.5,8.2)--(19,9.4);
\end{tikzpicture}
\hfill~
\vspace{-0.2cm}
\caption{Partially structured anisotropic mesh considered in \S\ref{ssec_new_lower}.}
\label{fig_partial}
\end{figure}
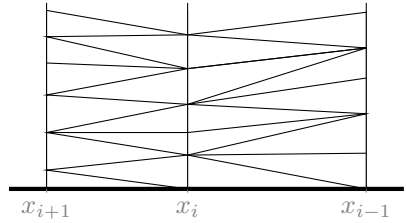

\begin{theorem}[Short-edge jump residual terms]\label{theo_lower}
Let   $u$ and $u_h$ respectively satisfy \eqref{eq1-1} and \eqref{eq1-2}, and
$\Omega_i:=( x_{i- 1}, x_{i+ 1})\times(0,1)$.
If either
no no quadrature is used in $\Omega_i$ (i.e. \eqref{eq1-2} involves $\langle f_h,v_h\rangle_h=\langle f_h,v_h\rangle$ $\forall v_h\in S_h$ with support in $\Omega_i$),
or $\langle \cdot,\cdot\rangle_h$ is defined by \eqref{Q_T}--\eqref{f_bar} with
either $\mathcal T\cap \Omega_i\subset \mathcal T^*$, or $\Omega_i\cap \mathcal T^*=\emptyset$, then
\beq\label{lower_particular}
\sum_{S\in{\mathcal S}\cap\{x=x_i\}}\!\!\!
\min\{\eps|S|,|\omega_S|\}\,
\bigl(\eps{J}_S\bigr)^2\lesssim
\underbrace{\vvvert u_h-u \vvvert^2_{\eps\,;{\Omega_i}}
+\|\lambda_T\,{\rm osc }(f_h\,;T)\|^2_{\Omega_i}}_{{}=\,\sum_{T\subset\Omega_i}{\mathcal Y}_T^2\,\,=:\,\,{\mathcal Y}_i^2}\,.
\eeq
\end{theorem}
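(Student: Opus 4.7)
The plan is to construct a global test function $w\in H^1_0(\Omega)$ supported in $\Omega_i$ of tensor-product form $w(x,y)=\eta(y)\,\psi(x)$ and to apply a weak identity to extract the short-edge jump residuals on $\{x=x_i\}$, leveraging the column structure of the partially structured mesh.

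First, I would take $\psi$ to be a piecewise linear hat in $x$ peaking at $x_i$ with support half-width $\delta:=\min\{H,\eps\}$, where $H:=\min\{x_i-x_{i-1},x_{i+1}-x_i\}$, and $\eta\in H^1_0(0,1)$ to be piecewise linear on the $y$-coordinates of $\mathcal N\cap\{x=x_i\}$, with nodal values chosen so that $\tfrac12(\eta(y_{k-1})+\eta(y_k))$ is comparable to $J_{S_k}$ on each short edge $S_k$ (for instance, $\eta(y_k):=\tfrac12(J_{S_k}+J_{S_{k+1}})$). This ensures that $\int_{S_k}w\sim|S_k|J_{S_k}$, while $w=0$ on $\pt\Omega_i$ makes $w\in H^1_0(\Omega)$.

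Second, I would invoke the identity
\begin{equation*}
\eps^2\sum_{S}\int_S J_S\,w\;+\;\langle f_h,w\rangle\;=\;\eps^2\langle\nabla(u_h-u),\nabla w\rangle\;+\;\langle f_h-f(\cdot,u),w\rangle,
\end{equation*}
whose right-hand side is bounded by $\vvvert u_h-u\vvvert_{\eps;\Omega_i}\bigl(\|\eps\nabla w\|_{\Omega_i}+\|w\|_{\Omega_i}\bigr)$ via Cauchy--Schwarz and the Lipschitz assumption on $f$. The short-edge integrals on $\{x=x_i\}$ produce the main contribution $\sim\eps^2\sum_k|S_k|J_{S_k}^2$. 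The $\langle f_h,w\rangle$ term is handled by subtracting the discrete equation tested against the $S_h$-interpolant $w_h:=\Pi w$, producing an oscillation remainder of the form $\|\lambda_T\,\mathrm{osc}(f_h;T)\|_{\Omega_i}$; it is precisely here that each of the three hypotheses on the quadrature in $\Omega_i$ (no quadrature, $\mathcal T\cap\Omega_i\subset\mathcal T^*$, or $\Omega_i\cap\mathcal T^*=\emptyset$) is used to ensure $\langle f_h,w\rangle-\langle f_h,w_h\rangle_h$ reduces uniformly to this oscillation term. Long-edge contributions $\eps^2\sum_{S'\text{ long}}J_{S'}\int_{S'}w$ are small because $\psi\equiv 0$ outside $(x_i-\delta,x_i+\delta)$, and they are absorbed into $\mathcal Y_i^2$ via Lemma~\ref{lem kun_lower}(ii).

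Third, I would compute
\begin{equation*}
\|w\|_{\Omega_i}^2\sim\delta\|\eta\|^2,\qquad \|\eps\nabla_x w\|_{\Omega_i}^2\sim\eps^2\|\eta\|^2/\delta,\qquad \|\eps\nabla_y w\|_{\Omega_i}^2\sim\eps^2\delta\|\eta'\|^2,
\end{equation*}
and, using $\|\eta\|^2\sim\sum_k|S_k|J_{S_k}^2$, square the resulting inequality. Optimizing over $\delta=\min\{H,\eps\}$ yields the weight $\min\{\eps|S_k|,|\omega_{S_k}|\}$ on each edge, giving the stated bound $\sum_{S\in\mathcal S\cap\{x=x_i\}}\min\{\eps|S|,|\omega_S|\}(\eps J_S)^2\lesssim\mathcal Y_i^2$.

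The main obstacle is the $y$-derivative contribution $\eps^2\delta\|\eta'\|^2$, which is naively too large in the boundary-layer regime $|S_k|\ll\eps$. Taming it requires both the column alignment of the mesh (so that long-edge jumps stay controllable by Lemma~\ref{lem kun_lower}(ii)) and the Galerkin orthogonality (so that the $\langle f_h,w\rangle$ difference reduces to an oscillation rather than a mass-dominated contribution). Balancing the three contributions of $\|\eps\nabla w\|^2+\|w\|^2$ against the main term $\eps^2\sum_k|S_k|J_{S_k}^2$ is the core technical step, and handling the three quadrature cases uniformly is then a matter of checking that in each the interpolation-quadrature mismatch is an oscillation term of the stated form.
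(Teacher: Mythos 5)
Your high-level strategy matches the paper's---build a test function supported near the column $\{x=x_i\}$ whose nodal values encode the short-edge jumps, use the weak form plus Galerkin orthogonality to isolate those jumps, and balance against energy-error and oscillation terms. But the detailed construction has a gap that annihilates the very quantity you need to extract. You set $w=\eta(y)\psi(x)$ and subtract the discrete equation tested against $w_h:=\Pi w$, the $S_h$-interpolant of $w$. After using $\eps^2\langle\nabla u_h,\nabla w_h\rangle=\eps^2\sum_S\int_S J_S\,w_h$, the jump contribution that survives on the left-hand side is $\eps^2\sum_S\int_S J_S\,(w-w_h)$. However, on each short edge $S\subset\{x=x_i\}$ one has $\psi(x_i)=1$, so $w=\eta(y)$ there; and since $\eta$ is affine between consecutive node heights on $\{x=x_i\}$, $w$ is already an affine function of $y$ on $S$ with the same nodal values as $\Pi w$. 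Hence $w=w_h$ on every short edge, $\int_S J_S\,(w-w_h)=0$, and the main term vanishes. The argument cannot close as written.

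The paper escapes this by \emph{scaling} the Galerkin-orthogonality correction rather than using it at unit strength. It builds $v$ on the $\theta$-shrunk tube $\tilde\Omega_i$ of width $\theta H$, with $\theta:=\min\{\eps H^{-1},\tfrac12\}$ (note $\theta H\sim\min\{\eps,H\}$ agrees with your $\delta$), takes $v_h:=\Pi v$ on the \emph{original} mesh, and sets $w:=v-\theta v_h$. On $\{x=x_i\}$ one has $v=v_h$, so $w=(1-\theta)v$ with $1-\theta\ge\tfrac12$: the short-edge term survives. The same scaling buys two further identities you would otherwise have to estimate: $\int_S w=0$ on every edge off $\{x=x_i\}$, so $\langle\pt_x u_h,\pt_x w\rangle$ contains \emph{no} long-edge jump residuals at all; and $\langle\pt_y u_h,\pt_y w\rangle=0$ identically (the first relation in \eqref{theta_rels}). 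The companion relation $\int_{\tilde\Omega_i}\widehat f_h\widehat v\,\widetilde\varphi_i=\theta\int_{\Omega_i}\widehat f_h\widehat v\,\varphi_i$, together with the careful choice $\widehat f_h(x,y):=f_h^I(x_i,y)$ (or $\widebar f_i(y)$ in the $\mathcal T^*$ case), reduces the mass-plus-quadrature mismatch $\psi_2$ to an oscillation remainder without any crude interpolation-error estimate.

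You correctly anticipate the $y$-derivative estimate $\eps^2\delta\|\eta'\|^2$ as a central obstacle, and indeed the analogue appears in the paper as \eqref{pt_y_v}; it is tamed by Lemma~\ref{lem_osc_v}, which bounds the inter-node oscillation $|J_{S^+}-J_{S^-}|$ by long-edge jumps (these in turn feed into $\mathcal Y_i$ via Lemma~\ref{lem kun_lower}(ii)). That part of your reasoning is on the right track. But you would still need the non-unit scaling in the Galerkin subtraction to keep the short-edge term alive, and that is precisely the ingredient your proposal omits.
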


To prove this theorem, we shall use an auxiliary result.
\medskip

\begin{lemma}\label{lem_osc_v}
(i) If $\gamma_z\cap\{x=x_i\}$ is formed by exactly two edges $S^-$ and $S^+$,
then
\beq\label{osc_v_lem}
\bigl|J_{{S}^+} - J_{{S}^-}\bigr| \lesssim h_z H_z^{-1}\!\!\sum_{S\in\gamma_z\backslash\{x=x_i\}}\!\!\! |J_S|.
\vspace{-0.1cm}
\eeq
(ii) If $\gamma_z\cap\{x=x_i\}$ is formed by a single edge $S^+\!$, then
$ J_{{S}^-}\!$ in  \eqref{osc_v_lem} is replaced by~$0$.%
\end{lemma}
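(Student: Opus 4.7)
\noindent\textit{Proof plan.}
The strategy is to exploit the continuity of the piecewise linear $u_h$: across any interior edge $S$ the gradient jump is purely normal, $\llbracket\nabla u_h\rrbracket_S=\tilde J_S\,\bnu_S$ with $|\tilde J_S|=|J_S|$ and $\bnu_S$ a unit normal. Numbering the triangles of $\omega_z$ counterclockwise as $T_1,\ldots,T_{N_z}$ with the common edges $S_i:=\pt T_{i-1}\cap\pt T_i$, the algebraic starting point will be the telescoping identity
$$\sum_{i=1}^{N_z}\llbracket\nabla u_h\rrbracket_{S_i}=\nabla u_h|_{T_{N_z}}-\nabla u_h|_{T_0}.$$
For case~(i) (interior $z$) the right-hand side vanishes because $T_{N_z}=T_0$, giving a vectorial identity $\sum_{S\in\gamma_z}\tilde J_S\,\bnu_S=0$.

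The key step is to project this identity onto ${\mathbf i}_x$. The two vertical short edges $S^\pm\subset\{x=x_i\}$ have $\bnu_{S^\pm}\cdot{\mathbf i}_x=\pm 1$ with opposite signs, so together they contribute $\pm(J_{S^+}-J_{S^-})$. Each remaining long edge $(z,w)\in\gamma_z$ with $w=(x_{i\pm1},y_w)$ contributes a term bounded by $|y_w-y_0|/H_z$ times $|J_{(z,w)}|$. To bound $|y_w-y_0|\lesssim h_z$ I would first handle the ``extremal'' long edges, i.e.\ those $(z,w)$ sharing a triangle $T$ whose short edge $(z,z^*)$ lies on $\{x=x_i\}$: applying ${\mathcal A}1_{\rm ani}$ at both $z$ and $z^*$ forces the angles at the endpoints of that short edge to be within $\lambda_1 h_z/H_z$ of $\pi/2$, whence $|y_w-y_0|\lesssim h_z$. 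Any other ``middle'' long edge $(z,w')\in\gamma_z$ has $w'$ on $\{x=x_{i\pm1}\}$ at an angular position between two such extremal $w$'s; since the angular order of points on a vertical line viewed from $z$ matches the ordering of their $y$-coordinates, $y_{w'}$ lies between the corresponding extremal $y$-values, so the same bound holds. Collecting all contributions yields \eqref{osc_v_lem}.

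For case~(ii) one first observes that, on $\Omega=(0,1)^2$, having a single edge in $\gamma_z\cap\{x=x_i\}$ forces $z$ onto a horizontal side of $\pt\Omega$ (on a vertical side both short edges along $\{x=x_i\}$ would be boundary edges and hence absent from $\gamma_z$, and corners are excluded). The two extreme triangles $T_1$ and $T_{N_z}$ of the fan then each contain a boundary edge along $\{y=0\}$ or $\{y=1\}$; since $u_h$ vanishes at its two endpoints, which share the same $y$-coordinate, the linear function $u_h$ is independent of $x$ in these triangles, giving $\pt_x u_h|_{T_1}=\pt_x u_h|_{T_{N_z}}=0$. The open-fan telescope then still reduces to $\sum_{S\in\gamma_z}\tilde J_S(\bnu_S\cdot{\mathbf i}_x)=0$, and the projection argument repeats verbatim with $S^-$ absent, producing the claimed bound with $J_{S^-}$ replaced by~$0$.

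The main obstacle I anticipate is the geometric estimate $|y_w-y_0|\lesssim h_z$ uniformly for \emph{every} long edge in $\gamma_z$: the extremal ones are controlled directly by ${\mathcal A}1_{\rm ani}$ applied at a short edge of~$z$, but the middle long edges (not bordered by any such triangle) require the angular-order observation to inherit the same bound. Once this step is in place, the rest of the argument is a routine projection of the sum-of-jumps identity onto ${\mathbf i}_x$.
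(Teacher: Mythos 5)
Your proof is correct and follows the paper's argument: the same telescoping gradient-jump identity $\sum_{S}\llbracket\nabla u_h\rrbracket_S=0$ (extended by zero outside $\Omega$, resp.\ with an open-fan modification, for boundary nodes), projected onto ${\mathbf i}_x$, with ${\mathcal A}1_{\rm ani}$ supplying the crucial bound $|\bnu_S\cdot{\mathbf i}_x|\lesssim h_z H_z^{-1}$ on the long edges. Your extremal/middle-edge detour for the geometric bound and the boundary-triangle observation $\partial_x u_h|_{T_1}=\partial_x u_h|_{T_{N_z}}=0$ in case (ii) are somewhat more elaborate than the paper's direct invocation of ${\mathcal A}1_{\rm ani}$ (which, read as bounding all angles of all $T\subset\omega_z$, already gives the edge-normal bound for every long edge) and its use of $\bnu_S\cdot{\mathbf i}_x=0$ on the horizontal boundary edges, but both routes reach the same conclusion.
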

\smallskip

\begin{proof}
(i) Note that in this case $z\not\in\pt\Omega$.
Using the notation $\{T_i\}$ of \S\ref{sec_A1} (see Fig.\,\ref{fig_tau_z_case1}, centre), let
$\llbracket \nabla u_h\rrbracket_{\pt T_{i-1}\cap\pt T_i}:=\nabla u_h\bigr|_{\pt T_i}-\nabla u_h\bigr|_{\pt T_{i-1}}$.
Then
$\sum_{S\in\gamma_z} \llbracket \nabla u_h\rrbracket_{S}=0$.
Multiplying this relation by the unit vector ${\mathbf i}_{x}$ in the $x$-direction, and noting that $\llbracket \nabla u_h\rrbracket_{{S}^\pm}\cdot{\mathbf i}_{x}=\pm J_{{S}^\pm}$,
one gets the desired assertion.
We also use the observation that
for $S\in\gamma_z\backslash\{x=x_i\}$, one has
$|\llbracket \nabla u_h\rrbracket_S\cdot{\mathbf i}_{x}|\sim |J_S\, \bnu_S\cdot{\mathbf i}_{x}|$, where $\bnu_S$ is a unit vector normal to $S$, where,
in view of ${\mathcal A}1_{\rm ani}$, one has
$|\bnu_S\cdot{\mathbf i}_{x}|\lesssim h_z H_z^{-1}$.

(ii) Now $z\in\pt\Omega$, so extend $u_h$ to $\R^2\backslash\Omega$ by $0$ and
imitate the above proof with the modification that now
$\sum_{S\in{\mathcal S}_z} \llbracket \nabla u_h\rrbracket_{S}=0$.
When dealing with the two edges on $\pt\Omega$,
note that for $S\in{\mathcal S}_z\cap\pt \Omega$,
one gets $\bnu_S\cdot{\mathbf i}_{x}=0$.
\end{proof}
\medskip

{\it Proof of Theorem~\ref{theo_lower}.}
%
Set $H:=x_{i+1}-x_{i-1}$,
 and $\theta:=\min\bigl\{\eps H^{-1},\,\frac12\bigr\}$, and then
$\widetilde x_{i\pm 1}:=x_i\pm \theta|x_{i\pm1}-x_i|$ and $\widetilde \Omega_i:=(\widetilde x_{i- 1},\widetilde x_{i+ 1})\times(0,1)$
(so $ \Omega_i$ is a rectangular domain, at least, twice as narrow as $\Omega_i$).
Furthermore,
 define a triangulation $\widetilde{\mathcal T}_i$ on $\widetilde \Omega_i$ by dividing each trapezoid in the partition ${\mathcal T}\cap\widetilde \Omega_i$ into two triangles.

Now, define $v\in C(\bar \Omega)$ with support in $\widetilde \Omega_i$ (so $v=0$ on $\pt\widetilde \Omega_i$) using the standard piecewise-linear interpolation on $\widetilde{\mathcal T}_i$.
Its node values in the interior of $\widetilde \Omega_i$ are defined by
$v(z):={J}_S$  for any $z\in\mathcal N$ on $\{x=x_i\}\backslash\pt\Omega$,
 where $S\in\gamma_z\cap\{x=x_i\}$ is any vertical short edge originating at $z$.
(For definiteness, let $S$ connect $z$ with the node above it.)

Also, let $v_h\in S_h$ be the piecewise-linear interpolant of $v$ on the original triangulation ${\mathcal T}$ (then $v\in C(\bar \Omega)$ has support in $\Omega_i$),
and $w:=v-\theta v_h$.
 Now, a standard calculation yields
\begin{align}\notag
\underbrace{\eps^2\langle\nabla (u_h-u),{}\nabla v\rangle+\langle \widehat f_h-f(\cdot;u),v\rangle}_{{}=:\psi_1}&{}=
\eps^2 \langle\nabla u_h,\nabla v\rangle+\langle \widehat f_h,v\rangle\,,\\
&\hspace{-1.5cm}
{}=
\eps^2 \langle \pt_x u_h ,\pt_x w\rangle+
\underbrace{\eps^2\langle \pt_y u_h ,\pt_y w\rangle
+\langle \widehat f_h,v\rangle-\theta\langle \widehat f_h,v_h\rangle_h
}_{{}=:\psi_2}.
\label{standard_calc}
\end{align}
Here we used a function $\widehat f_h\approx f_h$, which will be specified later subject to the condition
$\|\lambda_T(\widehat f_h- f_h)\|_{\Omega_i}\lesssim \|\lambda_T\,{\rm osc }(f_h\,;T)\|_{\Omega_i}\le{\mathcal Y}_i$.

With $\bnu_x:=(\bnu\cdot{\mathbf i}_x){\mathbf i}_x$ (which is the standard vector projection of the outward normal vector $\bnu$ onto ${\mathbf i}_x$),
one gets
\begin{align*}
\langle \pt_x u_h ,\pt_x w\rangle&{}=\sum_{S\subset{\mathcal S}\cap \Omega_i}\int_S[\nabla u_h\cdot \bnu_x] w
=\sum_{S\subset{\mathcal S}\cap\{x=x_i\}}\int_S[\nabla u_h\cdot \bnu_x] w,
\end{align*}
where for $S\subset{\mathcal S}\cap\Omega_i\backslash\{x=x_i\}$, we used
$\int_S w=\int_S v-\theta\int_S v_h=0$ (as each of $v$ and $v_h$ is linear on its support on $S$, and $v=v_h$ on $\{x=x_i\}$).
Next, note that for $S\subset{\mathcal S}\cap\{x=x_i\}$, one has
$|S|\sim H^{-1}|\omega_S|$, while
 $[\nabla u_h\cdot \bnu_x]={J}_S$
and $w=(1-\theta)v$ with $v\ge {J}_S-{\rm osc}(v\,;S)$ (as $v=J_S$ at one of the end points of $S$), so
$$
\langle \pt_x u_h ,\pt_x w\rangle\ge(1-\theta)H^{-1} \!\!\!
\sum_{S\subset{\mathcal S}\cap\{x=x_i\}} \!\!\!|\omega_S|\,{J}_S \bigl\{{J}_S-{\rm osc}(v\,;S)\bigr\}.
$$
Combining the latter with \eqref{standard_calc} multiplied by $\theta H$, and noting that
$1-\theta\ge\frac12$,  one now gets
\beq\label{main100}
\sum_{S\subset{\mathcal S}\cap\{x=x_i\}}\!\!\! \theta|\omega_S|\,(\eps {J}_S)^2 \lesssim\!\!\!
\sum_{S\subset{\mathcal S}\cap\{x=x_i\}}\!\!\! \theta|\omega_S|\,\bigl(\eps\,{\rm osc}(v\,;S)\bigr)^2
+(\theta H) |\psi_1-\psi_2|\,.
\eeq

We claim
that, to complete the proof, it suffices to get a somewhat similar bound: 

\beq\label{suffices10}
\sum_{S\subset{\mathcal S}\cap\{x=x_i\}}\!\!\! \theta|\omega_S|\,(\eps {J}_S)^2 \lesssim
{\mathcal Y}_i^2
+\!\!\!
\sum_{S\subset{\mathcal S}\cap\{x=x_i\}}\!\!\! \theta|\omega_S|\,\bigl(\eps H|S|^{-1}\,{\rm osc}(v\,;S)\bigr)^2.
\eeq
Indeed, this implies \eqref{lower_particular}, as here in the left-hand side, $\theta|\omega_S|\sim\min\{\eps |S|,|\omega_S|\}$. Furthermore, using Lemma~\ref{lem_osc_v}
to estimate ${\rm osc}(v\,;S)$, the sum in the right-hand side of~\eqref{suffices10} is bounded by
$
\sum_{S\subset{\mathcal S}\cap\Omega_i\backslash\{x=x_i\}} \theta|\omega_S|\,(\eps J_S)^2\lesssim \sum_{T\subset\Omega_i}{\mathcal Y}_T^2\sim{\mathcal Y}_i^2$.
The latter assertion follows from \eqref{lower_J} in view of $\theta|\omega_S|\sim \min\{\eps h_T,|\omega_S|\} \lesssim\min\{\eps |S|,h_T|S|\}$ for any $T\subset\omega_S$.
So it remains to derive \eqref{suffices10} from \eqref{main100}.

For $\psi_1$, defined in \eqref{standard_calc}, in view of $|f_h-f(\cdot; u)|\lesssim |u_h-u|$ and $\|\lambda_T(\widehat f_h- f_h)\|_{\Omega_i}\lesssim{\mathcal Y}_i$, one has
\begin{subequations}\label{psi1_bounds}
\beq
|\psi_1|
\lesssim
{\mathcal Y}_i\,
\Bigl\{ \eps\|\nabla v \|_{\widetilde\Omega_i}+\|\lambda_T^{-1} v \|_{\widetilde\Omega_i} \Bigr\}.
\eeq
Here, recalling the definition of $v$, note that
$\pt_y v=0$ in any triangle in $\widetilde{\mathcal T}_i$ with a single vertex on $\{x=x_i\}$,
while $\pt_y v=\pm |S|^{-1} {\rm osc}(v\,;S)$ and
$|\widetilde T|\sim\theta|\omega_S|$
for any triangle $\widetilde T\in\widetilde{\mathcal T}_i$ sharing an edge $S$ with $\{x=x_i\}$,
so
\beq\label{pt_y_v}
\| \eps\, \pt_y v \|^2_{\widetilde\Omega_i}\lesssim \sum_{S\subset{\mathcal S}\cap\{x=x_i\}}\!\!\!
\theta|\omega_S|\,\bigl(\eps |S|^{-1} {\rm osc}(v\,;S)\bigr)^2.
%
\eeq
Furthermore,  any triangle $\widetilde T\in\widetilde{\mathcal T}_i$ touches an edge $S\subset\{x=x_i\}$ such that
$|\eps\,\pt_x v|\lesssim \eps(\theta H)^{-1}\max_{\widetilde T}|v|=\eps(\theta H)^{-1}J_S$, while $\lambda_T^{-1}\sim \eps (\theta H)^{-1}$ implies a similar bound
for $|\lambda_T^{-1}v|$. Combining these observations with $|\widetilde T|\sim\theta|\omega_S|$ yields
\beq\label{pt_v_x}
\| \eps\, \pt_x v \|^2_{\widetilde\Omega_i}
+\|\lambda_T^{-1} v\|^2_{\widetilde\Omega_i}\lesssim \!\!\!\sum_{S\subset{\mathcal S}\cap\{x=x_i\}}\!\!\!
\theta|\omega_S|\,\bigl(\eps(\theta H)^{-1}J_S\bigr)^2
=(\theta H)^{-2}\!\!\!\!\sum_{S\subset{\mathcal S}\cap\{x=x_i\}}\!\!\!
 \theta|\omega_S|\,(\eps J_S)^2.
\eeq
\end{subequations}

To estimate $\psi_2$ (defined in \eqref{standard_calc}),
set $\widehat f_h(x,y):=f^I_h(x_i,y)$ and $\widehat v(x,y):=v(x_i,y)$ in $\Omega_i$.
Note that
\beq\label{theta_rels}
\langle \pt_y u_h ,\pt_y v\rangle=\theta\,\langle \pt_y u_h ,\pt_y v_h\rangle,
\qquad
\int_{\widetilde\Omega_i} \!\widehat f_h \widehat v\, \widetilde\varphi_i(x)=\theta\,
\int_{\Omega_i} \!\widehat f_h \widehat v\, \varphi_i(x),
\eeq
where $\widetilde\varphi_i(x)$ and $\varphi_i(x)$ are the standard one-dimensional hat functions on the intervals
$(\widetilde x_{i-1},\widetilde x_{i+1})$ and $(x_{i-1},x_{i+1})$, respectively, with $\widetilde\varphi_i(x_i)=\varphi_i(x_i)=1$.
For the first relation in \eqref{theta_rels}, we relied on the observations made on $\pt_y v$ when obtaining \eqref{pt_y_v},
as well as similar properties of $v_h$.

First, consider the case
of no quadrature used in $\Omega_i$, i.e.  $\langle \widehat f_h , v\rangle_h=\langle \widehat f_h , v\rangle$. Then
\beq\label{psi2_rel}
\psi_2 = \int_{\widetilde\Omega_i} \!\widehat f_h \bigl(v-\widehat v\, \widetilde\varphi_i(x)\bigr)
-\theta \int_{\Omega_i} \!\widehat f_h \bigl(v_h-\widehat v\, \varphi_i(x)\bigr)
.
\eeq
From this one can show (we shall comment on this below) that
\beq\label{psi2_bound}
|\psi_2|\lesssim
\underbrace{\bigl\|\min\{1,\,h_T\eps^{-1}\}\,f_h^I\bigr\|_{\Omega_i}}_{{}\lesssim{\mathcal Y}_i{\rm\;by\;\eqref{lower_f}}}
\,\Bigl\{\sum_{S\in{\mathcal S}\cap \{x=x_i\}}\!\!\! \theta|\omega_S|\, \bigl(
\underbrace{\min\{1,\,|S|\eps^{-1}\}^{-1}}_{{}\lesssim \eps|S|^{-1}\theta^{-1}}
\,{\rm osc}(v\,;S)\bigr)^2\Bigr\}^{1/2}.
\eeq
Now, combining \eqref{psi1_bounds} and \eqref{psi2_bound} with \eqref{main100}
one arrives at the desired assertion \eqref{suffices10}.

To complete the proof, we still need to show that
that \eqref{psi2_bound} follows from \eqref{psi2_rel}, as well as $\|\lambda_T(\widehat f_h- f_h)\|_{\Omega_i}\lesssim \|\lambda_T\,{\rm osc }(f_h\,;T)\|_{\Omega_i}$.
For each $T\subset\Omega_i$, introduce the minimal rectangle $R_T=(x_{i-1},x_{i+1})\times(y_T^-,y_T^+)$  containing $T$
(i.e. $(y_T^-,y_T^+)$ is the range of $y$ values within $T$).
Note that, crucially, by condition~${\mathcal A}1_{\rm ani}$
there is $K\lesssim 1$ such that $|R_T|\sim|T|$ and $R_T\subset \omega_T^{(K)}\cap\Omega_i$,
with the notation
$\omega_T^{(j+1)}$ 
for the patch of elements in/touching $\omega_T^{(j)}$ and $\omega_T^{(0)}:=T$.
Now,
$|v- \widehat v\,\widetilde \varphi_i |\le {\rm osc }(v\,;R_T\cap\{x=x_i\})$ for any $T\subset\Omega_i$,
so
\eqref{psi2_rel} implies a version of \eqref{psi2_bound} with $f_h^I$ replaced by $\widehat f_h$, and ${\rm osc }(v\,;S)$ replaced by ${\rm osc }(v\,;R_T\cap\{x=x_i\})$.
As $h_{T'}\sim h_T$ and $H_{T'}\sim H_T$ for any triangle $T'\cap R_T\neq\emptyset$, \eqref{psi2_bound} follows.
Similarly,
$|\widehat f_h- f_h|\le {\rm osc }(f_h^I\,;R_T)$ for any $T\subset\Omega_i$ implies
$\|\lambda_T(\widehat f_h- f_h)\|_{\Omega_i}\lesssim \|\lambda_T\,{\rm osc }(f_h\,;T)\|_{\Omega_i}$.

Finally note that \eqref{psi2_rel} is valid only if no quadrature is used in $\Omega_i$.
Otherwise, the estimation of $\psi_2$ needs to be slightly adjusted.
For the case
${\mathcal T}\cap\Omega_i\subset{\mathcal T}^*$,
tweak the definition of $\widehat f_h$ to $\widehat f_h(x,y):=\widebar f_i(y)$, where $\widebar f_i(y)$ is a one-dimensional piecewise-constant interpolant of $f_h$ on $\{x=x_i\}\cap\Omega$
such that it is constant on each edge $S\subset\{x=x_i\}$. With this modification,
$\int_{\Omega_i} \!\widehat f_h \widehat v\, \widetilde\varphi_i(x)=\langle \widehat f_h , v\rangle_h$,
so the second term in \eqref{psi2_rel} vanishes, while all other arguments apply.
For the case $\Omega_i\cap \mathcal T^*=\emptyset$,
one has $\langle \widehat f_h , v\rangle_h=\int_{\Omega} (\widehat f_h , v)^I$, so the bound   \eqref{psi2_bound} on $\psi_2$
 will additionally include $\|\lambda_T\,{\rm osc }(f_h^I\,;T)\|_{\Omega_i}\|\lambda_T^{-1}\,v\|_{\widetilde \Omega_i}$,
so \eqref{pt_v_x} is employed again  for this additional term.
\endproof
\medskip

\begin{remark}\label{rem_tb_justified}
Combing the lower error bounds \eqref{lower_f_J} and \eqref{lower_particular} and comparing the resulting lower bound with the upper error bound \eqref{upper_bound},
one concludes that for the estimator to be efficient, the term
$\min\{1,\,\eps h_z^{-1}\} \|\eps J_z\|^2_{\omega_z}$  should be replaced by
$\sum_{S\in\gamma_z}\min\{\eps|S|,|\omega_z|\}\,(\eps{J}_S)^2$
in \eqref{upper_bound}, and, equivalently, in \eqref{tau_z_main}.
When $h_z\gtrsim \eps$, this improvement follows from the first relation in \eqref{theo_bound}.
Otherwise,
if $H_z\lesssim  \eps$, this follows from $ |\omega_z|\lesssim \eps|S|$.
For the remaining case $h_z\lesssim\eps\lesssim H_z$, assuming $z\in{\mathcal N}_{\rm ani}$ under condition ${\mathcal A}1_{\rm ani}$, this
sharper upper bound
can be shown for a slightly more intricate version of $\btauz$, defined as follows.
Using the notation of \eqref{tau_z} and \eqref{tau_z_case2_bounds}
(see Fig.\,\ref{fig_tau_z_case1} and Fig.\,\ref{fig_local_kappa} (left); also assume that $\btau_{z;T}^\pm\in L_2(\Omega)$ has support on $T$), set
$$
\btauz:=\sum_{i=1,m_z} {\textstyle\frac12} J_{S_i^-}\bigl(\btau_{z;T_{i-1}}^+ +\btau_{z;T_{i}}^-\bigr)
+\phi_z\sum_{i=1}^{N_z}\bigl(\alpha_i \bnu_i+\beta_i d_i^{-1}\bmu_i\bigr)\,\mathbbm{1}_{(x,y)\in T_i}\,,
$$
where $\{\alpha_i\}$ and $\{\beta_i\}$
are chosen to minimize  \eqref{optima}
subject to the constraints \eqref{tau_n_hat}, in which $ \bigl[\partial_\bnu u_h\bigr]_{\pt T_{i-1}\cap \pt T_i}$ for $i=1,m_z$ are replaced by $0$;
see Appendix~\ref{sec_app_B}.

It appears, however, that in most practical situations, this modification of $\btauz$ will not improve the estimator, as the short-edge jump residual terms
in the upper error estimator are expected to be dominated by the other terms (as discussed in Remark~\ref{rem_eff}).
\end{remark}}

\newpage



\appendix
\section{Additional numerical results}\label{sec_app_A}
We again consider the test problem and the meshes from \S\ref{ssec_numer}, but now look at two components of the the error in the energy norm. It is reasonable to assume that
\beq\label{er_decomp}
\vvvert u_h-u \vvvert_{\eps\,;\Omega}\sim
\bigl\{\eps^2\|\nabla u_h-(\nabla u)^I\|^2_{\Omega}+\| u_h-u^I\|^2_{\Omega}\bigr\}^{1/2}
+\| u-u^I\|_{\Omega}\,.
\eeq


{
\begin{table}[t!]
\caption{
{\color{blue}Test problem with $C_u=1$,
 non-obtuse triangulation (see Fig.\,\ref{fig_partial_new}, left):%
 }
  error component $\bigl\{\eps^2\|\nabla u_h-(\nabla u)^I\|^2_{2\,;\Omega}+\| u_h-u^I\|^2_{2\,;\Omega}\bigr\}^{1/2}$ from \eqref{er_decomp} compared with
  ${\mathcal E}\bigr|_{f_h:=f_h^I}$\,. 
}\label{table2_new}
\tabcolsep=6pt
\vspace{-0.2cm}
{\small\hfill
\begin{tabular}{r|rrrrrrr}
\hline
\strut\rule{0pt}{9pt}$ N$&
$\varepsilon=1$& $\varepsilon=2^{-5}$&$\varepsilon=2^{-10}$&
$\varepsilon=2^{-15}$& $\varepsilon=2^{-20}$&$\varepsilon=2^{-25}$&$\varepsilon=2^{-30}$\\
\hline
\strut&
\multicolumn{7}{l}{\rule{0pt}{8pt}%
{Errors 
(odd rows) ~\&~
  Computational Rates (even rows)}%
  }
\\
\strut\rule{0pt}{11pt}%
%
 64&3.203e-2	&5.172e-3	&8.450e-4	&1.489e-4	&2.632e-5	&4.653e-6	&8.225e-7	\\
&1.000	&0.997	&0.996	&0.996	&0.996	&0.996	&0.996\\
128&1.602e-2	&2.591e-3	&4.237e-4	&7.469e-5	&1.320e-5	&2.334e-6	&4.126e-7	\\
&1.000	&0.999	&0.999	&0.998	&0.998	&0.998	&0.998\\
256&8.011e-3	&1.296e-3	&2.119e-4	&3.740e-5	&6.611e-6	&1.169e-6	&2.066e-7	\\
&1.000	&1.000	&1.003	&0.999	&0.999	&0.999	&0.999\\
512&4.006e-3	&6.478e-4	&1.057e-4	&1.871e-5	&3.308e-6	&5.848e-7	&1.034e-7	\\
\hline
\strut&
\multicolumn{7}{l}{\rule{0pt}{8pt}%
\color{blue}${\mathcal S}^*=\emptyset$ in \eqref{tau_def}:~ Estimators
(odd rows) ~\&~
  Effectivity Indices (even rows)
  }
\\
\strut\rule{0pt}{11pt}%
%
 64&3.290e-2	&6.984e-3	&1.159e-3	&2.047e-4	&3.618e-5	&6.396e-6	&1.131e-6 \\	
&1.027	&1.350	&1.372	&1.374	&1.375	&1.375	&1.375  \\
128&1.646e-2	&2.695e-3	&5.802e-4	&1.023e-4	&1.808e-5	&3.196e-6	&5.651e-7 \\
&1.027	&1.040	&1.369	&1.370	&1.370	&1.370	&1.370	\\
256&8.230e-3	&1.335e-3	&2.908e-4	&5.115e-5	&9.042e-6	&1.598e-6	&2.826e-7 \\	
&1.027	&1.030	&1.372	&1.368	&1.368	&1.368	&1.368	\\
512&4.115e-3	&6.667e-4	&1.460e-4	&2.558e-5	&4.522e-6	&7.993e-7	&1.413e-7 \\	
&1.027	&1.029	&1.382	&1.367	&1.367	&1.367	&1.367  \\
\hline
\end{tabular}\hfill}
\end{table}
}

{
\begin{table}[t!]
\caption{
{\color{blue}Test problem with $C_u=1$,
 non-obtuse triangulation (see Fig.\,\ref{fig_partial_new}, left):%
 }
  error component \mbox{$\| u-u^I\|_{\Omega}$}  from \eqref{er_decomp} compared with
  the estimator component
  $\|f_h-f_h^I\|_{\Omega}$\,. 
}\label{table3_new}
\tabcolsep=6pt
\vspace{-0.2cm}
{\small\hfill
\begin{tabular}{r|rrrrrrr}
\hline
\strut\rule{0pt}{9pt}$ N$&
$\varepsilon=1$& $\varepsilon=2^{-5}$&$\varepsilon=2^{-10}$&
$\varepsilon=2^{-15}$& $\varepsilon=2^{-20}$&$\varepsilon=2^{-25}$&$\varepsilon=2^{-30}$\\
\hline
\strut&
\multicolumn{7}{l}{\rule{0pt}{8pt}%
{Errors 
(odd rows) ~\&~
  Computational Rates (even rows)}%
  }
\\
\strut\rule{0pt}{11pt}%
%
%
 64&2.242e-4	&6.120e-4	&6.496e-4	&6.567e-4	&6.571e-4	&6.571e-4	&6.571e-4	\\
&2.000	&2.004	&2.006	&2.006	&2.006	&2.006	&2.006	\\
128&5.607e-5	&1.525e-4	&1.617e-4	&1.635e-4	&1.636e-4	&1.636e-4	&1.636e-4	\\
&2.000	&2.002	&2.003	&2.003	&2.003	&2.003	&2.003	\\
256&1.402e-5	&3.807e-5	&4.036e-5	&4.077e-5	&4.079e-5	&4.080e-5	&4.080e-5	\\
&2.000	&2.001	&2.002	&2.002	&2.002	&2.002	&2.002	\\
512&3.505e-6	&9.510e-6	&1.008e-5	&1.018e-5	&1.019e-5	&1.019e-5	&1.019e-5  \\
\hline
\strut&
\multicolumn{7}{l}{\rule{0pt}{8pt}%
\color{blue}${\mathcal S}^*=\emptyset$ in \eqref{tau_def}:~
Estimators
(odd rows) ~\&~
  Effectivity Indices (even rows)
  }
\\
\strut\rule{0pt}{11pt}%
%

 64&2.661e-3	&5.762e-4	&6.484e-4	&6.567e-4	&6.571e-4	&6.571e-4	&6.571e-4	\\
&11.867	&0.941	&0.998	&1.000	&1.000	&1.000	&1.000	\\
128&6.671e-4	&1.435e-4	&1.614e-4	&1.634e-4	&1.636e-4	&1.636e-4	&1.636e-4	\\
&11.897	&0.941	&0.998	&1.000	&1.000	&1.000	&1.000	\\
256&1.670e-4	&3.579e-5	&4.029e-5	&4.077e-5	&4.079e-5	&4.080e-5	&4.080e-5	\\
&11.913	&0.940	&0.998	&1.000	&1.000	&1.000	&1.000	\\
512&4.178e-5	&8.938e-6	&1.006e-5	&1.018e-5	&1.019e-5	&1.019e-5	&1.019e-5   \\
&11.921	&0.940	&0.998	&1.000	&1.000	&1.000	&1.000	\\
\hline
\end{tabular}\hfill}
\end{table}
}

\noindent
This error decomposition is useful as  the two error components in \eqref{er_decomp}
exhibit somewhat different behaviour 
in our experiments, as${}\sim \eps^{1/2}N^{-1}$ and${}\sim N^{-2}$, respectively
(compare, for example, the upper parts of Tables\,\,\ref{table2_new} and\,\,\ref{table3_new}).
Furthermore, one can identify that ${\mathcal E}\bigr|_{f_h:=f_h^I}$
(obtained from \eqref{error_lemma} by replacing $f_h$ with its linear interpolant $f_h^I$)
essentially estimates the first error component in \eqref{er_decomp} (see Table~\ref{table2_new}), while 
$\|f_h-f_h^I\|_{\Omega}$
provides a reasonable estimator for the remaining error component  in \eqref{er_decomp}
(see Table~\ref{table3_new}).
Indeed, for the estimator components in Tables\,\,\ref{table2_new} and\,\,\ref{table3_new} (for the latter, when $\eps\le 2^{-5}$)
on the non-obtuse mesh, the effectivity indices
 do not exceed 1.382 (related results are given in Table~\ref{table1_new}).
 For the mesh with obtuse triangles, analogous results are presented in Tables\,\,\ref{table22} and\,\,\ref{table23}
 (with related results in Table\,\,\ref{table2_newnew}).


{
\begin{table}[tbp]
\caption{
{\color{blue} \color{blue}Test problem with $C_u=1$,
 mesh with obtuse triangles (see Fig.\,\ref{fig_partial_new}, right):}
  error component $\bigl\{\eps^2\|\nabla u_h-(\nabla u)^I\|^2_{2\,;\Omega}+\| u_h-u^I\|^2_{2\,;\Omega}\bigr\}^{1/2}$ from \eqref{er_decomp} compared with
  ${\mathcal E}\bigr|_{f_h:=f_h^I}$\,. 
}\label{table22}
\tabcolsep=6pt
\vspace{-0.2cm}
{\small\hfill
\begin{tabular}{r|rrrrrrr}
\hline
\strut\rule{0pt}{9pt}$ N$&
$\varepsilon=1$& $\varepsilon=2^{-5}$&$\varepsilon=2^{-10}$&
$\varepsilon=2^{-15}$& $\varepsilon=2^{-20}$&$\varepsilon=2^{-25}$&$\varepsilon=2^{-30}$\\
\hline
\strut&
\multicolumn{7}{l}{\rule{0pt}{8pt}%
{Errors 
(odd rows) ~\&~
  Computational Rates (even rows)}%
  }
\\
\strut\rule{0pt}{11pt}%
%
%
 64&3.334e-2	&5.274e-3	&8.452e-4	&1.489e-4	&2.632e-5	&4.653e-6	&8.225e-7	\\
&0.998	&0.997	&0.996	&0.996	&0.996	&0.996	&0.996	\\
128&1.669e-2	&2.642e-3	&4.236e-4	&7.469e-5	&1.320e-5	&2.334e-6	&4.126e-7	\\
&0.999	&0.999	&1.001	&0.998	&0.998	&0.998	&0.998	\\
256&8.352e-3	&1.322e-3	&2.117e-4	&3.740e-5	&6.611e-6	&1.169e-6	&2.066e-7	\\
&1.000	&1.000	&1.005	&0.999	&0.999	&0.999	&0.999	\\
512&4.177e-3	&6.611e-4	&1.055e-4	&1.871e-5	&3.308e-6	&5.848e-7	&1.034e-7   \\
\hline
\strut&
\multicolumn{7}{l}{\rule{0pt}{8pt}%
\color{blue}${\mathcal S}^*=\emptyset$ in \eqref{tau_def}:~ Estimators
(odd rows) ~\&~
  Effectivity Indices (even rows)
  }
\\
\strut\rule{0pt}{11pt}%
%
%
%
 64&3.534e-2	&8.142e-3	&1.516e-3	&2.681e-4	&4.739e-5	&8.378e-6	&1.481e-6	\\
&1.060	&1.544	&1.794	&1.800	&1.801	&1.801	&1.801	\\
128&1.771e-2	&3.552e-3	&8.840e-4	&1.586e-4	&2.803e-5	&4.955e-6	&8.759e-7	\\
&1.061	&1.344	&2.087	&2.123	&2.123	&2.123	&2.123	\\
256&8.864e-3	&1.770e-3	&5.936e-4	&1.174e-4	&2.076e-5	&3.670e-6	&6.487e-7	\\
&1.061	&1.339	&2.804	&3.139	&3.140	&3.140	&3.140	\\
512&4.434e-3	&8.842e-4	&3.927e-4	&1.057e-4	&1.870e-5	&3.305e-6	&5.843e-7	\\
&1.061	&1.337	&3.722	&5.648	&5.652	&5.652	&5.652	\\
\hline
\strut&
\multicolumn{7}{l}{\rule{0pt}{8pt}%
\color{blue}${\mathcal S}^*\neq\emptyset$ in \eqref{tau_def}:~
Estimators
(odd rows) ~\&~
  Effectivity Indices (even rows)
  }
\\
\strut\rule{0pt}{11pt}%
%
%
%
 64&3.534e-2	&8.137e-3	&1.372e-3	&2.422e-4	&4.282e-5	&7.569e-6	&1.338e-6	\\
&1.060	&1.543	&1.623	&1.627	&1.627	&1.627	&1.627	\\
128&1.771e-2	&3.550e-3	&6.886e-4	&1.215e-4	&2.148e-5	&3.797e-6	&6.712e-7	\\
&1.061	&1.343	&1.626	&1.627	&1.627	&1.627	&1.627	\\
256&8.864e-3	&1.769e-3	&3.453e-4	&6.086e-5	&1.076e-5	&1.902e-6	&3.362e-7	\\
&1.061	&1.338	&1.631	&1.627	&1.627	&1.627	&1.627	\\
512&4.434e-3	&8.839e-4	&1.732e-4	&3.046e-5	&5.385e-6	&9.519e-7	&1.683e-7	\\
&1.061	&1.337	&1.641	&1.628	&1.628	&1.628	&1.628	\\
\hline
\end{tabular}\hfill}
\end{table}
}

{
\begin{table}[tbp]
\caption{
{\color{blue} \color{blue}Test problem with $C_u=1$,
 mesh with obtuse triangles (see Fig.\,\ref{fig_partial_new}, right):}
  error component $\| u-u^I\|_{\Omega}$  from \eqref{er_decomp} compared with
  the estimator component
  $\|f_h-f_h^I\|_{\Omega}$\,. 
}\label{table23}
\tabcolsep=6pt
\vspace{-0.2cm}
{\small\hfill
\begin{tabular}{r|rrrrrrr}
\hline
\strut\rule{0pt}{9pt}$ N$&
$\varepsilon=1$& $\varepsilon=2^{-5}$&$\varepsilon=2^{-10}$&
$\varepsilon=2^{-15}$& $\varepsilon=2^{-20}$&$\varepsilon=2^{-25}$&$\varepsilon=2^{-30}$\\
\hline
\strut&
\multicolumn{7}{l}{\rule{0pt}{8pt}%
{Errors 
(odd rows) ~\&~
  Computational Rates (even rows)}%
  }
\\
\strut\rule{0pt}{11pt}%
%
%
 64&2.412e-4	&6.591e-4	&6.978e-4	&7.063e-4	&7.067e-4	&7.067e-4	&7.067e-4	\\
&1.998	&2.001	&2.001	&2.003	&2.003	&2.003	&2.003	\\
128&6.040e-5	&1.646e-4	&1.743e-4	&1.762e-4	&1.763e-4	&1.763e-4	&1.763e-4	\\
&1.999	&2.001	&2.001	&2.002	&2.002	&2.002	&2.002	\\
256&1.511e-5	&4.113e-5	&4.354e-5	&4.399e-5	&4.402e-5	&4.402e-5	&4.402e-5	\\
&1.999	&2.000	&2.001	&2.001	&2.001	&2.001	&2.001	\\
512&3.779e-6	&1.028e-5	&1.088e-5	&1.099e-5	&1.100e-5	&1.100e-5	&1.100e-5	\\
\hline
\strut&
\multicolumn{7}{l}{\rule{0pt}{8pt}%
\color{blue}${\mathcal S}^*=\emptyset$ in \eqref{tau_def}:~
Estimators
(odd rows) ~\&~
  Effectivity Indices (even rows)
  }
\\
\strut\rule{0pt}{11pt}%
%
%
 64&2.878e-3	&6.274e-4	&6.967e-4	&7.062e-4	&7.067e-4	&7.067e-4	&7.067e-4	\\
&11.931	&0.952	&0.998	&1.000	&1.000	&1.000	&1.000	\\
128&7.223e-4	&1.566e-4	&1.740e-4	&1.762e-4	&1.763e-4	&1.763e-4	&1.763e-4	\\
&11.958	&0.951	&0.998	&1.000	&1.000	&1.000	&1.000	\\
256&1.809e-4	&3.912e-5	&4.347e-5	&4.399e-5	&4.402e-5	&4.402e-5	&4.402e-5	\\
&11.972	&0.951	&0.998	&1.000	&1.000	&1.000	&1.000	\\
512&4.528e-5	&9.777e-6	&1.086e-5	&1.099e-5	&1.100e-5	&1.100e-5	&1.100e-5	\\
&11.979	&0.951	&0.998	&1.000	&1.000	&1.000	&1.000	\\
\hline
\strut&
\multicolumn{7}{l}{\rule{0pt}{8pt}%
\color{blue}${\mathcal S}^*\neq\emptyset$ in \eqref{tau_def}:~Estimators
(odd rows) ~\&~
  Effectivity Indices (even rows)
  }
\\
\strut\rule{0pt}{11pt}%
%
%
 64&2.878e-3	&6.274e-4	&6.967e-4	&7.062e-4	&7.067e-4	&7.067e-4	&7.067e-4	\\
&11.931	&0.952	&0.998	&1.000	&1.000	&1.000	&1.000	\\
128&7.223e-4	&1.566e-4	&1.740e-4	&1.762e-4	&1.763e-4	&1.763e-4	&1.763e-4	\\
&11.958	&0.951	&0.998	&1.000	&1.000	&1.000	&1.000	\\
256&1.809e-4	&3.912e-5	&4.347e-5	&4.399e-5	&4.402e-5	&4.402e-5	&4.402e-5	\\
&11.972	&0.951	&0.998	&1.000	&1.000	&1.000	&1.000	\\
512&4.528e-5	&9.777e-6	&1.086e-5	&1.099e-5	&1.100e-5	&1.100e-5	&1.100e-5	\\
&11.979	&0.951	&0.998	&1.000	&1.000	&1.000	&1.000	\\
\hline
\end{tabular}\hfill}
\end{table}
}


{\color{blue}\section{Justification of Remark~\ref{rem_tb_justified}\label{sec_app_B}}
To get a sharper version of the upper bound \eqref{upper_bound} for our estimator, with
$\min\{1,\,\eps h_z^{-1}\} \|\eps J_z\|^2_{\omega_z}$  replaced by a sharper term
$\sum_{S\in\gamma_z}\min\{\eps|S|,|\omega_z|\}\,(\eps{J}_S)^2$, we need to tweak the definition 
of $\btauz$ in \eqref{tauz_h_small}. 
To be more precise, whenever $h_z\lesssim\eps\lesssim H_z$ and $\mathring{h}_z\not\simeq H_z$, let
\begin{subequations}\label{tauz_h_small_optm0}
\begin{align}\label{tau_z_optm0}
\btauz&:=
\btauz'
+\left\{\begin{array}{cl}
\displaystyle
\sum_{i=1,m} {\textstyle\frac12} J_{S_i^-}\bigl(\btau_{z;T_{i-1}}^+ +\btau_{z;T_{i}}^-\bigr)
&\mbox{if~}z\in {\mathcal N}_{\rm ani}\backslash{\pt\Omega},
\\[0.5cm]\displaystyle
{\textstyle\frac12}\mathring{h}_z\mathring{\mathcal J}_z\,\bigl(|S_1^+|^{-1}\btau_{z;T_{1}}^+ +|S_n^-|^{-1}\btau_{z;T_{n}}^-\bigr)
&\mbox{if~$z$~satisfies~}{\mathcal A}1_{\rm mix},
\end{array}
\right.
\\\label{tau_z_pr_optm0}
\btauz' &:= \phi_z\bigl(\alpha_i \bnu_i+\beta_i d_i^{-1}\bmu_i\bigr)\quad\forall\ T_i\subset\omega_z,
\qquad\qquad\quad
\mathring{\mathcal J}_z:=\mathring{h}_z^{-1}\sum_{i=2}^{n}|S_i^-|J_{S_i^-}\,.
\end{align}
\end{subequations}
Here we use the notation of \eqref{tau_z} and \eqref{tau_z_case2_bounds} 
(see Fig.\,\ref{fig_tau_z_case1} and Fig.\,\ref{fig_local_kappa} (left)), assuming that $\btau_{z;T}^\pm\in L_2(\Omega)$ has support on $T$, while
 $m=m_z$, $n=n_z$ are defined in the proof of Lemma~\ref{lem_beta}.
 If $z\in{\mathcal N}^*_{\pt\Omega}$, or $z\in\pt\Omega$ satisfies ${\mathcal A}1_{\rm mix}$ and $n_z=1$, the definition \eqref{tauz_h_small} of $\btauz$ remains unchanged.
\medskip

\begin{lemma}
Let $\btauz$ be modified to \eqref{tauz_h_small_optm0}
whenever $h_z\lesssim\eps\lesssim H_z$ and $\mathring{h}_z\not\simeq H_z$.
Set $\{\alpha_i,\beta_i\}_{i=1}^{N_z}$  in  \eqref{tauz_h_small_optm0} to minimize  \eqref{optima}
subject to the constraint $[\btauz\cdot \bnu]
=\phi_z[\partial_\bnu u_h]$ on $\gamma_z$.
Then Theorem~\ref{theo_main_bounds}(i) is valid with
the term
$\min\{1,\,\eps h_z^{-1}\}^{1/2} \|\eps J_z\|_{\omega_z}$  in \eqref{tau_z_main} replaced by a sharper
$\bigl\{\sum_{S\in\gamma_z}\min\{\eps|S|,|\omega_z|\}\,(\eps{J}_S)^2\bigr\}^{1/2}$. 
\end{lemma}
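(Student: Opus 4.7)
The plan is to exploit the decomposition in \eqref{tauz_h_small_optm0}, which allocates the short-edge normal jumps to local bubble correctors supported on pairs of triangles adjacent to each short edge, leaving the equilibration problem for the ``bulk'' term $\btauz'$ only to reproduce the long-edge jumps. First I would verify the jump condition: by \eqref{tau_z_case2_bounds_a} together with the orientation identity $\bnu^+_{T_{i-1}}=-\bnu^-_{T_i}$, the pair $\tfrac12 J_{S_i^-}(\btau_{z;T_{i-1}}^++\btau_{z;T_i}^-)$ contributes exactly $\phi_z J_{S_i^-}$ to the jump on the shared short edge $S_i^-$ and vanishes on all other edges of $\gamma_z$. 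Hence the constraint $[\btauz\cdot\bnu]=\phi_z[\pt_\bnu u_h]$ on $\gamma_z$ is equivalent to \eqref{tau_n_hat} imposed on $\btauz'$ with the two short-edge right-hand sides set to zero, exactly the modification stated in Remark~\ref{rem_tb_justified}.

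Next I would address existence and the actual bound. The modified constraint system is consistent: summing its $N_z$ equations and using $\bnu_i\cdot(|S_i^+|\bnu_i^++|S_i^-|\bnu_i^-)=-2|T_i|d_i^{-1}$ together with \eqref{FEM} yields a single scalar compatibility condition on $\{\alpha_i\}$; unlike in Lemma~\ref{lem_tau_case1}, where $\alpha_i$ was prescribed by \eqref{F_z_T}, here $\alpha_i$ is a free optimization variable, so the condition can always be met, and a unique minimizer of the strictly convex coercive functional \eqref{optima} over the resulting affine feasible set exists. To bound the minimum I would evaluate \eqref{optima} at an explicit feasible pair $(\tilde\alpha,\tilde\beta)$ constructed analogously to Lemma~\ref{lem_beta}(b): take $\tilde\alpha_i$ equal to its natural \eqref{F_z_T} value except on one distinguished element, where a perturbation of size $O(\eps^{-2}H_z\max_{S\,\text{long}}|J_S|)$ absorbs the scalar balance; then solve the modified \eqref{tau_n_hat} recursively for $\tilde\beta_i$ via \eqref{beta_aux}, starting from $\tilde\beta=0$ at a short-edge index where the modified constraint is homogeneous. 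Under ${\mathcal A}1_{\rm ani}$ the recursion propagates along each long-edge wing of $\omega_z$ without picking up any short-edge contribution, giving $|\tilde\beta_i d_i^{-1}|\lesssim\max_S|J_S|+|\hat\sigma_z|$ with $\hat\sigma_z$ controlled as in \S\ref{sec_sigma}. Substituting into \eqref{tauz_small_h_prop_b} and using that every long edge $S$ satisfies $\min\{\eps|S|,|\omega_z|\}\sim|\omega_z|$ in the regime $h_z\lesssim\eps\lesssim H_z$ (because $\eps|S|\sim\eps H_z\gtrsim h_z H_z\sim|\omega_z|$), one obtains $\|\eps\btauz'\|_{\omega_z}^2\lesssim\sum_{S\in\gamma_z,\,\text{long}}\min\{\eps|S|,|\omega_z|\}(\eps J_S)^2$ plus the osc/$f_h^I$ terms already present in \eqref{tau_z_main}. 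For the bubble contribution, $\varsigma_{z;T_i}\sim 1$ under ${\mathcal A}1_{\rm ani}$ combined with (a suitably scaled version of) \eqref{tau_z_case2_bounds_b} yields $\|\eps\cdot\tfrac12 J_{S_i^-}(\btau_{z;T_{i-1}}^++\btau_{z;T_i}^-)\|_{\omega_z}^2\lesssim\eps|S_i^-|(\eps J_{S_i^-})^2=\min\{\eps|S_i^-|,|\omega_z|\}(\eps J_{S_i^-})^2$, matching the sharp short-edge weight, since $|S_i^-|\sim\mathring{h}_z$ and $\eps\mathring{h}_z\lesssim|\omega_z|$; an analogous computation controls the bubble's contribution to $g_z$.

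The main obstacle is the bubble construction itself in the subregime $h_T\lesssim\eps$, where Lemma~\ref{lem_case2} does not apply directly because its hypothesis $h_T\ge\sqrt{6}\eps$ fails; one must replace the parameter $\sqrt{6}\eps$ in the definition of $T^\pm$ by $\eps'\sim h_T$, check that $T^\pm\subset T$, and verify that the resulting bounds on $\|\btau_{z;T}^\pm\|_T$ and $\|\eps\,{\rm div}\btau_{z;T}^\pm\|_T$ still yield the sharp short-edge weight after multiplication by $(\tfrac12 J_{S_i^-})^2$, which ultimately hinges on the algebraic inequality $h_T\lesssim\eps$. A secondary difficulty is the rederivation of the analogue of \eqref{beta_0_m} with both short-edge jumps removed: the wing-to-wing coupling of $\tilde\beta$ must be absorbed into the distinguished $\tilde\alpha$ perturbation, and one has to verify that this perturbation does not inflate $\|\eps\btauz'\|^2$ beyond $|\omega_z|(\eps\max_S|J_S|)^2$. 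The mixed case ${\mathcal A}1_{\rm mix}$, where the correction reads $\tfrac12\mathring{h}_z\mathring{\mathcal J}_z(|S_1^+|^{-1}\btau_{z;T_1}^++|S_n^-|^{-1}\btau_{z;T_n}^-)$ instead, requires a separate but analogous verification, exploiting the cancellation $\sum_{i=2}^{n}|S_i^-|J_{S_i^-}=\mathring{h}_z\mathring{\mathcal J}_z$ built into the definition of $\mathring{\mathcal J}_z$ in \eqref{tau_z_pr_optm0}.
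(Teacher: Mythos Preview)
Your overall architecture matches the paper's: split off local correctors at the short edges so that the equilibration problem for $\btauz'$ sees only the long-edge jumps, then bound the minimizer of \eqref{optima} by an explicit feasible pair.  However, the ``main obstacle'' you identify is a red herring, and there is a genuine gap in your treatment of case (b) (interior anisotropic node under ${\mathcal A}1_{\rm ani}$).

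First, the red herring.  The hypothesis $h_T\ge\sqrt 6\,\eps$ in Lemma~\ref{lem_case2} is sufficient but not necessary: the construction of $\btau_{z;T}^+$ only requires that the \emph{other} edge $S_T^-$ at $z$ (along which the support triangle $T^+$ extends) have length $\ge\sqrt 6\,\eps$.  In the present setting that edge is long, $|S_T^-|\sim H_z\gtrsim\eps$, so both the construction and the bound \eqref{tau_z_case2_bounds_b} apply verbatim (after adjusting the constant as in Remark~\ref{rem_sqrt6}).  No modification with $\eps'\sim h_T$ is needed.

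The real gap is in your treatment of $\btauz'$.  After the short-edge jumps are removed from the right-hand side of \eqref{tau_n_hat}, there are \emph{two} independent scalar conditions on $\{\alpha_i\}$: (i) global consistency of the $\beta$-system (sum of all $N_z$ equations), and (ii) vanishing of the one-wing sum, i.e.\ the modified \eqref{beta_0_m}, which is what allows $\beta_0,\beta_m$ to be chosen of size $O(h_z\max_j|\alpha_j|)$.  A single perturbation can enforce (i) but not both.  With only (i), the recursion \eqref{beta_aux} still fails at $i=0,m$ (where $d_i\sim h_z\ll|S_i^-|\sim H_z$), and one is forced back to $|\beta_m d_m^{-1}|\sim|\hat\sigma_z|$.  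Since $\hat\sigma_z$ is controlled only by \eqref{hat_sigma}, which involves the \emph{full} $J_z$ (including $\mathring J_z$), this gives $\|\eps\btauz'\|_{\omega_z}^2\lesssim|\omega_z|(\eps J_z)^2$ --- the old, non-sharp bound.  Your claim that this is $\lesssim\sum_{S\,\mathrm{long}}|\omega_z|(\eps J_S)^2$ is therefore incorrect.  The paper's remedy is to use two distinct constants $A_z^+,A_z^-$, one per wing: $A_z^+$ is chosen so that the modified \eqref{beta_0_m} has zero right-hand side, and $A_z^-$ is then fixed by global consistency.  This yields $|A_z^\pm|\lesssim\eps H_z^{-1}|\eps J_z|$ (not $\eps^{-2}H_z\widehat J_z$ as you write; the perturbation necessarily involves the short-edge jumps), and hence $\|A_z\|_{\omega_z}^2\lesssim\eps h_z(\eps J_z)^2$, which after splitting $J_z\le\mathring J_z+\widehat J_z$ and using $\eps h_z\sim\eps\mathring h_z\le|\omega_z|$ gives precisely the sharp weights.
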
%
\medskip

\begin{proof}
For the case $h_z\gtrsim \eps$, the sharper version of \eqref{tau_z_main} follows from the first relation in \eqref{theo_bound}.
Otherwise,
if $h_z\le H_z\lesssim  \eps$, this follows from $ |\omega_z|\lesssim \eps|S|$.

For the remaining case $h_z\lesssim\eps\lesssim H_z$,
using the notation $\mathring{J}_z$ and $\widehat J_z$ of \eqref{mathring_J},
we need to show \eqref{tau_z_main} with $\min\{1,\,\eps h_z^{-1}\}^{1/2} \|\eps J_z\|_{\omega_z}$
replaced by $\{\eps \mathring{h}_z\}^{1/2}\,\eps\mathring{J}_z+\|\eps \widehat J_z\|_{\omega_z}$.
For $\btauz-\btauz'$, we employ Lemma~\ref{lem_case2};
in particular, \eqref{tau_z_case2_bounds_b} implies
$\|\eps^2{\rm div}(\btauz-\btauz')\|^2_{\omega_z}\sim\|\eps(\btauz-\btauz')\|^2_{\omega_z}
\sim\eps \mathring{h}_z(\eps\mathring{J}_z)^2$.
Recalling that
$g_z=\eps^2{\rm div}\btauz+\theta_{T;z} F_{T;z}$ for $h_z\lesssim \eps$, it suffices to prove the desired version of \eqref{tau_z_main}
for $\|\eps\btauz'\|_{\omega_z}+\|\eps^2{\rm div}\btauz'+\theta_{T;z} F_{T;z}\|_{\omega_z}$.
In fact, it suffices for the latter to be established for one specific set $\{\alpha_i, \beta_i\}_{i=1}^{N_z}$
subject to $[\btauz\cdot \bnu]
=\phi_z[\partial_\bnu u_h]$ on $\gamma_z$.
Here the constraint is equivalent to a version of \eqref{tau_n_hat} taking into account the possibly non-trivial jumps $[(\btauz-\btauz')\cdot \bnu]$ across $\gamma_z$.

As in the proof of Lemma~\ref{lem_beta}, consider three cases (a),\,(b) and (c).

(a) Suppose that $z$ satisfies ${\mathcal A}1_{\rm mix}$ with $n_z\ge 2$.
Now, let
\beq\label{A_z_def}
\alpha_i:=\eps^{-2}d_i\theta_{T_i;z} (\widetilde F_{T_i;z}-A_z)\quad
:
\quad
A_z\sum_{i=1}^N 2\eps^{-2}\theta_{T_i;z}|T_i| :=-\mathring{h}_z\mathring{\mathcal J}_z.
\eeq
Now, the constraint $[\btauz\cdot \bnu]
=\phi_z[\partial_\bnu u_h]$ on $\gamma_z$ yields a version of \eqref{tau_n_hat}, in which 
$\frac12\mathring{h}_z\mathring{\mathcal J}_z(\mathbbm{1}_{i=2}+\mathbbm{1}_{i=n})$
is subtracted from the right-hand side. 
Note that the described version of \eqref{tau_n_hat}
gives
 a consistent system for $\{\beta_i\}_{i=1}^N$ with infinitely many solutions, which is shown as in the proof of Lemma~\ref{lem_tau_case1}.
In particular, if $z\not\in\pt\Omega$, the consistency of this system can be shown by adding all $N$ equations in this system 
(and also using $\bnu_i\cdot\bigl(|S_i^+|\bnu^+_i +|S_{i}^-|\bnu^-_{i}\bigr)+2|T_i|d_i^{-1}=0$), which yields the second relation in \eqref{A_z_def}.
Note that the latter uniquely defines $A_z$ and implies $|A_z|\lesssim \eps^2 |\omega_z|^{-1}\mathring{h}_z\mathring{J}_z$.

Next, set $\beta_1:=0$, and imitate the proof of Lemma~\ref{lem_beta}.
Now an application of $\sum_{i=2}^n$ to the current version of \eqref{tau_n_hat} yields
$|\beta_n d_n^{-1}|\sim |\beta_n \mathring{h}_z^{-1}|\lesssim \max_{ j=1,\ldots, n}|\alpha_j|$.
Consequently, a version of \eqref{beta_aux} implies
$|\beta_i d_i^{-1}|\lesssim \mathring{J}_z+\max_{ j=1,\ldots, i}|\alpha_j|$ for $i=2,\ldots, n-1$, and
$|\beta_i d_i^{-1}|\lesssim \widehat J_z+\max_{ j=1,\ldots, i}|\alpha_j|$ for $i=n+1,\ldots, N$.
Note that for $i=2,\ldots, n-1$, one has $|T_i|\sim \mathring{h}_z^2\lesssim \eps\mathring{h}_z$ so $\|\eps \mathring{J}_z\|_{T_i}\lesssim \{\eps\mathring{h}_z\}^{1/2}\,\eps \mathring{J}_z$.
Note also that 
$\|\eps \widehat{J}_z\|_{T_i}\le\|\eps \widehat{J}_z\|_{\omega_z}$
for $i=n+1,\ldots, N$.
Comparing these observations with the desired version of \eqref{tau_z_main} implies that to bound $\|\eps\btauz'\|_{\omega_z}$,
 it remains to estimate $\max_{i=1,\ldots, N}\|\eps\alpha_i\|_{\omega_z}$.

For the latter, recall that it was shown in the proof of Lemma~\ref{lem_tau_case1} that if one sets $A_z:=0$ in the current definition of $\alpha_i$, then
$\max\|\eps\alpha_i\|_{\omega_z}\lesssim \|h_z\eps^{-1}f_h^I\|_{\omega_z}$.
For the remaining component $\eps^{-2}d_i\theta_{T_i;z} A_z$ of $\alpha_i$,
recall that $d_i\theta_{T_i;z}\sim h_{T_i}\lesssim h_z\lesssim \eps$, so $\eps|\eps^{-2}d_i\theta_{T_i;z} A_z|\lesssim |A_z|$.
On the other hand,
 $|\eps^2{\rm div}\btauz'+\theta_{T;z} F_{T;z}|\le |A_z|$ (it is a current version of \eqref{tauz_small_h_prop_a}).
 Hence, to complete the estimation of $\|\eps\btauz'\|_{\omega_z}$, as well as to 
 bound
$\|\eps^2{\rm div}\btauz'+\theta_{T;z} F_{T;z}\|_{\omega_z}$, we proceed to the bound
$\|A_z\|_{\omega_z}\lesssim |\omega_z|^{1/2}(\eps^2 |\omega_z|^{-1}\mathring{h}_z\mathring{J}_z)
\lesssim\{\eps\mathring{h}_z\}^{1/2}  \eps\mathring{J}_z$
(in view of $\{\eps\mathring{h}_z\}^{1/2}\lesssim |\omega_z|^{1/2}$).
This observation completes the proof of the desired version of \eqref{tau_z_main} in case (a).

(b) Next, consider $z\in {\mathcal N}_{\rm ani}\backslash\pt\Omega$ that satisfies~${\mathcal A}1_{\rm ani}$ (and so not~${\mathcal A}1_{\rm mix}$).
Let
\beq
\label{A_iz_def}
\alpha_i:=\eps^{-2}d_i\theta_{T_i;z} (\widetilde F_{T_i;z}-a_i)\quad
:
\quad
\sum_{i=1}^N 2\eps^{-2}\theta_{T_i;z}|T_i|a_i =-\sum_{i= 1,m}|S_{i}^-|J_{S_i^-}\,.
\eeq
Now, the constraint $[\btauz\cdot \bnu]
=\phi_z[\partial_\bnu u_h]$ on $\gamma_z$ yields a version of \eqref{tau_n_hat}, in which
$\mathbbm{1}_{i\in\{ 1,m\}} |S_{i}^-|J_{S_{i}^-}$
is subtracted from the right-hand side. 
Note that the described version of \eqref{tau_n_hat}
gives
 a consistent system for $\{\beta_i\}_{i=1}^N$ with infinitely many solutions, which is shown as in the proof of Lemma~\ref{lem_tau_case1}.
To be more precise, adding all $N$ equations in this system
(and also using $\bnu_i\cdot\bigl(|S_i^+|\bnu^+_i +|S_{i}^-|\bnu^-_{i}\bigr)+2|T_i|d_i^{-1}=0$) yields the second relation in \eqref{A_iz_def}.

Set
$$
a_i:=\left\{\begin{array}{cl}
A_z^+& \mbox{for}\;\;i=1,\ldots,m,\\
A_z^-& \mbox{for}\;\;i=m+1,\ldots,N,
\end{array}\right.
\qquad
A_z^+\sum_{i=1}^m 2\eps^{-2}\theta_{T_i;z}|T_i|:=\widetilde\sigma_z\,,
$$
where $\widetilde\sigma_z$ is from \eqref{sigma_def}, while $A_z^-$ is now uniquely defined by the second relation in \eqref{A_iz_def}.
Then, 
$|A_z^+|\lesssim \eps H_z^{-1}|\eps h_z^{-1}\widetilde\sigma_z|$
 and $|A_z^-|\lesssim |A_z^+|+\eps H_z^{-1}|\eps {J}_z|$.
 Combining these two observations with
 \eqref{hat_sigma} (which was obtained under assumption ${\mathcal A}1_{\rm ani}$), one gets
\beq\label{app_opt_Az_bound}
A_z:=\max_{i=1,\ldots,N}|a_i|\lesssim \eps H_z^{-1}|\eps J_z|+h_z\eps^{-1}|\widebar F_z|
+\sum_{T\subset\omega_z}\lambda_T\,{\rm osc}(f_h^I;T)
\eeq
(where we also used $\eps H_z^{-1}\lesssim 1$ for the final two terms).

With these definitions, 
one gets a version of Lemma~\ref{lem_tau_case1}:
\begin{subequations}\label{tauz_small_h_prop_opt}
\begin{align}\label{tauz_small_h_prop_a_opt}
|\eps^2{\rm div}\btauz'+\theta_{T;z}\widetilde F_{T;z}|&\le A_z\qquad \forall\ T\subset\omega_z,
\\\label{tauz_small_h_prop_b_opt}
\|\eps \btauz'\|_{\omega_z}&\lesssim
\|
h_z\eps^{-1}(|f^I_h|+A_z)
\|_{\omega_z}+\eps\,\Bigl\{\sum_{i=1}^{N_z} \beta^2_i d_i^{-2} |T_i|\Bigr\}^{1/2}.
\end{align}
\end{subequations}
Furthermore, the current version of \eqref{tau_n_hat} implies a version of
\eqref{beta_0_m} with $0$ in the right-hand side:
$\beta_{0}-\beta_{m}
+\alpha_{0}\bnu_{0}\cdot|S_{0}^+|\bnu^+_{0}
-\alpha_{m}\bnu_{m}\cdot|S_{m}^+|\bnu^+_{m}=0$.
So for $i=0,m$, set
$\beta_i:=-\alpha_{i}\bnu_{i}\cdot|S_{i}^+|\bnu^+_{i}$.
For the remaining $\{\beta_i\}$, a version of
\eqref{beta_aux} in which now $J_z$ is replaced by $\widehat J_z$, yields
$$
|\beta_id_i^{-1}|\lesssim \max_{j=1,\ldots, N}|\alpha_j|+|\widehat J_z|,
$$
(this is also true for $i=0,m$).
Now, imitating the proof of Lemma~\ref{lem_tau_case1},
the second term in the right-hand side of \eqref{tauz_small_h_prop_b_opt} is bounded by the first term${}+\|\eps\widehat J_z\|_{\omega_z}$.
As $h_z\eps^{-1}\lesssim 1$, 
so $\|h_z\eps^{-1}A_z\|_{\omega_z}\lesssim \|A_z\|_{\omega_z}$, 
so it now remains to estimate $\|A_z\|_{\omega_z}$.
In fact, the terms in the right-hand side of the bound \eqref{app_opt_Az_bound} for $A_z$ were estimated in the proof of Lemma~\ref{lem_beta}, except for the component
$\eps H_z^{-1}|\eps J_z|$. For the latter,
$\|\eps H_z^{-1}(\eps J_z)\|_{\omega_z}=|\omega_z|^{1/2}\eps H_z^{-1}|\eps J_z|\lesssim \{\eps h_z\}^{1/2}|\eps J_z|$
(in view of $ \{|\omega_z|\eps H_z^{-1}\}^{1/2}\sim \{\eps h_z\}^{1/2}$ combined with $\eps H_z^{-1}\lesssim 1$).
Combining this with $h_z\sim \mathring{h}_z$ (as $z\in{\mathcal N}_{\rm ani}$), and $\eps h_z\lesssim |\omega_z|$, one gets
$\|\eps H_z^{-1}(\eps J_z)\|_{\omega_z}\lesssim \{\eps \mathring{h}_z\}^{1/2}\,\eps\mathring{J}_z+\|\eps \widehat J_z\|_{\omega_z}$.
This completes the proof for case (b).

(c) If either $z\in{\mathcal N}^*_{\pt\Omega}$ satisfies~${\mathcal A}1_{\rm ani}$ but not~${\mathcal A}1_{\rm mix}$,
or $z\in\pt\Omega$ satisfies ${\mathcal A}1_{\rm mix}$ and $n_z=1$,
then 
$\mathring{\gamma}_z=\emptyset$, so
$J_z$ does not involve $\mathring{J}_z$ (i.e. $J_z=\widehat J_z$), so the original version of \eqref{tau_z_main} is equivalent to the desired version of this bound.
\end{proof}}

\end{document}